\documentclass{amsart}

\usepackage{csquotes}

\usepackage[hidelinks]{hyperref}

\usepackage[margin=1.0in]{geometry}
\usepackage{amsmath}
\usepackage{amssymb}
\usepackage{amsthm}
\usepackage{enumerate}
\usepackage{xcolor}
\usepackage{cleveref}
\usepackage{tikz}
\usepackage[T1]{fontenc}

\newtheorem{thm}{Theorem}[section]
\newtheorem{lem}[thm]{Lemma}

\theoremstyle{definition}
\newtheorem{dfn}[thm]{Definition}

\newtheorem*{prb*}{Problem}
\newtheorem*{thm*}{Theorem}
\newtheorem{rmk}[thm]{Remark}
\newtheorem{question}[thm]{Question}

\providecommand{\e}{\varepsilon}
\providecommand{\emp}{\varnothing}

\providecommand{\x}{\xi}

\providecommand{\R}{\mathbb{R}}

\providecommand{\Z}{\mathbb{Z}}
\providecommand{\N}{\mathbb{N}}

\newcommand{\ii}{{\bf i}}
\newcommand{\jj}{{\bf j}}

\newcommand{\cC}{\mathcal{C}}
\newcommand{\cD}{\mathcal{D}}

\newcommand{\cT}{\mathcal{T}}
\newcommand{\cV}{\mathcal{V}}

\newcommand{\LINE}{\mathsf{LINE}}

\newcommand{\VB}{\mathsf{VB}}
\newcommand{\IVB}{\mathsf{IterVB}}
\newcommand{\DIVIDE}{\mathsf{DIVIDE}}
\newcommand{\ROTATE}{\mathsf{ROTATE}}

\newcommand{\nbhd}[2]{\operatorname{nbhd}_{#2}(#1)}
\newcommand{\dom}{\mathrm{dom }\thinspace }
\newcommand{\intdom}{\mathrm{int\thinspace dom }\thinspace }

\newcommand{\sma}{\mathrm{small}}
\newcommand{\cov}{\mathrm{cover}}

\newcommand{\newA}{W}

\numberwithin{figure}{section}
\numberwithin{equation}{section}

\usepackage{enumitem}
\setlist[enumerate]{topsep=1ex,itemsep=0ex,partopsep=1ex,parsep=1ex}

\title[Prescribed projections and efficient coverings]{\centering{Prescribed projections and efficient coverings\\ by curves in the plane}}

%\author{Alan Chang\thanks{The first author is supported by NSF Grant DMS-2247233.}, Alex McDonald, and Krystal Taylor\thanks{The third author is supported in part by the Simons Foundation Grant 523555.}}

\author{Alan Chang}
\address{Department of Mathematics, Washington University in St. Louis, St. Louis, MO}
\email{alanchang@math.wustl.edu}

\author[Alex McDonald]{Alex McDonald }
\address{Department of Mathematics, Kennesaw State University, Marietta, GA}
\email{amcdon79@kennesaw.edu}

\author[Krystal Taylor]{Krystal Taylor}
\address{Department of Mathematics, The Ohio State University, Columbus, OH}
\email{taylor.2952@osu.edu}

\thanks{The first author is supported by the National Science Foundation under Grant No.~DMS-2247233. During part of this project, the second author was in residence at the Institute for Computational and Experimental Research in Mathematics in Providence, RI, during the Harmonic Analysis and Convexity program, which was supported by the NSF Grant No.~DMS-1929284. The third author is supported in part by the Simons Foundation Grant MPS-TSM-00007969.}

\begin{document}
\maketitle
\begin{abstract}
Measure zero sets containing intricate structure are foundational in geometric measure theory. These includes Besicovitch sets in the plane (also known as Kakeya sets), which are measure zero sets containing a line in every direction. Closely related to the concept of Besicovitch sets is Davies' efficient covering theorem, which states that an arbitrary measurable set $\newA$ in the plane can be covered by full lines so that the the union of the lines has the same measure as $\newA$. This result has an interesting dual formulation in the form of a prescribed projection theorem. 
In this paper, we develop each of these results in a nonlinear setting and consider some applications. 
In particular, given a measurable set $\newA$ and a curve $\Gamma=\{(t,f(t)): t\in [a,b]\}$, where $f$ is $C^1$ with strictly monotone derivative, 
we show that $\newA$ can be covered by translations of $\Gamma$ in such a way that the union of the translated curves has the same measure as $\newA$. 
This is achieved by proving an equivalent prescribed generalized projection result, which relies on 
a Venetian blind construction.  
\end{abstract}

\section{Introduction}
R. O. Davies \cite[Theorem 1]{Davies52} proved the following remarkable covering result.
\begin{displayquote}
\textit{Given any plane set $\newA$ of finite Lebesgue measure, there exists a set $L$ consisting of a union of straight lines such that $\newA \subset L$ and  $|\newA| =  |L|$.
}
\end{displayquote}
We refer to this result as {\it Davies' efficient covering} theorem.  
Here and throughout, the notation $|E|$ will be used to denote Lebesgue measure of $E$ in the appropriate dimension.
An extension of Davies where Lebesgue measure is replaced by $\sigma$-finite Borel measures was proved by Cs\"ornyei \cite{C01}, but we focus on Lebesgue measure.
\\

%

%DAVIES IS CONNECTED TO KAKEYA + KAKEYA CONJECTURE
Davies' result fits into a broader context of the study of linear accessibility and the related Kakeya conjecture. 
A {\it Kakeya set}, also known as a {\it Besicovitch set}, is a Borel subset of $\R^n$ of measure zero that contains a unit line segment in every direction.  
Among the most important conjectures in the intersection of harmonic analysis and geometric measure theory, the Kakeya conjecture asserts that every Besicovitch set in $\R^n$ has Hausdorff dimension $n$.  This conjecture was proved by Davies \cite{DaviesKakeya} when $n=2$ and, in a very recent breakthrough, by Wang and Zahl \cite{WZ25} when $n=3$ .  It is open for $n\geq 4$  \cite[Conjecture 11.4]{Mat15}. 
%The Kakeya conjecture is connected to a number of questions in other fields, and Kakeya sets have provided important counterexamples in analysis. 
%For instance,  Besicovitch sets underlie Fefferman's proof \cite{Feff71} that the disc multiplier is not $L^p$ bounded if $p\neq 2$.
%The Kakeya conjecture is connected to the restriction conjecture, and it is known that solving the restriction conjecture would imply the Kakeya conjecture \cite{St93}, \cite[Chapter 22]{Mat15}. 
%Further, the Kakeya conjecture has connections to combinatorics due to its finite field analogue \cite{Dvir}.  
It continues to be of current interest, \cite{HRZ, KZ19, RenWang, WangZahl, Wisewell05, Zahl2021New, Z23} and has nonlinear analogues \cite{CC, OF23, Keleti17}. 
Various methods of constructing Kakeya sets are discussed in \cite[Section 11.6]{Mat15}, \cite[Chapter 7]{Fal86_book}, and  \cite[Chapter 10]{W03}. 
\\

%NIKODYM SETS + NIKODYM CONJECTURE
%
A related thin set is that of a {\it Nikodym set}.
A Borel set $N\subset \R^n$ is a Nikodym set if its Lebesgue measure is zero and for every $x\in \R^n$, there is a line $\ell$ through $x$ so that $N\cap \ell$ contains a unit line segment. 
So, while a Besicovitch set is a set of measure zero containing a line segment in every direction, 
a Nikodym set is a set of measure zero containing a positive proportion of a line through each point.  
Davies simplified and improved Nikodym's original construction by showing that it is possible to construct a full measure set in the plane so that each point in the set is accessible by $2^{\aleph_0}$-many lines \cite{Davies52}. See also \cite{Mat15}, and see \cite[Section 5]{CCHK} for a different construction via Baire category. Curved Nikodym sets are studied in \cite{CC, cdk}.
%Analogous to the Kakeya conjecture, there is the 
The Nikodym conjecture, \cite[Conjecture 11.10]{Mat15}, %which 
asserts that 
every Nikodym set in $\R^n$ has Hausdorff dimension $n$. 
Every Nikodym set can be transformed into a Kakeya set of the same dimension by a projective transformation \cite[Theorem 11.11]{Mat15}; this means that the Kakeya conjecture implies the Nikodym conjecture. \\

Davies' original proof of his efficient covering theorem as well as some constructions of Kakeya and Nikodym sets rely on so-called Perron trees, where one starts with a triangle, partitions the base to form smaller triangles, and rearranges the pieces; see \cite{Davies52} and \cite[Theorems 7.2, 7.3]{Fal86_book}.  An alternative way to prove Davies' efficient covering theorem is via projections.  
The following is a dual formulation of Davies' theorem \cite[Theorem 7.11]{Fal86_book}:
%which is referred to as the \textit{prescribed projection} theorem:
\begin{displayquote}
\textit{(Prescribed Projection Theorem:)
Let $\newA_\theta$ be a subset of $\R$ for each $\theta \in [0,\pi)$, 
and suppose that the set  $\{(\theta, y) : y\in \newA_\theta \}$ is plane Lebesgue measurable. 
Then, there exists a set $E\subset \R^2$ such that for almost all $\theta$ both of the following hold: 
\begin{enumerate}
\item[(i)]
$\newA_\theta \subset \pi_\theta (E)$;
\item[(ii)]
$|\newA_\theta|  = |\pi_\theta (E)| $, 
\end{enumerate}
where
$\pi_\theta: \R^2 \rightarrow \R$
 is defined by $\pi_\theta(x) = x\cdot e_\theta$, and $e_\theta$ is the unit vector in direction $\theta$. 
}
\end{displayquote}

The equivalence of Davies' efficient covering theorem and the prescribed projection is demonstrated below in Appendix \ref{duality} and uses the duality between points and lines. 
In fact, much inspiration for this paper comes from Falconer's \cite{Fal86_paper}, where he proves a higher dimensional analogue of the prescribed projection theorem; when $d=3$, this is known as the digital sundial \cite{Fal03}. 
\\

In this paper, we investigate analogues of Davies' efficient covering theorem and the prescribed projection theorem in the setting where lines are replaced by 
translates of a fixed curve.  
In particular, we consider the following question: 
\textit{
for what curves $\Gamma$ does the family of translates $\{x+\Gamma: x\in \R^2\}$ admit efficient coverings of sets in the plane?  
}

Our main theorem is as follows. 
\begin{thm}[Efficient covering by curves]
\label{maintheorem}
Let $\newA\subset \R^2$ be measurable.
Let $f:[a,b]\to\R$ be a $C^1$ function with a strictly monotone derivative, and let $\Gamma \subset \R^2$ be the graph of $f$.  Then there exists a set $E\subset \R^2$ such that 
\begin{enumerate}[label=(\alph*)]
\item $E+\Gamma\supset \newA$,
\item $|(E+\Gamma)\setminus \newA|=0$.
\end{enumerate}
\end{thm}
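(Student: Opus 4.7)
\textbf{The plan} is to establish Theorem~\ref{maintheorem} through a dual \emph{prescribed generalized projection} theorem, paralleling the classical equivalence between Davies' covering theorem and the prescribed projection theorem discussed in Appendix~\ref{duality}. The duality is essentially pointwise: a point $z \in \R^2$ lies in $E + \Gamma$ if and only if the translated \emph{dual curve} $z - \Gamma$ meets $E$, so the covering problem is equivalent to finding, for given measurable $\newA \subseteq \R^2$, a set $E$ that meets $z - \Gamma$ for every $z \in \newA$ and, up to a null set, for no $z$ outside $\newA$. Equivalently, the 2-parameter family of translates $\{(u,v)+\Gamma\}$ in the primal plane is exchanged with the 2-parameter family of dual curves $\{z - \Gamma\}$ in the parameter plane, and covering by translates is exchanged with a pointwise incidence condition on $E$.

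\textbf{Setting up the generalized projections.} Since $f'$ is strictly monotone and continuous, the map $t \mapsto f'(t)$ is a homeomorphism from $[a,b]$ onto $[\alpha,\beta] := f'([a,b])$. Each slope $m \in [\alpha,\beta]$ selects a unique point of $\Gamma$ and hence a unique tangent direction, which I would use to define a one-parameter family of ``generalized projections'' $\pi_m : \R^2 \to \R$ whose fibers are parallel to the tangent of $\Gamma$ at the point with slope $m$. The covering theorem then reduces to a prescribed generalized projection statement: given measurable target sets $\newA_m \subseteq \R$ for $m \in [\alpha,\beta]$ (satisfying appropriate joint measurability of $\{(m,y):y\in \newA_m\}$), produce $E \subseteq \R^2$ with $\pi_m(E) \supseteq \newA_m$ and $|\pi_m(E) \setminus \newA_m| = 0$ for almost every $m$.

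\textbf{Venetian blind construction.} To prove this projection statement, I would first approximate the data $(\newA_m)$ by a finite union of rectangles in the $(m,y)$-plane and build $E$ as a limit of finite constructions. At each scale, rectangles are replaced by a collection of thin ``curved slabs'' --- short arcs of $\Gamma$ thickened perpendicularly --- arranged so that for each $m$ the union of the slabs realizes exactly the prescribed $\pi_m$-projection, while the slabs themselves overlap heavily in the ambient plane, forcing their total area to be small. Iterating at successively finer scales and passing to a limit set (through the standard intersection/compactness arguments familiar from Kakeya-type constructions) should yield the required $E$, with the measure of $(E+\Gamma)\setminus \newA$ going to zero along the iteration.

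\textbf{Main obstacle.} The principal difficulty is the Venetian blind step itself in the curved setting. In the classical straight-line case, the needed rearrangement of parallelograms is an affine operation and its efficiency is a matter of elementary planar geometry. For translates of $\Gamma$, the analogous ``slabs'' are genuinely curved and cannot be rigidly translated so that they overlap exactly; instead, one must exploit the varying tangent directions, and here the strict monotonicity of $f'$ is essential because it forces distinct points of $\Gamma$ to have distinct tangent directions, thereby supplying the angular spread that any Venetian-blind argument requires. Quantifying the residual measure $|(E + \Gamma) \setminus \newA|$ through the curved rearrangement, and verifying that the iterative limit is measurable with the correct generalized projections, is where the bulk of the technical work will lie.
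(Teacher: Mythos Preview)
Your first paragraph is correct and is exactly the paper's starting point: $z\in E+\Gamma$ iff $(z-\Gamma)\cap E\neq\varnothing$, and by Fubini the covering theorem is equivalent to a prescribed-projection statement for the maps $\Phi_\alpha$ whose fibers are the \emph{curves} $z-\Gamma$ (the paper's Theorem~\ref{prescribedprojection}, parametrized by the horizontal coordinate $\alpha$ of the vertical slice). Your second paragraph, however, takes a wrong turn. The maps $\pi_m$ you define have \emph{straight-line} fibers, so they are ordinary linear projections indexed by slope, and the prescribed-projection theorem for those is literally the classical Davies/Falconer result. The covering problem for translates of $\Gamma$ does \emph{not} reduce to that: knowing $\pi_m(E)$ for every $m$ does not determine which curves $z-\Gamma$ meet $E$, so the equivalence you assert fails. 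If it held, Theorem~\ref{maintheorem} would be an immediate corollary of Davies and the entire construction of Sections~\ref{VenetianBlindSection}--\ref{ProofSection} would be unnecessary; compare also Remark~\ref{general_classes}, where the paper singles out the few families for which such a reduction genuinely works.

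Because the correct projections $\Phi_\alpha$ have curved fibers, a difficulty appears that is absent from your outline and from the linear case: $\Phi_\alpha$ is only defined on a vertical strip of width $b-a$, so a fiber meeting a segment $L$ may terminate before reaching the blinds built on $L$ (Figure~\ref{E}). The paper's Venetian blinds are built from \emph{straight} line segments, not curved slabs; the curved projections are compared locally to linear ones via the tangent direction $\theta_\alpha(x)$, and the domain obstruction is controlled by confining each step to small neighborhoods and imposing the hypothesis $L\subset\intdom\Phi_\alpha$ throughout (Lemmas~\ref{lemma:rotation-cover-nonlinear}--\ref{lemma:ivb-small-nonlinear-restricted-domain}, then the polygon approximation and key Lemma~\ref{lemma: keylemma}). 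Your curved-slab idea is not unreasonable in spirit, but straight segments already suffice once one works locally, and the real work---managing the domain issue while simultaneously achieving covering on $A_\cov$ and smallness on $A_\sma$---is not addressed in your plan.
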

We prove Theorem \ref{maintheorem} via the following set-up. Define the family of maps
$\{\Phi_\alpha\}_{\alpha \in \R}$ associated to the curve $\Gamma$ by
\begin{align*}
\Phi_\alpha:[\alpha-b,\alpha-a]\times\R &\to \{\alpha\}\times \R
\\
(x_1, x_2) &\mapsto (\alpha,x_2+f(\alpha-x_1)).
\end{align*}
In other words, for $x$ in the plane, $\Phi_\alpha(x)$ is the unique point of intersection between the shifted curve $\Gamma +x$ and the vertical line $\{\alpha\}\times\R$.  We will also abuse notation and consider $\Phi_\alpha$ as a map into $\R$, by identifying $\R$ with the vertical line $\{\alpha\}\times \R$. 
This $1$-parameter family of maps was originally introduced by the third listed author and K. Simon in \cite{STdim} to study the dimension and measure of sets of the form $A+\Gamma$, and to study the interior of such sets in \cite{STint}. 
Subsequently, in several joint works with Taylor, the theory of these maps was further developed and applied to obtain upper and lower asymptotic bounds on a nonlinear variant of the Favard length problem (also known as the Buffon needle problem) for the four-corner Cantor set \cite{CDT22} and for more general purely unrectifiable $1$-sets \cite{DT22}. 
See also, \cite{BT23}, where related theory is developed for transversal families of maps, and it is shown that Marstrand's projection theorem holds when orthogonal projection maps are replaced by the family $\{\Phi_\alpha\}$. 

By Fubini's theorem, \Cref{maintheorem} is equivalent to the following: 
\begin{thm}[Non-linear prescribed projection theorem]
\label{prescribedprojection} 
For each $\alpha \in \R$, suppose $\newA_\alpha\subset \R$ is such that $\bigcup_\alpha (\{\alpha\}\times \newA_\alpha)$ is measurable.  
There exists a set $E\subset \R^2$ such that for almost every $\alpha$,
\begin{enumerate}[label=(\alph*)]
\item $\Phi_\alpha(E)\supset \newA_\alpha$,
\item $|\Phi_\alpha(E)\setminus \newA_\alpha|=0$.
\end{enumerate}
\end{thm}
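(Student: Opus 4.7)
The plan is to adapt Falconer's proof of the linear prescribed projection theorem \cite[Theorem 7.11]{Fal86_book} to the generalized projections $\Phi_\alpha$. The argument proceeds in three stages: an outer regularization of the measurable data $\{\newA_\alpha\}$, a nonlinear Venetian blind lemma producing sets with approximately correct $\Phi_\alpha$-projections, and a limiting procedure that upgrades approximation to equality up to a null set.

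First, I would regularize the data. Since $G := \bigcup_\alpha(\{\alpha\}\times \newA_\alpha)$ is measurable by hypothesis, outer regularity of planar Lebesgue measure gives a decreasing sequence of open sets $G^{(n)}\supset G$ with $|G^{(n)}\setminus G|\to 0$. By Fubini, for a.e.\ $\alpha$ the slice $\newA_\alpha^{(n)}:=\{y:(\alpha,y)\in G^{(n)}\}$ is open in $\R$ and satisfies $|\newA_\alpha^{(n)}\setminus\newA_\alpha|\to 0$. This reduces the problem to working with countably many ``nice'' open families in place of the abstract measurable data.

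The heart of the proof is a nonlinear Venetian blind lemma: for any measurable family $\alpha\mapsto U_\alpha$ and any $\e>0$, there is an $F_\e\subset\R^2$ with $\Phi_\alpha(F_\e)\supset U_\alpha$ and $|\Phi_\alpha(F_\e)|\leq|U_\alpha|+\e$ for a.e.\ $\alpha$. Starting from the fat preimage $T:=\bigcup_\alpha\Phi_\alpha^{-1}(U_\alpha)$, which covers but over-projects, one slices $T$ into thin vertical slabs and applies a Davies/Perron-tree rearrangement inside each slab. The $C^1$ regularity of $f$ makes $\Phi_\alpha$ essentially affine on each short $\alpha$-window so the classical planar rearrangement applies up to controllable error, while the strict monotonicity of $f'$ supplies quantitative transversality --- a level curve of $\Phi_\alpha$ meets a level curve of $\Phi_{\alpha'}$ with crossing angle comparable to $|f'(\alpha)-f'(\alpha')|$ --- which makes the overlap estimates uniform in $\alpha$. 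Verifying this is the principal obstacle of the proof, and it is where the hypotheses on $f$ are used in an essential way.

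Finally, with the lemma in hand, I apply it with $U_\alpha=\newA_\alpha^{(n)}$ and $\e=1/n$ to obtain sets $F_n$ satisfying $\Phi_\alpha(F_n)\supset\newA_\alpha$ and $|\Phi_\alpha(F_n)\setminus\newA_\alpha|\leq \tfrac{1}{n}+|\newA_\alpha^{(n)}\setminus\newA_\alpha|$ for a.e.\ $\alpha$. To combine these into a single set with exact projections, I would work within a fixed bounded region, arrange each $F_n$ to be closed with $F_{n+1}\subset F_n$ and with $F_n\cap\Phi_\alpha^{-1}(y)$ non-empty for every $y\in\newA_\alpha$, and set $E:=\bigcap_n F_n$. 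A nested-compact-sets argument then yields $\Phi_\alpha(E)\supset\newA_\alpha$, while the inclusion $\Phi_\alpha(E)\subset\bigcap_n\Phi_\alpha(F_n)$ combined with the approximate bounds above forces $|\Phi_\alpha(E)\setminus\newA_\alpha|=0$ for a.e.\ $\alpha$.
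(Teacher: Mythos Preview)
Your three-stage outline---regularize, approximate via a Venetian-blind lemma, pass to a limit by intersecting nested closed sets---matches the paper's architecture, but the two steps you treat as routine are precisely where the substantive work lies, and your sketch does not supply the missing ideas.

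First, the nesting $F_{n+1}\subset F_n$ that you ``arrange'' in the last paragraph is the crux of the limiting argument and does not come for free. A rearrangement construction applied to the data $\newA_\alpha^{(n+1)}$ produces a set with no a~priori relation to $F_n$; to force $F_{n+1}\subset F_n$ while maintaining $\Phi_\alpha(F_{n+1})\supset\newA_\alpha$ you must carry out the stage-$(n{+}1)$ construction \emph{inside} the pieces of $F_n$. The paper handles this via a ``system of disks'' framework: each approximant is a countable union of open disks, and a separate topological base lemma guarantees that inside any given disk $D$ one can build a finite union of smaller disks whose $\Phi$-image covers a prescribed basic open set and overshoots by less than $\e$. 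That localization-inside-a-previous-disk is what makes the nested-compact-sets argument run; your proposal asserts the nesting but gives no mechanism for producing it.

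Second, your Venetian-blind sketch glosses over an obstacle specific to this nonlinear setting: each $\Phi_\alpha$ is defined only on the strip $[\alpha-b,\alpha-a]\times\R$. When a segment meets the boundary of $\dom\Phi_\alpha$, the covering conclusion can fail even though the tangent directions lie in the ``covering'' range, because the fiber $\Phi_\alpha^{-1}(y)=y-\Gamma$ simply terminates before reaching the rearranged pieces. The paper's key lemma is therefore formulated so that $A_\cov$ is a compact interval on which the relevant curve segment lies in $\intdom\Phi_\alpha$, and this domain restriction must be threaded through every inductive step of the iterated blind construction. Your appeal to ``$\Phi_\alpha$ essentially affine on short $\alpha$-windows'' and to a Perron-tree rearrangement (which is adapted to full lines, not to curve segments of finite extent) does not address this; the paper explicitly flags the domain issue as the principal technical difficulty separating the nonlinear case from Falconer's linear argument.
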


It is interesting to compare and contrast Theorems \ref{maintheorem} and \ref{prescribedprojection} with the aforementioned results of Davies and Falconer.  The techniques used in this paper draw considerable inspiration from Falconer's proof of the prescribed projection theorem for orthogonal projections \cite{Fal86_paper}.  However, the class of generalized projections considered in this paper does not actually include orthogonal projections.  Indeed, it is not hard to see that Theorem \ref{maintheorem} could not possibly hold if $\Gamma$ is a line; by Fubini, any family of translates of a line that has positive measure on one slice must have positive measure on all (parallel) slices.  On the other hand, the family of all lines does not coincide with the family of translates of any single line. (It is the family of translations and rotations of any single line, but our setup does not allow rotations.)  Thus, our result is not a \emph{generalization} of Davies's theorem; it is a related but new result, and new ideas are required.  Both of these results are partial answers to the following general question.
\begin{question}
What families of plane curves $\cC$ have the property that for any measurable $\newA\subset \R^2$, there is a sub-family $\cC'\subset \cC$ such that $\newA\subset \cup \cC'$ and $|(\cup \cC')\setminus \newA|=0$? 
\end{question} 
Davies's Theorem shows that the family of lines has this property, while Theorem \ref{maintheorem} shows that the family of translates of a fixed curve has this property, under certain geometric assumptions on the curve. In both cases, the family of curves is two-dimensional, which might be the critical parameter dimension for this type of problem. It would be very interesting to develop a complete description of the families of curves with this property which unifies these partial results.

We end this subsection with several remarks to give greater context for our results.
\begin{rmk}
    As a concrete example, note that \Cref{maintheorem} applies to the case where $\Gamma$ is the upper quarter of the unit circle.  It is an open question to determine whether Theorem \ref{maintheorem} can be extended to the case where $\Gamma$ is the entire unit circle.  In addition to the circle not being the graph of a single function, this would require handling the case where the tangent slopes are unbounded.  
\end{rmk}

\begin{rmk}
As another concrete example, if we take $\newA$ to be the $y$-axis and $\Gamma$ to be the graph of any $C^1$ function $[a,b]\to\R$ with a strictly monotone derivative, then by using the fact that $\Gamma$ is a graph over the $x$-axis, \Cref{maintheorem} produces a Besicovitch-type set for $\Gamma$, i.e., a one-dimensional family of translates of $\Gamma$ whose union
has zero area. For such $\Gamma$, this is an improvement over \cite[Theorem 6.2]{CC} by the first author and Cs\"ornyei, which yields a one-dimensional family of translates of full-length subsets of $\Gamma$ whose union has zero area 
\end{rmk}
\begin{rmk}
We can say the following about the relationship between the sets $\newA$ and $E$ in Theorem \ref{maintheorem}. 
It follows from the main results in \cite{STdim} that 
$\dim_{\rm H}
(E+\Gamma) = 
\min\{\dim_{\rm H}(E) + \dim_{\rm H}(\Gamma), 2\}$; and
if $E$ is a $1$-set, then $|E+\Gamma|=0$ if and only if $E$ is purely unrectifiable.
It follows that if $\newA$ is a set of positive measure and the diameter of $\newA$ is $< (b-a)$, 
then the set $E$ constructed by the proof of \Cref{maintheorem} is a purely unrectifiable set of dimension $1$ that is not of $\sigma$-finite length.
\end{rmk}
%%%%%%%%%

% \begin{rmk}
% \label{parallel_lines_remark}
% By Fubini's theorem, \Cref{maintheorem} is false if $f$ is a linear function. Unlike Davies' theorem for line segments, we do not allow rotations. Thus, some kind of curvature assumption on $f$ is required in \Cref{maintheorem}.

% \alan{the referee's comment (4) suggests adding more here}
% \end{rmk}

\begin{rmk}
\label{general_classes}
%It is also interesting to consider the more general problem of efficient coverings by classes of curves $\cC$, which are not required to be translates of a single fixed curve $\Gamma$.  Thus, Davies Theorem gives efficient coverings where $\cC$ is the family of lines, and Remark \ref{parallel_lines_remark} points out that one cannot get an efficient covering theorem where $\cC$ is the family of lines in a given direction.  Our main theorem gives efficient coverings for families $\cC=\{\Gamma+x:x\in\R^2\}$ for curves $\Gamma$ satisfying some assumptions. \\

There are some families $\cC$ for which efficient covering theorems can be proved as simple corollaries of Davies' Theorem:
\begin{enumerate}
    \item 
If $\cC$ is the class of circles (not of fixed radius), then we may use the complex inversion map $z\mapsto 1/z$ to turn efficient coverings by lines into efficient coverings by circles of \textit{varying radii}.  One simply needs to ensure that lines in Davies' theorem stay away from the origin and use the fact that this map turns lines not through the origin into circles of varying radii.  By contrast, our main theorem allows efficient coverings by arcs of the unit circle, but not by shifts of the entire unit circle.  
\item 
Another interesting example is the family of parabolas $\cC=\{P(a,b):a,b\in\R\}$, where
\[
P(a,b)=\{x\in\R^2:x_2=x_1^2+ax_1+b\}.
\]
The existence of efficient coverings by parabolas can be proved as a corollary of Davies' theorem; the proof is found in Appendix \ref{parabola}.
\end{enumerate} 
\end{rmk}

%%%%%%%%%%%%% prescribed curved

%%%%%%%%%
\subsection{Notation and elementary calculations}
\label{notation}
 Throughout, fix a compact interval $[a,b]$ and a function $f\in C^1([a,b])$ such that $f'$ is strictly monotone, and let 
 \[
 \Gamma:=\{(t,f(t)):t\in[a,b]\}
 \]
 be its graph. The projections $(\Phi_\alpha)_{\alpha \in \R}$ associated to the curve $\Gamma$ are given by
\begin{align*}
\Phi_\alpha:[\alpha-b,\alpha-a]\times\R &\to \{\alpha\}\times \R
\\
(x_1, x_2) &\mapsto (\alpha,x_2+f(\alpha-x_1)).
\end{align*}
 In other words, $\Phi_\alpha(x)$ is the unique point of intersection between the curve $\Gamma +x$ and the vertical line $\{\alpha\}\times\R$.  The preimage of a point $y\in \{\alpha\}\times\R$ is therefore $-\Gamma+y$.  Finally, we define $\Phi(E)=\bigcup_\alpha \Phi_\alpha(E\cap\dom\Phi_\alpha)=\Gamma+E$.  If $x\in\R^2$, we write $\Phi(x)=\Phi(\{x\})$.  \\

We will use the notation
\[
\dom \Phi_\alpha=[\alpha-b,\alpha-a]\times\R
\]
to refer to the domain of the projection $\Phi_\alpha$.  Many of our lemmas will contain the assumption that a given compact set is contained in the interior of the domain of $\Phi_\alpha$, so we also use the notation
\[
\intdom \Phi_\alpha=(\alpha-b,\alpha-a)\times\R.
\]

Given $\delta>0$ and a set $E\subset \R^2$, we write $\nbhd{E}{\delta}$ for the open $\delta$-neighborhood of $E$, and we write $B(x,r)$ for the ball centered at $x\in\R^2$ of radius $r>0$. \\

Throughout, we shall use the notation $X\lesssim Y$ to mean $X\leq CY$ for some positive constant $C$, with $C$ possibly depending on $f$ but not on any other parameters.  The notation $X\gtrsim Y$ is defined analogously, and $X\approx Y$ means $X\lesssim Y$ and $X\gtrsim Y$. \\

%\alan{new!!} 

We consider the real projective line, which we identify with $\R/\pi\Z$ and think of as the space of \emph{unoriented} directions in $\R^2$. Given $\theta_1, \theta_2 \in \R/\pi\Z$, we let $[\theta_1, \theta_2] \subset \R/\pi\Z$ denote the closed arc from $\theta_1$ to $\theta_2$, using the standard (counterclockwise) orientation of the real projective line. Equivalently, 
\begin{align}
\label{eq:def interval in P1}
\theta \in [\theta_1, \theta_2] \iff \text{$\theta_1, \theta_2, \theta$ have representatives $\widetilde \theta_1, \widetilde \theta_2, \widetilde \theta \in \R$ such that $\widetilde \theta_1 \leq \widetilde \theta \leq \widetilde \theta_2$ and $\widetilde \theta_2 - \widetilde \theta_1 < \pi$.}
\end{align}
We define open arcs $(\theta_1, \theta_2)$ similarly. We define $|\theta_1 - \theta_2| \in [0,\frac{\pi}{2}]$ via the natural metric on $\R/\pi\Z$ induced by the metric on $\R$. \\

%\alan{maybe we don't need this paragraph} Given an angle (or \emph{oriented} direction) $\omega\in \R/2\pi \Z$, we let $e_\omega := (\cos\omega, \sin\omega)$ denote the unit vector in direction $\omega$. If $\omega \in \R/2\pi \Z$, then we let $\overline\omega \in \R/\pi\Z$ denote the reduction modulo $\pi$ (i.e., ``forget'' the orientation).

% \alan{remove the following?} We also define $e_\theta^\perp=e_{\theta+\frac{\pi}{2}}$. When we write statements of the form
% %
% \begin{align}
% \label{inequality}
% \text{``}
% \theta_1 < \theta_2 < \theta_3
% \qquad\text{and}\qquad
% \theta_3 - \theta_1 < \pi,\qquad
% \text{where }\theta_1, \theta_2, \theta_3 \in \R/2\pi\Z,
% \text{''}
% \end{align}
% we mean that $\theta_1, \theta_2, \theta_3$ have representatives $\widetilde \theta_1, \widetilde \theta_2, \widetilde \theta_3 \in \R$ such that $\widetilde \theta_1 < \widetilde \theta_2 < \widetilde \theta_3$ and $\widetilde \theta_3 - \widetilde \theta_1 < \pi$. Similarly, in this situation, we write ``$\theta_2 \in (\theta_1, \theta_3)$.''

Suppose $x\in [\alpha-b,\alpha-a]\times\R$ and $y=\Phi_\alpha(x)$.  Since $f$ is differentiable, there is a well-defined tangent to the curve $-\Gamma +y$ at the point $x$; we denote the direction of this tangent by $\theta_\alpha(x) \in \R/\pi\Z$.  Explicitly, $\theta_\alpha(x)$ is the (unoriented) direction of the vector $(1,f'(\alpha-x_1))$.

It is often useful to identify the line $\{\alpha\}\times \R$ with simply $\R$, and to view $\Phi_\alpha$ as a projection from a strip in the plane to the real line.  By abuse of notation, we will write things like $\Phi_\alpha(x)\leq \Phi_\alpha(y)$ to mean the corresponding inequality under this identification; in other words, the point $\Phi_\alpha(y)$ lies above $\Phi_\alpha(x)$ vertically.  We will also denote the gradient of this map by $\nabla \Phi_\alpha$:
\[
\nabla \Phi_\alpha(x)=(-f'(\alpha-x_1),1).
\]
By continuity of $f'$, we have $|\nabla \Phi_\alpha(x)|\approx 1$.  The vector $\nabla \Phi_\alpha(x)$ is orthogonal to the direction $\theta_\alpha(x)$. Since $f'$ is strictly monotone, it follows that for each fixed $x$ the angle $\theta_\alpha(x)$ is monotone in $\alpha$. 
% The derivative of the angle with respect to $\alpha$ is
% \[
% \partial_\alpha \theta_\alpha(x)=\frac{f''(\alpha-x_1)}{1+f'(\alpha-x_1)^2}.
% \]
% Since $f\in C^2([a,b])$ with non-vanishing second derivative, for each fixed $x$ the angle $\theta_\alpha(x)$ is monotone in $\alpha$. % and satisfies the ``transversality'' condition $|\partial_\alpha \theta_\alpha(x)|\approx 1$.  

\subsection{Outline of the proof strategy}
\label{outlinesection}
The proof of Theorem \ref{maintheorem} is based on what is called a ``Venetian blind'' construction.  The purpose of this section is to give the reader a high-level overview of this construction, before getting into the technical details in Section \ref{VenetianBlindSection}.  We start by illustrating Venetian blinds in the linear setting.  For $\theta\in \R/\pi\Z$, let $\pi_\theta$ denote the linear projection with fibers having angle $\theta$; thus, under this convention, $\pi_0$ is projection onto the vertical axis (the fibers are horizontal), and $\pi_{\pi/2}$ is projection onto the horizontal axis (the fibers are vertical).  If $L$ is a horizontal line segment, then $\pi_0(L)$ is a single point, $\pi_{\pi/2}(L)$ is a segment of length $|L|$, and for intermediate $\theta$, $\pi_\theta(L)$ is a segment of intermediate length (Figure \ref{A}). 
\begin{figure}[h]
\centering
\begin{minipage}[b]{0.45\linewidth}
\begin{tikzpicture}
%Lines we project onto
\draw (0,3)--(0,6);
\draw (0,2)--(2,0);
\draw (3,0)--(6,0);

%The Segment L
\draw[ultra thick] (4,5)--(5,5);

%Arrows
\draw[blue, ->] (4.5,4.5)--(4.5,4);
\draw[red, ->] (3.5,5)--(3,5);
\draw[purple, ->] (3.8,4.5)--(3.4,4.1);

%Projections
\draw[ultra thick, blue] (4,0)--(5,0);
\draw[fill, red, ultra thick](0,5)circle[radius=0.01];
\draw[ultra thick, purple] (.5,1.5)--(1,1);

%Labels
\node[above right] at (5,5) {$L$};
\node[left, red] at (0,5) {small};
\node[below left, purple] at (.75, 1.25) {medium};
\node[below, blue] at (4.5,0) {large};
\end{tikzpicture}
\caption{Projections of a horizontal segment}
\label{A}
\end{minipage}
\qquad
\begin{minipage}[b]{0.45\linewidth}
\begin{tikzpicture}
%Lines we project onto
\draw (0,3)--(0,6);
\draw (0,2)--(2,0);
\draw (3,0)--(6,0);

%Blinds
\draw[ultra thick] (4,5)--(4.1,5.1);
\draw[ultra thick] (4.1,5)--(4.2,5.1);
\draw[ultra thick] (4.2,5)--(4.3,5.1);
\draw[ultra thick] (4.3,5)--(4.4,5.1);
\draw[ultra thick] (4.4,5)--(4.5,5.1);
\draw[ultra thick] (4.5,5)--(4.6,5.1);
\draw[ultra thick] (4.6,5)--(4.7,5.1);
\draw[ultra thick] (4.7,5)--(4.8,5.1);
\draw[ultra thick] (4.8,5)--(4.9,5.1);
\draw[ultra thick] (4.9,5)--(5,5.1);

%Arrows
\draw[blue, ->] (4.5,4.5)--(4.5,4);
\draw[red, ->] (3.5,5)--(3,5);
\draw[red, ->] (3.8,4.5)--(3.4,4.1);

%Projections
\draw[ultra thick, blue] (4,0)--(5,0);
\draw[ultra thick, red] (0,5)--(0,5.2);
\draw[fill, red, ultra thick](.5,1.5)circle[radius=0.01];

\draw[fill, red, ultra thick](.6,1.4)circle[radius=0.01];

\draw[fill, red, ultra thick](.7,1.3)circle[radius=0.01];

\draw[fill, red, ultra thick](.8,1.2)circle[radius=0.01];

\draw[fill, red, ultra thick](.9,1.1)circle[radius=0.01];

%Labels
\node[above right] at (5,5) {$L$};
\node[left, red] at (0,5) {small};
\node[below left, red] at (.75, 1.25) {small};
\node[below, blue] at (4.5,0) {cover};
\end{tikzpicture}
\caption{Extending the ``small projection'' range}
\label{B}
\end{minipage}
\end{figure}
 Now, suppose we replace $L$ with $N$ segments $L_1,\dots,L_N$ as in Figure \ref{C}, so that:
\begin{itemize}
\item The segments $L_i$ each have one ``initial'' endpoint on $L$ and one ``terminal'' endpoint above $L$,
\item The initial endpoints of the new segments are evenly spaced along $L$,
\item The segments $L_i$ make angle $\theta_0$ with $L$,
\item The segments $L_i$ are just long enough that the terminal endpoint of $L_i$ lies directly above the initial endpoint of $L_{i+1}$.
\end{itemize}

\begin{figure}[h]
\centering
\begin{tikzpicture}
%The segment L
\draw[thick] (0,0)--(8,0);

%Blinds
\draw[thick] (0,0)--(1,1);
\draw[thick] (1,0)--(2,1);
\draw[thick] (2,0)--(3,1);
\draw[thick] (3,0)--(4,1);
\draw[thick] (4,0)--(5,1);
\draw[thick] (5,0)--(6,1);
\draw[thick] (6,0)--(7,1);
\draw[thick] (7,0)--(8,1);

%Arc to label angle
\draw (.4,0)to[out=90,in=135](.25,.25);

%labels
\node[right] at (8,0) {$L$};
\node[above] at (1,1) {$L_1$};
\node[above] at (2,1) {$L_2$};
\node[above] at (8,1) {$L_N$};
\node[right] at (.4,.2) {$\theta_0$};
\end{tikzpicture}
\caption{Venetian blinds}
\label{C}

\end{figure}
When we project onto the $x_1$-axis, the projection is unchanged.  Thus, we refer to the direction $\theta=\pi/2$ as a ``covering'' direction, since we have the covering property
\[
\pi_{\pi/2}(L_1\cup\cdots\cup L_N)\supset \pi_{\pi/2}(L).
\]
If we project onto the $x_2$ axis instead, the projection has gone from a single point to an interval; however, this interval can be made as small as desired by making $N$ large.  Thus, the direction $\theta=0$ is still a ``small'' direction (as are directions $\theta$ sufficiently close to $0$).  However, if we project in direction $\theta=\theta_0$, the image is a finite set of points.  Therefore, we have extended the range of ``small'' directions to include $\theta=\theta_0$ (Figure \ref{B}), as well as directions $\theta$ sufficiently close to $\theta_0$.  Now, suppose we make $\theta_0$ very small, and iterate the construction.  At each iteration, we increment the angle by the same small amount, until we get to (for example) an angle $\pi/4$.  If the angle increment is sufficiently small, we can ensure that all directions $\theta\in [0,\pi/4]$ are ``small projection'' directions, and all directions $\theta\in [\pi/4+\delta,\pi/2]$ are ``covering'' directions (for some small $\delta$).  Versions of this construction are the basis of prescribed projection results.  The goal is to prove lemmas of the following forms:
\begin{itemize}
\item \textbf{Small projections lemma:} Let $\e>0$, let $L$ be a line segment, and let $\Theta_\sma\subset \R/\pi\Z$ be compact.  There exist parameters for the iterated Venetian blind construction which produce a finite collection of line segments $\mathcal{V}$ satisfying $|\pi_\theta(\cup \mathcal{V})|<\e|L|$ whenever $\theta\in \Theta_\sma$.
\item \textbf{Covering lemma:} Let $L$ be a line segment, and let $\Theta_\cov\subset \R/\pi\Z$ be compact.  There exist parameters for the iterated Venetian blind construction which produce a finite collection of line segments $\mathcal{V}$ satisfying $\pi_\theta(\cup\mathcal{V})\supset \pi_\theta(L)$ whenever $\theta\in \Theta_\cov$.
\item \textbf{Combined lemma:} Under some conditions on the sets $\Theta_\sma$ and $\Theta_\cov$, the parameters can be chosen to simultaneously give a small projection lemma and a covering lemma.
\end{itemize}
In this paper, our goal is to extend this scheme to curve projections.  Let $\Phi_\alpha(x)$ and $\theta_\alpha(x)$ be as defined in Section \ref{notation}.  For a given $x\in \R^2$ and $y=\Phi_\alpha(x)$, we will be interested in the fiber of $x$ under the projection $\Phi_\alpha$, namely $\Phi_\alpha^{-1}(y)=y-\Gamma$.  Recall that $\theta_\alpha(x)$ is the tangent to this curve at the point $x$.  Therefore, locally, the projection $\Phi_\alpha(x)$ acts like a linear projection in the direction $\theta_\alpha(x)$.  If $\theta_0$ is a ``covering'' direction for our Venetian blind construction and $\theta_\alpha(x)\sim \theta_0$ for all $x$ on the line segment, then we will have a covering statement for the projections $\Phi_\alpha$.  Geometrically, this means that any fiber which intersects the line segment must also intersect the blinds if the tangents are all in the covering directions (Figure \ref{D}).  On the other hand, if $\alpha$ is such that $\theta_\alpha(x)$ is a small direction for all $x\in L$, the measure $|\Phi_\alpha(L)|$ of the projection will be small.

\begin{figure}[h]
\centering
\begin{tikzpicture}
%The segment L
\draw[thick] (0,0)--(8,0);

%Blinds
\draw[thick] (0,0)--(1,1);
\draw[thick] (1,0)--(2,1);
\draw[thick] (2,0)--(3,1);
\draw[thick] (3,0)--(4,1);
\draw[thick] (4,0)--(5,1);
\draw[thick] (5,0)--(6,1);
\draw[thick] (6,0)--(7,1);
\draw[thick] (7,0)--(8,1);

%fiber
 \draw [blue,thick,domain=208:250] plot ({9.1+4*cos(\x)}, {3.2+4*sin(\x)});
 \draw [red,thick,domain=290:330] plot ({4*cos(\x)}, {3.2+4*sin(\x)});

%labels
\node[right] at (8,0) {$L$};
\node[above] at (1,1) {$L_1$};
\node[above] at (2,1) {$L_2$};
\node[above] at (8,1) {$L_N$};
\node[right, blue] at (10.4,-.1) {cover};
\node[right, red] at (10.4,1.1) {small};

%key
\path[fill=blue!20](10.72, .5)to[out=270,in=0](10,-.22)--(10,.5)--(10.72,.5);
\path[fill=red!20](10.72, .5)to[out=90,in=300](10.4,1.1)--(10,.5)--(10.72, .5);
\draw[thick, blue, ->] (10,.5)--(10.4,-.1);
\draw[thick, red, ->] (10,.5)--(10.4,1.1);

\end{tikzpicture}
\caption{Fibers of curve projections with tangents in covering directions and small directions.   }
\label{D}
\end{figure}

If $x$ is fixed and $\Theta_\sma$ and $\Theta_\cov$ are sets of directions, we have corresponding sets of our parameter $\alpha$:
\begin{align*}
A_\sma&=\{\alpha\in \R:\theta_\alpha(x)\in \Theta_\sma\}, \\
A_\cov&=\{\alpha\in \R:\theta_\alpha(x)\in \Theta_\cov\}.
\end{align*}
Since $\theta_\alpha(x)$ is continuous and monotone, assumptions on $\Theta_\cov$ and $\Theta_\sma$ turn into assumptions on $A_\cov$ and $A_\sma$, and vice versa.  For example, $A_\cov$ is an interval if and only if $\Theta_\cov$ is an interval, $A_\sma$ is compact if and only if $\Theta_\cov$ is compact, and so on.  The main work of this paper, undertaken in Sections \ref{VenetianBlindSection} and \ref{KeyLemmaSection}, is to prove non-linear versions of the lemmas discussed above:
\begin{itemize}
\item \textbf{Non-linear small projections lemma:}  Let $\e>0$, let $L$ be a line segment, and let $A_\sma  \subset \R$ be compact.  There exist parameters for the iterated Venetian blind construction which produce a finite collection of line segments $\mathcal{V}$ satisfying $|\Phi_\alpha(\cup\mathcal{V})|<\e|L|$ whenever $\alpha\in A_\sma$.
\item \textbf{Non-linear covering lemma:}  Let $L$ be a line segment, and let $A_\cov\subset \R$ be compact.  There exist parameters for the Venetian blind construction which produce a finite collection of line segments $\mathcal{V}$ satisfying $\Phi_\alpha(\cup\mathcal{V})\supset \Phi_\alpha(L)$ whenver $\alpha\in A_\cov$.
\item \textbf{Non-linear combined lemma:}  Under some conditions on the sets $A_\sma$ and $A_\cov$, the parameters can be chosen to simultaneously give a non-linear small projections lemma and a non-linear covering lemma.
\end{itemize}

While our construction uses the same Venetian blind scheme as in the linear setting, proving non-linear versions of these lemmas requires many new ideas which distinguish this paper from previous work.  One difficulty in our present setting which must be accounted for is the fact that the domain of our curve projections $\Phi_\alpha$ is not all of $\R^2$, but rather the vertical strip $[\alpha-b,\alpha-a]\times \R$.  This complicates our ability to prove a non-linear covering lemma, because covering can fail due to domain issues rather than direction issues.  If $L$ and $\alpha$ are such that $L$ intersects the boundary of $\dom \Phi_\alpha$, then even if the directions $\theta_\alpha(x)$ are in covering directions, a fiber which intersects $L$ may not intersect any of the blinds simply because it ends before reaching them (Figure \ref{E}).  Moreover, because the proof is based on iteration, we have to consider many different segments and many different values of the parameter $\alpha$ at once, so the issue is not resolved by simply assuming that one particular segment is contained in one particular strip.  Instead, many of the restrictions in our lemmas proved in Sections \ref{VenetianBlindSection} and \ref{KeyLemmaSection} are forced on us by the need to handle the domain issue.
\begin{figure}[h]
\centering
\begin{tikzpicture}
%The segment L
\draw[thick] (0,0)--(8,0);

%Blinds
\draw[thick] (0,0)--(1,1);
\draw[thick] (1,0)--(2,1);
\draw[thick] (2,0)--(3,1);
\draw[thick] (3,0)--(4,1);
\draw[thick] (4,0)--(5,1);
\draw[thick] (5,0)--(6,1);
\draw[thick] (6,0)--(7,1);
\draw[thick] (7,0)--(8,1);

%fiber
 \draw [blue,thick,domain=210:230] plot ({8.8+6*cos(\x)}, {3.2+6*sin(\x)});

%labels
\node[right] at (8,0) {$L$};
\node[above] at (1,1) {$L_1$};
\node[above] at (2,1) {$L_2$};
\node[above] at (8,1) {$L_N$};
\node[right, blue] at (10.4,-.1) {cover};
\node[right, red] at (10.4,1.1) {small};

%key
\path[fill=blue!20](10.72, .5)to[out=270,in=0](10,-.22)--(10,.5)--(10.72,.5);
\path[fill=red!20](10.72, .5)to[out=90,in=300](10.4,1.1)--(10,.5)--(10.72, .5);
\draw[thick, blue, ->] (10,.5)--(10.4,-.1);
\draw[thick, red, ->] (10,.5)--(10.4,1.1);
\end{tikzpicture}
\caption{Venetian blinds and fibers of curve projections}
\label{E}
\end{figure}

The remainder of the paper is organized as follows.  In Section \ref{VenetianBlindSection}, we rigorously define the Venetian blind set and iterated Venetian blind set corresponding to an initial segment and a choice of parameters, and prove small projections lemmas and covering lemmas for each stage of the construction.  In Section \ref{KeyLemmaSection}, we prove a key lemma which can be understood as a stepping stone to our main theorem, and forms the basis for its proof.  Finally, in  \Cref{ProofSection}, we prove a series of lemmas which establish a mechanism for converting the key lemma (\Cref{lemma: keylemma}) in Section \ref{KeyLemmaSection} into an efficient covering theorem, and then prove Theorem \ref{maintheorem}.

\subsection{Acknowledgments}
We thank Kenneth Falconer for introducing us to this area, and we thank Falconer and K\'aroly Simon for productive discussions related to this article. We also thank Damian D\k{a}browski and Tuomas Orponen for several helpful comments in preparing the draft. We thank Robert Fraser and Mark Meyer for pointing out a gap in an earlier version of \Cref{base}. We thank the anonymous referee for independently pointing out the same gap, as well as for many helpful suggestions that improved the paper in many ways.

\section{The Venetian blind construction}
\label{VenetianBlindSection} 
Given two distinct points $a,b \in \R^2$, we define the (oriented) line segment
\[
\LINE(a,b) = \{(1-t)a + tb : t \in [0,1]\}.
\]
For a line segment $L = \LINE(a,b)$, we let $|L|$ denote the length of $L$ and $\theta_L \in \R/\pi\Z$ denote the direction of $L$.

In the context of linear projections, the segment $\LINE(a,b)$ maps to a single point when projected in the direction $\theta_{\LINE(a,b)}$, and has small length if projected in directions close to $\theta_{\LINE(a,b)}$.  In the general projection setting, we have the following version of this principle.
\begin{lem}[Small projections in certain directions]
\label{lemma:segment-small-projection-nonlinear}
For any line segment $L = \LINE(a,b)$ and all $\alpha \in \R$,
\[
|\Phi_\alpha(L \cap \dom \Phi_\alpha)| 
\lesssim
\left(\sup_{z \in L \cap \dom \Phi_\alpha} |\theta_{\alpha}(z) - \theta_L|\right) |L|.
%\leq
%C_{\mathrm{reg}}  \left(C_{\mathrm{reg}} \ell + |\theta_\alpha(a) - \omega|\right) \ell
\]
\end{lem}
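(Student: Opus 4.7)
The plan is to parametrize $L \cap \dom \Phi_\alpha$ by arclength and apply the chain rule to $\Phi_\alpha$ composed with this parametrization, converting the problem into an estimate of the derivative of a scalar function of one variable. Since $L \cap \dom \Phi_\alpha$ is the intersection of a line with a vertical strip, it is a subsegment of $L$ (possibly empty, in which case the bound is trivial); write it as $L'$ and let $\gamma:[0,|L'|]\to \R^2$ be a unit-speed parametrization, so $\gamma'(t)$ is a unit vector in direction $\theta_L$. Because $\Phi_\alpha\circ\gamma$ is a continuous real-valued function on an interval, its image is itself an interval, so
\[
|\Phi_\alpha(L')|\ \le\ \int_0^{|L'|} |(\Phi_\alpha\circ\gamma)'(t)|\,dt \ =\ \int_0^{|L'|}|\nabla\Phi_\alpha(\gamma(t))\cdot\gamma'(t)|\,dt.
\]

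The central calculation is to identify the integrand with $|\sin(\theta_\alpha(\gamma(t))-\theta_L)|$ up to a bounded factor. Recall $\nabla\Phi_\alpha(x)=(-f'(\alpha-x_1),1)$, whereas the tangent to the fiber at $x$ is in direction $\theta_\alpha(x)$, i.e.\ the direction of $(1,f'(\alpha-x_1))$. Hence $\nabla\Phi_\alpha(x)$ is a $90^\circ$ rotation of a vector in direction $\theta_\alpha(x)$, which means the unsigned angle between $\nabla\Phi_\alpha(x)$ and the unit vector in direction $\theta_L$ differs from $\pi/2$ by exactly $|\theta_\alpha(x)-\theta_L|$ (the metric on $\R/\pi\Z$). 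Consequently
\[
|\nabla\Phi_\alpha(\gamma(t))\cdot\gamma'(t)| \ =\ |\nabla\Phi_\alpha(\gamma(t))|\,|\sin(\theta_\alpha(\gamma(t))-\theta_L)|.
\]

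To finish, invoke the two standing bounds noted in Section~\ref{notation}: by continuity of $f'$ on the compact interval $[a,b]$ we have $|\nabla\Phi_\alpha(\gamma(t))|\approx 1$, and the elementary inequality $|\sin s|\le |s|$ applies to the $\R/\pi\Z$-distance since it lies in $[0,\pi/2]$. Plugging into the integral gives
\[
|\Phi_\alpha(L')|\ \lesssim\ \int_0^{|L'|} |\theta_\alpha(\gamma(t))-\theta_L|\,dt\ \le\ \Bigl(\sup_{z\in L'}|\theta_\alpha(z)-\theta_L|\Bigr)\,|L'|\ \le\ \Bigl(\sup_{z\in L\cap\dom\Phi_\alpha}|\theta_\alpha(z)-\theta_L|\Bigr)\,|L|,
\]
which is the desired bound.

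There is no real obstacle; the only point that requires a little care is the geometric identification of $\nabla\Phi_\alpha\cdot\gamma'$ with $|\nabla\Phi_\alpha|\,|\sin(\theta_\alpha-\theta_L)|$, which must be done using unoriented directions in $\R/\pi\Z$ (the sign of the dot product is irrelevant since we take absolute values, and $|\sin|$ descends to $\R/\pi\Z$). Everything else is the fundamental theorem of calculus plus the a~priori $|\nabla\Phi_\alpha|\approx 1$ bound already in place.
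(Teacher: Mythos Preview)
Your proof is correct and takes essentially the same approach as the paper: both identify $|\nabla\Phi_\alpha\cdot v|$ with $|\nabla\Phi_\alpha|\,|\sin(\theta_\alpha-\theta_L)|$ for $v$ in direction $\theta_L$, then use $|\nabla\Phi_\alpha|\approx 1$ and $|\sin s|\le |s|$. The only cosmetic difference is that the paper applies the mean value theorem to bound $|\Phi_\alpha(x)-\Phi_\alpha(y)|$ directly for arbitrary $x,y\in L\cap\dom\Phi_\alpha$, whereas you integrate the derivative along the segment; both yield the same estimate.
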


\begin{proof}
By the convexity of $\dom \Phi_\alpha$ and the mean value theorem, for every $x,y \in L \cap \dom \Phi_\alpha$, there exists $z \in L \cap \dom \Phi_\alpha$ such that
\[
|\Phi_\alpha(x)-\Phi_\alpha(y)|=|\nabla \Phi_\alpha(z)\cdot (x-y)|
=
|\nabla \Phi_\alpha(z)| |x-y|
\sin|\theta_\alpha(z) - \theta_L|
\]
In the second equality above, we used the fact that the two vectors in the dot product have directions $\theta_\alpha(z)+\frac{\pi}{2}$ and $\theta_L$. Finally, we note that $|\nabla \Phi_\alpha(z)|\approx 1$ (by the assumption $f\in C^1$) and $|x-y|\leq |L|$, so
\[
|\Phi_\alpha(x)-\Phi_\alpha(y)|
\lesssim
 |L| \cdot |\theta_{\alpha}(z) - \theta_L|,
\]
which proves the lemma.
\end{proof}

\subsection{Dividing}

Given a line segment $\LINE(a,b)$ and $N \in \N$, we define 
\[
\DIVIDE(\LINE(a,b), N)
=
\{
\LINE(a_{i-1}, a_{i}) : i = 1, \ldots, N
\},
\qquad
\text{where }
a_i = (1-\tfrac{i}{N}) a + \tfrac{i}{N} b.
\]
In other words, $\DIVIDE(\cdots)$ divides the line segment into $N$ equally sized line segments.

\subsection{Rotating}
Given a line segment $L=\LINE(a,b)$ and two directions $\theta_{\cov}, \theta_{\sma} \in \R/\pi\Z$ such that $\theta_L, \theta_{\sma}, \theta_{\cov}$ are distinct, we define
\[
\ROTATE(\LINE(a,b), \theta_{\sma} , \theta_{\cov})
=
\LINE(a,c),
\]
where $c \in \R^2$ is the unique point satisfying the following:
(see \Cref{F}):
\begin{align}
\label{eq:triangle-small-cover}
\theta_{\LINE(a,c)} = \theta_{\sma}, \qquad
\theta_{\LINE(b,c)} = \theta_{\cov}
\end{align}
\begin{figure}[h]
\centering
\begin{tikzpicture}
%arrows
\draw[thick, -] (0,0)--(3,0);
\draw[thick, -] (0,0)--(2.5,1);
\draw[thick, -] (2.5,1)--(3,0);

%points
\draw[fill, ultra thick](0,0)circle[radius=0.05];
\draw[fill, ultra thick](3,0)circle[radius=0.05];
\draw[fill, ultra thick](2.5,1)circle[radius=0.05];

%labels
\node[below] at (1.5,0) {$\theta_L$};
\node[above left] at (1.25,.5) {$\theta_\sma$};
\node[above right] at (2.75,.5) {$\theta_\cov$};

\node[below left] at (0,0) {$a$};
\node[below right] at (3,0) {$b$};
\node[above] at (2.5,1.05) {$c$};

\end{tikzpicture}
\caption{Defining the point $c$.}
\label{F}
\end{figure}

Note that the ``rotated'' segment does not have the same length as the initial segment.  We have the following geometric relationship between the rotation and the original segment:
\begin{lem}[Basic geometry of $\ROTATE$]
\label{lemma: ROTgeo}
Let $L=\LINE(a,b)$, and let $\theta_{\cov}, \theta_{\sma} \in \R/\pi\Z$ be such that $\theta_L, \theta_{\sma}, \theta_{\cov}$ are distinct.
Then:
\begin{enumerate}[label=(\alph*)]
\item We have
\begin{align}
|\ROTATE(L, \theta_{\sma} , \theta_{\cov})|
=
\frac{\sin |\theta_L-\theta_\cov|}{\sin|\theta_\sma-\theta_\cov|}  |L|
\end{align}
\item Let $\delta=2\frac{\sin |\theta_\sma-\theta_L|}{\sin|\theta_\sma-\theta_\cov|} |L|$.  Then,
\[
\ROTATE(L,\theta_\sma,\theta_\cov)\subset \nbhd{L}{\delta}
\]
\end{enumerate}

\end{lem}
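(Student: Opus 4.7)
The plan is to derive both parts from standard triangle geometry applied to $\triangle abc$. The first step I would take is to relate the sines of the interior angles of this triangle to the direction distances in $\R/\pi\Z$. At each vertex, the interior angle lies in $(0,\pi)$, while the distance in $\R/\pi\Z$ between the two incident edge directions lies in $[0,\pi/2]$; the two quantities are either equal or supplementary, so their sines coincide. Concretely,
\[
\sin(\angle bac) = \sin|\theta_L - \theta_\sma|, \qquad
\sin(\angle abc) = \sin|\theta_L - \theta_\cov|, \qquad
\sin(\angle bca) = \sin|\theta_\sma - \theta_\cov|.
\]

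For part (a), I would apply the law of sines in the form
\[
\frac{|\ROTATE(L,\theta_\sma,\theta_\cov)|}{\sin(\angle abc)} = \frac{|\LINE(a,b)|}{\sin(\angle bca)},
\]
substitute the identifications above, and use $|\LINE(a,b)|=|L|$ to obtain the claimed formula immediately.

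For part (b), the key observation is that the segments $\LINE(a,c) = \ROTATE(L,\theta_\sma,\theta_\cov)$ and $\LINE(a,b) = L$ can be compared pointwise via their natural parametrizations sharing the basepoint $a$. For any $p = (1-t)a + tc$ with $t\in[0,1]$, the point $q := (1-t)a + tb$ lies in $L$, and a direct computation gives $p - q = t(c-b)$; hence
\[
\mathrm{dist}(p,L) \leq |p-q| = t\,|\LINE(b,c)| \leq |\LINE(b,c)|.
\]
A second application of the law of sines (using $|\LINE(b,c)|/\sin(\angle bac) = |\LINE(a,b)|/\sin(\angle bca)$) gives $|\LINE(b,c)| = \frac{\sin|\theta_L - \theta_\sma|}{\sin|\theta_\sma - \theta_\cov|}|L| = \delta/2$, so $\mathrm{dist}(p,L) \leq \delta/2 < \delta$.

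The only subtlety is the bookkeeping mentioned at the start: one must verify that the asymmetry between $(0,\pi)$ and $[0,\pi/2]$ only affects the cosines of the interior angles, not the sines, so that the law of sines can be written purely in terms of the $\R/\pi\Z$-distances that appear in the statement. The argument in fact establishes the stronger containment $\ROTATE(L,\theta_\sma,\theta_\cov) \subset \nbhd{L}{\delta/2}$, so the factor of $2$ in the definition of $\delta$ is comfortable slack — presumably built in to accommodate cleaner estimates downstream in the iterated Venetian blind construction.
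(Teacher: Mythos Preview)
Your proposal is correct and follows essentially the same approach as the paper: both parts are immediate applications of the law of sines in $\triangle abc$, together with the observation that every point of $\LINE(a,c)$ lies within $|c-b|$ of $L$. Your explicit parametrization $p-q = t(c-b)$ in part (b) is a slightly more detailed version of the paper's one-line observation that $\max_{x \in \LINE(a,c)} d(x,L) = d(c,L) \le |c-b|$, and your remark about interior angles versus $\R/\pi\Z$-distances having equal sines makes explicit a point the paper takes for granted.
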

\begin{proof}
By \eqref{eq:triangle-small-cover} and the law of sines, we have
\[
\frac{|\ROTATE(L, \theta_{\sma} , \theta_{\cov})|}{|L|}
=
\frac{|c-a|}{|b-a|}
=
\frac{\sin |\theta_L-\theta_\cov|}{\sin|\theta_\sma-\theta_\cov|} .
\]
This proves part (a).  For (b), we have
\begin{align*}
\max
\{d(x,L) : x \in \ROTATE(L,\theta_\sma,\theta_\cov)\}
=
d(c,L)
\leq
|c-b|=\frac{\sin |\theta_\sma-\theta_L|}{\sin|\theta_\sma-\theta_\cov|} |L|
=\frac{1}{2}\delta,
\end{align*}
where we used the law of sines again. (Note that the factor of $2$ in the definition of $\delta$ is needed because we define $\nbhd{L}{\delta}$ to be an open neighborhood.)
\end{proof}

\begin{lem}[Covering property for one segment]
\label{lemma:rotation-cover-nonlinear}
Let $L=\LINE(a,b)$, and let $\theta_\sma, \theta_\cov \in \R/\pi\Z$ satisfy $\theta_L \in (\theta_{\cov}, \theta_{\sma})$. (See \eqref{eq:def interval in P1} for the precise meaning of this.) Let $c \in \R^2$ be such that $\ROTATE(L,\theta_{\sma},\theta_{\cov})=\LINE(a,c)$ and let $\Delta$ be the convex hull of $\{a,b,c\}$.
Suppose $\alpha$ is such that $\Delta \subset \dom \Phi_\alpha$, and for all $x \in \Delta$ we have $\theta_\alpha(x)\in [\theta_{\cov}, \theta_L]$.  Then,
\[
\Phi_\alpha(L)
\subset
\Phi_\alpha (\ROTATE(L, \theta_{\sma} , \theta_{\cov})).
\]
\end{lem}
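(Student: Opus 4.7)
The plan is to reduce the inclusion $\Phi_\alpha(L) \subset \Phi_\alpha(\LINE(a,c))$ to a one-dimensional comparison, exploiting that both segments share the endpoint $a$. By continuity of $\Phi_\alpha$, both images are compact intervals containing $\Phi_\alpha(a)$, so the inclusion follows once one shows that (i) $\Phi_\alpha$ is monotone along each of $L$ and $\LINE(a,c)$, (ii) the two monotonicities point in the same direction away from $\Phi_\alpha(a)$, and (iii) $|\Phi_\alpha(c) - \Phi_\alpha(a)| \geq |\Phi_\alpha(b) - \Phi_\alpha(a)|$.

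For items (i) and (ii), I would parametrize each segment linearly from $a$ and compute the directional derivative $\nabla\Phi_\alpha(z)\cdot v$ with $v = b-a$ or $v = c-a$. In coordinates, $\nabla\Phi_\alpha(z) = (-f'(\alpha-z_1),1)$, and the hypothesis $\theta_\alpha(z) \in [\theta_\cov,\theta_L]$ throughout $\Delta$ bounds $f'(\alpha - z_1)$. The derivatives then reduce to sine expressions whose signs are controlled by the relative positions of $\theta_L, \theta_\sma,$ and $\theta_\alpha$ on the projective line. Since $\theta_L \in (\theta_\cov,\theta_\sma)$ places both $\theta_L$ and $\theta_\sma$ on the same side of the arc $[\theta_\cov, \theta_L]$ containing $\theta_\alpha$, both directional derivatives have constant, mutually consistent sign throughout their segments.

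For the magnitude comparison, I would apply the mean-value identity used in the proof of \Cref{lemma:segment-small-projection-nonlinear} to $\LINE(b,c) \subset \Delta$: there exists $z_3 \in \LINE(b,c)$ with $\Phi_\alpha(c) - \Phi_\alpha(b) = \nabla\Phi_\alpha(z_3)\cdot(c-b)$. The geometric definition of $c$ makes $\theta_{\LINE(b,c)} = \theta_\cov$, one endpoint of the permitted range for $\theta_\alpha(z_3)$. This forces $\sin|\theta_\alpha(z_3) - \theta_\cov|$ to appear in the magnitude and, together with the orientation of $c - b$, pushes $\Phi_\alpha(c)$ at least as far from $\Phi_\alpha(a)$ as $\Phi_\alpha(b)$ is, in the same direction.

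I expect the main difficulty to be careful sign bookkeeping in the projective line $\R/\pi\Z$: one must fix representatives via \eqref{eq:def interval in P1} that place $\theta_\cov, \theta_L, \theta_\sma$ within a common arc of length less than $\pi$, then verify that the sines and cosines produced by the dot-product computations all carry the signs consistent with the desired inclusion. A secondary subtlety is that $f'$, though strictly monotone, may attain its extremal values at isolated points of the relevant interval, so monotonicity and the key magnitude comparison may be non-strict; the weak inequalities nonetheless suffice.
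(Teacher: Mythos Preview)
Your approach is correct and essentially the same as the paper's: both arguments reduce the inclusion to the ordering $\Phi_\alpha(a) \leq \Phi_\alpha(b) \leq \Phi_\alpha(c)$ (up to a global sign), obtain monotonicity of $\Phi_\alpha$ along $L$ from the sign of $\nabla\Phi_\alpha\cdot(b-a)$, and use the mean value theorem on $\LINE(b,c)$ to get $\Phi_\alpha(b)\leq\Phi_\alpha(c)$.

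The paper's packaging is somewhat cleaner in two respects. First, rather than computing sines and tracking representatives in $\R/\pi\Z$, it makes a single geometric observation: for every $\theta\in[\theta_\cov,\theta_L]$ the vectors $a-b$ and $c-b$ lie on opposite sides of the line with direction $\theta$, so $\nabla\Phi_\alpha(x)\cdot(b-a)$ and $\nabla\Phi_\alpha(x)\cdot(c-b)$ have the same sign throughout $\Delta$. This replaces your explicit sign bookkeeping by a picture. Second, the paper does not verify monotonicity along $\LINE(a,c)$ at all; once $\Phi_\alpha(a)\leq\Phi_\alpha(b)\leq\Phi_\alpha(c)$ is established, the intermediate value theorem on $\LINE(a,c)$ already gives $[\Phi_\alpha(a),\Phi_\alpha(b)]\subset\Phi_\alpha(\LINE(a,c))$, so your item (ii) is superfluous.
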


\begin{proof}
For every $\theta \in [\theta_{\cov}, \theta_L]$ the vectors $a-b$ and $c-b$ lie on opposite sides of the line with direction $\theta$. (This is by definition of $c$ and the fact that $\theta_L \in (\theta_\cov, \theta_\sma)$. See \Cref{figure:same sign}.) For every $x \in \Delta$, $\nabla \Phi_\alpha(x)$ is perpendicular to $\theta_\alpha(x) \in [\theta_\cov, \theta_L]$, so $\nabla \Phi_\alpha(x) \cdot (b-a)$ and $\nabla \Phi_\alpha(x) \cdot (c-b)$ have the same sign. Combining this observation with the continuity of $\Phi_\alpha$, we see that the following set of values
\[
\{ \nabla \Phi_\alpha(x) \cdot (b-a) : x\in \Delta\}
\cup
\{ \nabla \Phi_\alpha(x) \cdot (c-b) : x\in \Delta\}
\]
is either contained in $[0, \infty)$ or $(-\infty,0]$. 

\begin{figure}[h]
\centering
\begin{tikzpicture}
%arrows
\draw[thick, -] (-1,0)--(5,0);
\draw[thick, -] (0,0)--(2.5,1);
\draw[thick, -] (2.5,1)--(3,0);
\draw[thick, -] (3.8,-1.6)--(2.2,1.6);

\draw[dashed] (4.5,-1.1)--(1.5,1.1);

%points
\draw[fill, ultra thick](0,0)circle[radius=0.05];
\draw[fill, ultra thick](3,0)circle[radius=0.05];
\draw[fill, ultra thick](2.5,1)circle[radius=0.05];

%labels
\node[right] at (5,0) {$\theta_L$};
\node[below] at (3.8,-1.6) {$\theta_\cov$};
\node[below right] at (4.5,-1.1) {$\theta \in [\theta_{\cov}, \theta_L]$};

\node[below left] at (0,0) {$a$};
\node[below left] at (3,0) {$b$};
\node[above right] at (2.5,1.05) {$c$};

\end{tikzpicture}
\caption{For every $\theta \in [\theta_{\cov}, \theta_L]$ the vectors $a-b$ and $c-b$ lie on opposite sides of the line with direction $\theta$.}
\label{figure:same sign}
\end{figure}

Without loss of generality, assume the set is contained in $[0,\infty)$. Then the map $t \mapsto \Phi_\alpha((1-t)a+tb)$ is monotonic increasing on $[0,1]$. Thus, it suffices to show
\[
\Phi_\alpha(a)
\leq
\Phi_\alpha(b)
\leq
\Phi_\alpha(c),
\]
since the lemma would then follow from the intermediate value theorem. The first inequality ($\Phi_\alpha(a)\leq\Phi_\alpha(b)$) follows from monotonicity. For the second ($\Phi_\alpha(b)\leq\Phi_\alpha(c)$), we use the mean value theorem together with the assumption $\LINE(b,c) \subset \dom \Phi_\alpha$ to obtain $z \in \LINE(b,c)$ such that
\begin{align*}
\Phi_\alpha(b)
-
\Phi_\alpha(c)
=
\nabla \Phi_\alpha(z) \cdot (b-c)
\leq 0.
\end{align*}
The inequality above follows from the assumption that $\{ \nabla \Phi_\alpha(x) \cdot (c-b) : x\in \LINE(b,c)\} \subset [0,\infty)$.
\end{proof}
The assumption in Lemma \ref{lemma:rotation-cover-nonlinear} that $\Delta \subset \dom\Phi_\alpha$ is necessary to handle the domain issue discussed at the end of Section \ref{outlinesection}; this assumption avoids the problem shown in Figure \ref{E}.  
\subsection{Venetian blinds}

Given a line segment $L$, a parameter $N\in\N$, and directions $\theta_\cov$ and $\theta_\sma$ satisfying $\theta_L \in (\theta_{\cov}, \theta_{\sma})$, we define the corresponding set of Venetian blinds by dividing the segment into $N$ pieces and rotating:
\[
\VB(L, \theta_{\sma}, \theta_{\cov}, N)
=
\{
\ROTATE(L', \theta_{\sma}, \theta_{\cov}) : L' \in \DIVIDE(L, N)
\}
\]
Note that $\VB(\cdots)$ denotes a set of line segments. We denote the union of the segments by $\cup\VB(\cdots)$. If $S$ is a finite union of line segments, we let $|S|$ denote the total length of $S$.  Lemma \ref{lemma: ROTgeo} has an immediate corollary for Venetian blinds:
\begin{lem}[Basic geometry of Venetian blinds]
\label{lemma: VBgeo}
Let $L=\LINE(a,b)$, and let $\theta_\sma, \theta_\cov \in \R/\pi\Z$ satisfy $\theta_L \in (\theta_{\cov}, \theta_{\sma})$. Then:
\begin{enumerate}[label=(\alph*)]
\item The lengths of the blinds are related to the length of the original segment by
\[
|\cup\VB(L,\theta_\sma,\theta_\cov,N)|=\frac{\sin|\theta_L - \theta_{\cov}|}{\sin|\theta_{\sma} - \theta_{\cov}|}|L|
\]
\item Let $\delta > 0$. Then for $N$ sufficiently large (depending on $L, \theta_{\sma}, \theta_{\cov}, \delta$), we have
\[
\cup\VB(L,\theta_\sma,\theta_\cov,N)\subset \nbhd{L}{\delta}.
\]
\end{enumerate}

\end{lem}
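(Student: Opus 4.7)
The plan is to reduce both parts directly to the single-segment version, \Cref{lemma: ROTgeo}, applied to each of the $N$ equal sub-segments produced by $\DIVIDE(L,N)$. The key observation enabling this is that every $L' \in \DIVIDE(L,N)$ satisfies $\theta_{L'} = \theta_L$ and $|L'| = |L|/N$, so the bounds from \Cref{lemma: ROTgeo} are uniform across all sub-segments.

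For part (a), I would apply \Cref{lemma: ROTgeo}(a) to each $L' \in \DIVIDE(L,N)$ to obtain
\[
|\ROTATE(L',\theta_{\sma},\theta_{\cov})| = \frac{\sin|\theta_L - \theta_{\cov}|}{\sin|\theta_{\sma} - \theta_{\cov}|} \cdot \frac{|L|}{N},
\]
and then sum over the $N$ sub-segments, which gives exactly the stated identity. Before summing, I would briefly check that the blinds are pairwise disjoint so that the sum of lengths actually equals the length of the union: the blinds are all parallel (direction $\theta_{\sma}$), and consecutive blinds differ by translation along $\theta_L$; since $\theta_L \neq \theta_{\sma}$ (this is implicit in the hypothesis $\theta_L \in (\theta_{\cov},\theta_{\sma})$), no two blinds lie on the same line, hence they are disjoint.

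For part (b), I would apply \Cref{lemma: ROTgeo}(b) to each sub-segment $L'$ to obtain
\[
\ROTATE(L',\theta_{\sma},\theta_{\cov}) \subset \nbhd{L'}{\delta'}, \qquad \delta' = \frac{2\sin|\theta_{\sma} - \theta_L|}{\sin|\theta_{\sma} - \theta_{\cov}|} \cdot \frac{|L|}{N}.
\]
Since $L' \subset L$ gives $\nbhd{L'}{\delta'} \subset \nbhd{L}{\delta'}$, every blind is contained in $\nbhd{L}{\delta'}$. Because $\delta' \to 0$ as $N \to \infty$ (with $L,\theta_{\sma},\theta_{\cov}$ fixed), taking $N$ large enough to force $\delta' < \delta$ yields $\cup\VB(L,\theta_{\sma},\theta_{\cov},N) \subset \nbhd{L}{\delta}$.

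Neither step is a real obstacle: \Cref{lemma: ROTgeo} does the geometric work, and uniformity across sub-segments turns the single-segment bounds into the claimed identity and containment after a trivial sum or union. The only mild subtlety worth flagging is the disjointness check in part (a), which is where the hypothesis $\theta_L \neq \theta_{\sma}$ is quietly used.
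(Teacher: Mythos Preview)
Your proposal is correct and follows essentially the same approach as the paper: both parts are reduced to \Cref{lemma: ROTgeo} applied to each of the $N$ sub-segments, using $|L'|=|L|/N$ and $\theta_{L'}=\theta_L$. Your disjointness check in part (a) is a small addition the paper leaves implicit, but otherwise the arguments coincide.
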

\begin{proof}
To prove (a), we have $\cup\VB(L,\theta_\sma,\theta_\cov,N)=\bigcup_{L_i\in\DIVIDE(L,N)}\ROTATE(L_i,\theta_\sma,\theta_\cov)$.  Applying Lemma \ref{lemma: ROTgeo} to each $L_i$ individually (note that $|L_i|=|L|/N$) immediately establishes part (a). To prove part (b), we use \Cref{lemma: ROTgeo}(b) to see that 
\[
\cup\VB(L,\theta_\sma,\theta_\cov,N)  \subset \nbhd{L}{\delta'},
\qquad \text{ where } \delta' = \frac{2|L| \sin |\theta_\sma-\theta_L|}{N\sin|\theta_\sma-\theta_\cov|}.
\qedhere
\]
\end{proof}

\begin{lem}[Covering property for Venetian blinds]
\label{lemma:vb-cover-nonlinear}
Let $L=\LINE(a,b)$, and let $\theta_\sma, \theta_\cov \in \R/\pi\Z$ satisfy $\theta_L \in (\theta_{\cov}, \theta_{\sma})$. 
Let $A\subset \R$ be a compact set such that for every $\alpha\in A$ we have $L\subset\intdom \Phi_\alpha$.  Suppose also that for each $\alpha\in A$ and $x\in L$, we have $\theta_\alpha(x)\in (\theta_\cov,\theta_L)$.  Then, if $N$ is sufficiently large (depending on $L, \theta_\sma,\theta_\cov, A$ but uniform over $\alpha\in A$), we have
\begin{align*}
&\cup\VB(L, \theta_{\sma}, \theta_{\cov}, N)
\subset
\intdom\Phi_\alpha
\qquad\text{ for all $\alpha \in A$}
\\
&\Phi_\alpha(L)
\subset
\Phi_\alpha (\cup\VB(L, \theta_{\sma}, \theta_{\cov}, N))
\qquad\text{ for all $\alpha \in A$}.
\end{align*}
\end{lem}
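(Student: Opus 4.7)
The plan is to reduce the lemma to applying \Cref{lemma:rotation-cover-nonlinear} to each of the $N$ sub-segments produced by $\DIVIDE(L, N)$, while using compactness of $A$ to make every required estimate uniform in $\alpha$. The first containment will follow from \Cref{lemma: VBgeo}(b) once one knows that some fixed-width neighborhood of $L$ lies in $\intdom \Phi_\alpha$ for every $\alpha \in A$, and the covering statement will follow from the single-segment covering lemma piece-by-piece, provided one first establishes a uniformly thickened version of the hypothesis $\theta_\alpha(x) \in (\theta_\cov, \theta_L)$.

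First I would extract uniform versions of both hypotheses. Since $A$ is compact and $L \subset \intdom \Phi_\alpha$ for every $\alpha \in A$, the function $\alpha \mapsto d(L, \R^2 \setminus \intdom \Phi_\alpha)$ is continuous and strictly positive on $A$, so there exists $\rho > 0$ with $\nbhd{L}{\rho} \subset \intdom \Phi_\alpha$ for every $\alpha \in A$. Next, because $\theta_\alpha(x)$ is jointly continuous in $(\alpha, x)$ (from $f \in C^1$ and the explicit formula $\nabla \Phi_\alpha(x) = (-f'(\alpha - x_1), 1)$), the set $\{\theta_\alpha(x) : \alpha \in A, x \in L\}$ is a compact subset of the open arc $(\theta_\cov, \theta_L)$ and therefore lies in some closed sub-arc bounded away from both endpoints. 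Uniform continuity of $\theta_\alpha$ on $A \times \overline{\nbhd{L}{\rho}}$ then yields $\delta \in (0, \rho)$ such that $\theta_\alpha(x) \in [\theta_\cov, \theta_L]$ whenever $\alpha \in A$ and $x \in \nbhd{L}{\delta}$.

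With this $\delta$ fixed, \Cref{lemma: VBgeo}(b) provides $N_0$ so that $\cup\VB(L, \theta_\sma, \theta_\cov, N) \subset \nbhd{L}{\delta} \subset \intdom \Phi_\alpha$ for all $N \geq N_0$ and all $\alpha \in A$; this is the first claimed inclusion. For the covering statement, I would enumerate $\DIVIDE(L, N) = \{L_1, \ldots, L_N\}$ and let $\Delta_i$ be the convex hull of $L_i \cup \ROTATE(L_i, \theta_\sma, \theta_\cov)$. Applying \Cref{lemma: ROTgeo}(b) to each $L_i$ (which has length $|L|/N$) shows that, after enlarging $N_0$ if necessary, $\Delta_i \subset \nbhd{L}{\delta}$ for every $i$. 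Then, for each $\alpha \in A$ and each $i$, the triangle $\Delta_i$ lies in $\dom \Phi_\alpha$ and $\theta_\alpha(x) \in [\theta_\cov, \theta_L]$ for all $x \in \Delta_i$, so \Cref{lemma:rotation-cover-nonlinear} gives $\Phi_\alpha(L_i) \subset \Phi_\alpha(\ROTATE(L_i, \theta_\sma, \theta_\cov))$; taking the union over $i$ proves the second inclusion.

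The main obstacle is precisely the uniform control in the second paragraph: one must upgrade the pointwise assumption $\theta_\alpha(x) \in (\theta_\cov, \theta_L)$ on $L$ to a statement that holds on a fixed neighborhood of $L$ and simultaneously for every $\alpha \in A$. The strictness of the open arc in the hypothesis and the compactness of $A$ are both essential here: without strictness the slack needed to absorb perturbations could collapse to zero, and without compactness the required $\delta$ (and hence the threshold $N_0$) could not be chosen independently of $\alpha$. Everything else is a routine application of the earlier lemmas once this uniformity is in place.
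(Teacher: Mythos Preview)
Your proposal is correct and follows essentially the same approach as the paper: use compactness of $A$ and continuity of $\theta_\alpha(x)$ to obtain a uniform $\delta>0$ for which $\nbhd{L}{\delta}\subset\intdom\Phi_\alpha$ and $\theta_\alpha(x)\in[\theta_\cov,\theta_L]$ on $\nbhd{L}{\delta}$, then take $N$ large enough that each triangle $\Delta_i$ sits inside this neighborhood and apply \Cref{lemma:rotation-cover-nonlinear} segment-by-segment. Your write-up is in fact more detailed than the paper's, which compresses the compactness step into a single sentence.
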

\begin{proof}

By compactness and continuity, there exists a $\delta > 0$ (depending on $L, \theta_\cov, A$) such that 
\begin{gather*}
\text{$\nbhd{L}{\delta} \subset \intdom\Phi_\alpha$ for all $\alpha \in A$}
\\
\text{$\theta_\alpha(x) \in (\theta_{\cov}, \theta_L)$ for all $x \in \nbhd{L}{\delta}$ and all $\alpha \in A$}.
\end{gather*}
If $N$ is sufficiently large (depending on $L, \theta_\sma,\theta_\cov, \delta$), then for each $L' \in \DIVIDE(L, N)$, the convex hull of $L' \cup \ROTATE(L',\theta_\sma,\theta_\cov)$ is contained in $\nbhd{L}{\delta}$, so the hypotheses of Lemma \ref{lemma:rotation-cover-nonlinear} are met.  Applying Lemma \ref{lemma:rotation-cover-nonlinear} to each segment in $\DIVIDE(L, N)$ then gives the desired result.
\end{proof}

\subsection{Iterated Venetian blinds}
We will index segments in our iterated Venetian blind construction by $\N^*=\bigcup_{k=0}^\infty \N^k$, the set of finite sequences of non-negative integers.  We first establish some notation and terminology for this index set.  If $\ii = (i_1, \ldots, i_k) \in \N^k$, we say $\ii$ has length $k$ and write $|\ii| = k$.  The empty sequence is the unique sequence of length $0$, and is denoted $\emp$.  If $k=|\ii|<|\jj|$ and $i_l=j_l$ for all $l\leq k$, we say that $\jj$ is a descendant of $\ii$ and $\ii$ is an ancestor of $\jj$.  Furthermore, if $|\jj| = |\ii| + 1$, we say $\ii$ is the parent of $\jj$ and that $\jj$ is a child of $\ii$.  If $\ii=(i_1,\dots,i_k)$ and $\jj=(i_1,\dots,i_k,j)$ is a child of $\ii$, we will use the notation $\jj=(\ii,j)$.  A \textbf{finite tree} of depth $m$ is a finite set $\mathcal{T}\subset \N^*$ such that the following hold:
\begin{itemize}
\item $\varnothing\in\cT$
\item If $\ii\in \mathcal{T}$, all ancestors of $\ii$ are also in $\mathcal{T}$
\item A node $\ii\in \cT$ is a leaf (i.e., a node with no children) if and only if $|\ii| = m$.
\end{itemize}
If we have a fixed finite tree $\cT$, for each $\ii\in\mathcal{T}$, we let $N_\ii$ denote the number of children of $\ii$ which lie in $\mathcal{T}$. \\  %We will also assume that the leaves of our trees are all at the same level, which we denote $m$; thus, then $N_\ii=0$ if and only if $|\ii|=m$. \\

Let $L_\emp$ be a line segment with direction $\theta_0 \in \R/\pi\Z$. Let directions $\theta_\cov, \theta_\sma \in \R/\pi\Z$ satisfy $\theta_0 \in (\theta_{\cov},\theta_{\sma})$.
Let $m\in\N$, and let $\cT$ be a finite tree of depth $m$. For $k=1,\dots,m$, define:
\[
\theta_k = \theta_0 + \frac{k}{m}(\theta_{\sma}-\theta_0).
\]
(Here, $\theta_{\sma}-\theta_0$ is interpreted as a value in $[0,\pi)$.) We construct a finite collection of line segments recursively as follows.  Our recursion starts with the initial segment $L_\emp$ above.  Given $L_\ii$ for some $\ii\in\mathcal{T},|\ii|<m$, we define $L_{(\ii,j)}$ by 
\[
\{L_{(\ii,j)} : j = 1, \ldots, N_\ii\} 
=
\VB(L_\ii, \theta_{|\ii|+1}, \theta_{\cov}, N_\ii).
\]
We define iterated Venetian blind to be the set of segments:
\[
\IVB(L_\emp, \theta_{\sma}, \theta_{\cov},
\cT
)
=
\{ L_{\ii} : |\ii| = m \}.
\]

\begin{rmk}
\label{rmk:sufficiently large}
In the next three lemmas, we will consider iterated Venetian blinds of order we will need to make the segments of the iterated Venetian blind ``sufficiently fine'' in order to obtain certain properties. In the statements of the lemmas, when we say ``by recursively choosing the $N_\ii$ sufficiently large depending on $X$, we have $Y$'', we mean the following: For each $|\ii| < m$, there exists $N_\ii^* \in \N$, which can depend not only on $X$ but also (crucially) on $(N_\jj)_{|\jj| < |\ii|}$, such that if $N_\ii \geq N_\ii^*$ for all $\ii \in \cT$, then $Y$ holds.
\end{rmk}

\begin{lem}[Basic geometry of iterated Venetian blinds]
\label{lemma: IVBgeo}
Let $L_\emp$ be a line segment with direction $\theta_0 \in \R/\pi\Z$. Let directions $\theta_\cov, \theta_\sma \in \R/\pi\Z$ satisfy $\theta_0 \in (\theta_{\cov},\theta_{\sma})$.
Let $\cT$ be a finite tree of depth $m$.  Then:
\begin{enumerate}[label=(\alph*)]
\item For all $k=1,\dots,m$, we have
\[
\sum_{|\ii|=k}|L_\ii|\leq \max\left(1,\frac{\sin|\theta_0-\theta_\cov|}{\sin|\theta_\sma-\theta_\cov|}\right)|L_\emp|.
\]
\item Let $\delta > 0$. By recursively choosing the $N_\ii$ sufficiently large (depending on $L, \theta_{\cov}, \theta_{\sma}, m, \delta$) (see \Cref{rmk:sufficiently large}), we have that for every $\ii$,
\[
L_\ii\subset\nbhd{L_\emp}{\delta}.
\]

%For any $\delta>0$, there exists $N\in\N$ (depending only on $L, \theta_{\cov}, \theta_{\sma}, m, \delta$) with the following property: if every $N_\ii$ satisfies either $N_\ii\in\{0,1\}$ or $N_\ii>N$, then 

\end{enumerate}
\end{lem}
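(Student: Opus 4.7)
The plan is to prove part (a) via induction on $k$ using \Cref{lemma: VBgeo}(a), which yields a telescoping product, and part (b) via a recursive application of \Cref{lemma: VBgeo}(b) combined with the triangle inequality.

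For part (a), I first check by induction on $k$ that every segment $L_\ii$ at level $k = |\ii|$ has direction $\theta_k$: the base case is $L_\emp$ with direction $\theta_0$, and the inductive step follows because the output of $\VB(L_\ii, \theta_{|\ii|+1}, \theta_\cov, N_\ii)$ consists of $\ROTATE$d segments, each of which has direction $\theta_{|\ii|+1}$ by construction of $\ROTATE$. Applying \Cref{lemma: VBgeo}(a) with $\theta_L = \theta_k$ and $\theta_\sma = \theta_{k+1}$ to each $L_\ii$ at level $k$ and summing over all such $\ii$ yields the telescoping identity
\[
\sum_{|\ii| = k} |L_\ii| \;=\; \prod_{j=0}^{k-1} \frac{\sin|\theta_j - \theta_\cov|}{\sin|\theta_{j+1} - \theta_\cov|} \cdot |L_\emp| \;=\; \frac{\sin|\theta_0 - \theta_\cov|}{\sin|\theta_k - \theta_\cov|} |L_\emp|.
\]
It remains to show $\sin|\theta_k - \theta_\cov| \geq \min(\sin|\theta_0 - \theta_\cov|, \sin|\theta_\sma - \theta_\cov|)$. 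Lifting to $\R$ so that $\theta_\cov < \theta_0 < \theta_\sma$ with $\theta_\sma - \theta_\cov < \pi$, we have $\theta_k - \theta_\cov \in (0,\pi)$ for every $k$, and the identity $\sin(\pi - x) = \sin x$ gives $\sin|\theta_k - \theta_\cov|_{\R/\pi\Z} = \sin(\theta_k - \theta_\cov)$. Since $\theta_k - \theta_\cov$ is a convex combination of $\theta_0 - \theta_\cov$ and $\theta_\sma - \theta_\cov$ and $\sin$ is concave on $[0,\pi]$, its minimum over $k$ is attained at one of these endpoints, giving the claimed bound.

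For part (b), I set $\delta' = \delta/m$ and proceed recursively in $|\ii|$. Given $\ii$ with $|\ii| < m$ and having fixed $(N_\jj)_{|\jj| < |\ii|}$ (which determines the segment $L_\ii$), I apply \Cref{lemma: VBgeo}(b) to choose $N_\ii^*$ large enough that each child $L_{(\ii,j)}$ lies in $\nbhd{L_\ii}{\delta'}$. A straightforward induction on $k = |\ii|$ using the triangle inequality for neighborhoods (if $L_\ii \subset \nbhd{L_\emp}{k\delta'}$ and $L_{(\ii,j)} \subset \nbhd{L_\ii}{\delta'}$, then $L_{(\ii,j)} \subset \nbhd{L_\emp}{(k+1)\delta'}$) then yields $L_\ii \subset \nbhd{L_\emp}{k\delta'} \subset \nbhd{L_\emp}{\delta}$ for every $\ii \in \cT$. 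The nested dependence of $N_\ii^*$ on $(N_\jj)_{|\jj|<|\ii|}$ is exactly what \Cref{rmk:sufficiently large} permits.

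The most delicate point is the $\max(1, \cdot)$ factor in part (a): because $\theta_k - \theta_\cov$ can pass through $\pi/2$ as $k$ varies, the ratio $\sin|\theta_0 - \theta_\cov|/\sin|\theta_k - \theta_\cov|$ can exceed $1$, and simply bounding by $\sin|\theta_0 - \theta_\cov|/\sin|\theta_\sma - \theta_\cov|$ would fail when $\sin|\theta_\sma - \theta_\cov|$ is larger than $\sin|\theta_k - \theta_\cov|$; concavity of $\sin$ on $[0,\pi]$ is what closes the argument.
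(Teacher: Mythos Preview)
Your proof is correct and follows essentially the same approach as the paper: both obtain the telescoping identity $\sum_{|\ii|=k}|L_\ii|=\frac{\sin|\theta_0-\theta_\cov|}{\sin|\theta_k-\theta_\cov|}|L_\emp|$ from \Cref{lemma: VBgeo}(a) and then bound the ratio, and both prove (b) by recursively invoking \Cref{lemma: VBgeo}(b). Your concavity argument makes explicit what the paper calls ``elementary considerations,'' and your $\delta/m$ budget in (b) is a minor variant of the paper's compactness step (the paper instead uses that $L_\ii\subset\nbhd{L_\emp}{\delta}$ is open to find room for the next generation), but the underlying ideas are the same.
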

\begin{proof}
For (a), we apply part (a) of Lemma \ref{lemma: VBgeo} to get, for any $|\ii|=k$,
\[
\sum_{\substack{\jj \\ \jj \text{ a child of } \ii}} |L_\jj|=\frac{\sin|\theta_k-\theta_\cov|}{\sin|\theta_{k+1}-\theta_\cov|}|L_\ii|.
\]
Therefore,
\[
\sum_{|\jj|=k+1}|L_\jj|=\frac{\sin|\theta_k-\theta_\cov|}{\sin|\theta_{k+1}-\theta_\cov|}\sum_{|\ii|=k}|L_\ii|.
\]
By telescoping, for all $k=0,\dots,m$,
\begin{align}
\label{eq:sum L_i bound}
\sum_{|\ii|=k}|L_\ii|=\frac{\sin|\theta_0-\theta_\cov|}{\sin|\theta_k-\theta_\cov|}|L_\emp|.
\end{align}
By elementary considerations, it is easy to verify
\[
\frac{\sin|\theta_0-\theta_\cov|}{\sin|\theta_k-\theta_\cov|}\leq\max\left(1,\frac{\sin|\theta_0-\theta_\cov|}{\sin|\theta_\sma-\theta_\cov|}\right),
\]
so the proof of part (a) is complete.  

To prove part (b), let $\delta > 0$. We will show that by recursively choosing the $N_\ii$ sufficiently large depending on $L, \theta_{\cov}, \theta_{\sma}, m, \delta$ (see \Cref{rmk:sufficiently large} for the precise meaning), we have that for all $\ii \in \cT$,
\begin{align}
\label{eq:VB in nbhd}
\cup\VB(L_\ii, \theta_{|\ii|+1}, \theta_{\cov}, N_\ii)
\subset\nbhd{L_\emp}{\delta}.
\end{align} 
Let $k \in \{0,\ldots, m-1\}$. Suppose $(N_\ii)_{|\ii| < k}$ are sufficiently large so that \eqref{eq:VB in nbhd} holds for all $|\ii| < k$. Now fix some $\ii$ with $|\ii| = k$. We have $L_\ii \subset \intdom \Phi_\alpha$ for all $\alpha \in A$. (For $k=0$, this is just the statement $L_\emp \subset\nbhd{L_\emp}{\delta}$. For $k > 0$, this follows by applying  \eqref{eq:VB in nbhd} to the parent of $\ii$.) Thus, we can apply \Cref{lemma: VBgeo}(b) to $L_\ii$ to see that if $N_\ii$ is sufficiently large (depending on $L_\ii, \theta_{\sma}, \theta_{\cov},m, \delta$), then \eqref{eq:VB in nbhd} holds for our fixed $\ii$.

Since every segment $L_\ii$ except for $L_\emp$ arises in one of the Venetian blind constructions, this proves part (b).
% \alan{delete}
%
% let $N\in\N$ to be determined, and suppose that every $N_\ii$ satisfies either $N_\ii\in\{0,1\}$ or $N_\ii>N$. We apply part (b) of Lemma \ref{lemma: VBgeo} to conclude that if $|\ii|=k$ and $\jj$ is a child of $\ii$, then
% \[
% L_\jj\subset \nbhd{L_\ii}{\delta_k},
% \]
% where
% \[
% \delta_\ii
% =
% \frac{2|L_\ii|\sin|\theta_{k+1}-\theta_k|}{N_\ii \sin|\theta_{k+1}-\theta_\cov|}
% \leq
% \frac{1}{N} \frac{2\sin|\theta_{k+1}-\theta_k|}{\sin|\theta_{k+1}-\theta_\cov|}
% \frac{\sin|\theta_0-\theta_\cov|}{\sin|\theta_k-\theta_\cov|}|L_\emp|
% =: \frac{1}{N} s_k
% \]
% where we used \eqref{eq:sum L_i bound} in the inequality above. Given $\delta > 0$, if we choose $N$ large enough so that 
% $
% \frac{1}{N} \sum_{k=0}^{m-1} s_k \leq \delta,
% $
% then we have $L_\ii\subset \nbhd{L_\emp}{\delta}.
% $ for all $\ii \in \cT$.
\end{proof}

\begin{lem}[Covering property for iterated Venetian blinds]
\label{lemma:ivb-cover-nonlinear}
Let $L_\emp$ be a line segment with direction $\theta_0 \in \R/\pi\Z$. Let directions $\theta_\cov, \theta_\sma \in \R/\pi\Z$ satisfy $\theta_0 \in (\theta_{\cov},\theta_{\sma})$.
Let $A\subset \R$ be a compact set such that $L_\emp\subset \intdom \Phi_\alpha$ for every $\alpha\in A$.  Assume also that for each $\alpha\in A$ and $x\in L_\emp$, we have $\theta_\alpha(x)\in (\theta_\cov,\theta_0)$.  
Let $\cT$ be a finite tree of depth $m$. By recursively choosing the $N_\ii$ sufficiently large (depending on $L_\emp, \theta_{\cov},\theta_{\sma},m, A$), we have
\begin{align}
\label{eq:ivb cover}
\Phi_\alpha(L_\emp)
\subset
\Phi_\alpha (\cup\IVB(L_\emp, \theta_{\sma}, \theta_{\cov}, \cT))
\qquad\text{for all $\alpha\in A$}.
\end{align}
\end{lem}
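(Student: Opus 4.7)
My plan is to induct on the depth $k \in \{0, 1, \ldots, m\}$, establishing the stronger statement that for every $\alpha \in A$,
\[
\Phi_\alpha(L_\emp) \subset \bigcup_{|\ii|=k} \Phi_\alpha(L_\ii).
\]
The base case $k = 0$ is trivial, and the case $k = m$ is exactly \eqref{eq:ivb cover}. For the inductive step from $k$ to $k+1$, it suffices to show that for each $\ii$ with $|\ii| = k$ and each $\alpha \in A$,
\[
\Phi_\alpha(L_\ii) \subset \Phi_\alpha\Bigl(\bigcup_{\jj \text{ a child of } \ii} L_\jj\Bigr),
\]
since $\{L_\jj : \jj \text{ a child of } \ii\} = \VB(L_\ii, \theta_{k+1}, \theta_\cov, N_\ii)$. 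This is exactly the conclusion of the single-stage covering Lemma \ref{lemma:vb-cover-nonlinear} applied to $L_\ii$ (which has direction $\theta_k$) with $\theta_\sma$ replaced by $\theta_{k+1}$, provided the two hypotheses of that lemma hold: (i) $L_\ii \subset \intdom \Phi_\alpha$ for every $\alpha \in A$, and (ii) $\theta_\alpha(x) \in (\theta_\cov, \theta_k)$ for every $x \in L_\ii$ and $\alpha \in A$.

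To arrange (i) and (ii), I plan to work inside a fixed tube around $L_\emp$. By compactness of $A$ together with continuity of $\Phi_\alpha$ and $\theta_\alpha$ jointly in $(\alpha, x)$, the hypotheses $L_\emp \subset \intdom \Phi_\alpha$ and $\theta_\alpha(x) \in (\theta_\cov, \theta_0)$ on $L_\emp$ upgrade to uniform versions: there exists $\delta > 0$, depending only on $L_\emp, \theta_\cov, \theta_0, A$, such that for every $\alpha \in A$,
\[
\nbhd{L_\emp}{\delta} \subset \intdom \Phi_\alpha
\quad\text{and}\quad
\theta_\alpha(x) \in (\theta_\cov, \theta_0) \text{ for every } x \in \nbhd{L_\emp}{\delta}.
\]
Since $\theta_0 \in (\theta_\cov, \theta_k)$ (as the $\theta_j$ lie in the arc $(\theta_\cov, \theta_\sma)$ in their natural order), containment in $\nbhd{L_\emp}{\delta}$ immediately yields both (i) and (ii). Applying Lemma \ref{lemma: IVBgeo}(b) with this $\delta$ shows that by taking the $N_\ii$ sufficiently large in the recursive sense of Remark \ref{rmk:sufficiently large}, every $L_\ii$ lies in $\nbhd{L_\emp}{\delta}$.

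The main technical point is interleaving the two kinds of "sufficiently large" requirements on the $N_\ii$: one comes from Lemma \ref{lemma: IVBgeo}(b) (to keep $L_\ii$ inside the tube), and the other from Lemma \ref{lemma:vb-cover-nonlinear} itself (to achieve covering at the node $\ii$). I would handle this by first fixing $\delta$ as above, then processing the nodes of $\cT$ in order of increasing $|\ii|$: once the ancestors' parameters are chosen, $L_\ii$ is determined, and I can take $N_\ii$ large enough to satisfy both the geometric bound from Lemma \ref{lemma: IVBgeo}(b) and the covering bound from Lemma \ref{lemma:vb-cover-nonlinear} applied to $L_\ii$ over the compact parameter set $A$. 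Because $\cT$ is finite, this recursive selection terminates, and the resulting choice of all $N_\ii$ propagates the covering property from level $0$ down to level $m$ uniformly in $\alpha \in A$.
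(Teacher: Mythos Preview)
Your proposal is correct and follows essentially the same inductive strategy as the paper: apply the single-stage covering Lemma~\ref{lemma:vb-cover-nonlinear} at each level $k$ to propagate $\Phi_\alpha(L_\emp)\subset\bigcup_{|\ii|=k}\Phi_\alpha(L_\ii)$ down to $k=m$, choosing the $N_\ii$ recursively so that both the containment and covering conclusions of that lemma are available at every node. The only minor difference is that you fix one global tube $\nbhd{L_\emp}{\delta}$ at the outset (via compactness and Lemma~\ref{lemma: IVBgeo}(b)) to secure both the domain hypothesis $L_\ii\subset\intdom\Phi_\alpha$ and the direction hypothesis $\theta_\alpha(x)\in(\theta_\cov,\theta_k)$ simultaneously, whereas the paper carries the domain condition forward level by level from the output of Lemma~\ref{lemma:vb-cover-nonlinear}; your route has the virtue of making the verification of the direction hypothesis fully explicit.
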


\begin{proof}
First, we show that by recursively choosing the $N_\ii$ sufficiently large depending on $L_\emp, \theta_{\cov},\theta_{\sma},m, A$, we have that for all $\ii \in \cT$,
\begin{align}
\label{eq:ivb cover induction}
\begin{gathered}
\cup\VB(L_\ii, \theta_{|\ii|+1}, \theta_{\cov}, N_\ii)
\subset
\intdom\Phi_\alpha \qquad\text{for all $\alpha \in A$}
\\
\Phi_\alpha(L_\ii)
\subset
\Phi_\alpha (\cup\VB(L_\ii, \theta_{|\ii|+1}, \theta_{\cov}, N_\ii)) \qquad\text{for all $\alpha \in A$}
.
\end{gathered}
\end{align}
Let $k \in \{0,\ldots, m-1\}$. Suppose $(N_\ii)_{|\ii| < k}$ are sufficiently large so that \eqref{eq:ivb cover induction} holds for all $|\ii| < k$. Now fix some $\ii$ with $|\ii| = k$. We have $L_\ii \subset \intdom \Phi_\alpha$ for all $\alpha \in A$. (For $k=0$, this is a lemma hypothesis. For $k > 0$, this follows by applying the second condition of \eqref{eq:ivb cover induction} to the parent of $\ii$.) Thus, we can apply \Cref{lemma:vb-cover-nonlinear} to $L_\ii$ to see that if $N_\ii$ is sufficiently large (depending on $L_\ii, \theta_{\sma}, \theta_{\cov},m, A$), then \eqref{eq:ivb cover induction} holds for our fixed $\ii$.

Suppose all $N_\ii$ are chosen sufficiently large, as described above. Then for all $|\ii| < m$ and for all $\alpha \in A$, we have 
\[
\Phi_\alpha(L_\ii)\subset \bigcup_{\jj \text{ child of } \ii} \Phi_\alpha(L_\jj)
\]
By induction, this implies $\Phi_\alpha(L_\emp) \subset \bigcup_{|\jj|=m} \Phi_\alpha(L_\jj)$ for all $\alpha \in A$, which is precisely \eqref{eq:ivb cover}.
\end{proof}

\begin{lem}[Small projections in certain directions for iterated Venetian blinds]
\label{lemma:ivb-small-nonlinear-restricted-domain}
Let $\e>0$. Let $L_\emp = \LINE(a_\emp, b_\emp)$ be a line segment with direction $\theta_0 \in \R/\pi\Z$ and with length $|L_\emp| < \e$. Let directions $\theta_\cov, \theta_\sma \in \R/\pi\Z$ satisfy $\theta_0 \in (\theta_{\cov},\theta_{\sma})$.
Let $A\subset \R$ be a compact set such that $\theta_\alpha(a_\emp)\in[\theta_0,\theta_\sma]$ for all $\alpha\in A$. By choosing the depth $m$ sufficiently large and by recursively choosing the $N_\ii$ sufficiently large (depending on $L_\emp, \theta_{\cov},\theta_{\sma},m, \e$), we have
\[
|\Phi_\alpha(\cup\IVB(L_\emp,\theta_\sma,\theta_\cov,\cT) \cap \dom \Phi_\alpha)|\lesssim \max\left(1,\frac{\sin|\theta_0-\theta_\cov|}{\sin|\theta_\sma-\theta_\cov|}\right)\e|L_\emp|
\qquad \text{for all $\alpha\in A$.}
\]
%Moreover, for any $N\in\N$ the tree $\cT$ may be chosen so that for all $\ii\in\cT$, we have $N_\ii>N$ (except when $|\ii|=m$, in which case $N_\ii=0$). 
%(Note that the implicit constant above is not allowed to depend on $|L_\emp|$ or $\e$.)
\end{lem}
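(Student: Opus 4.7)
The plan is to use the standard ``critical level'' Venetian-blind argument: for each $\alpha \in A$, select a level $k^*=k^*(\alpha) \in \{0,1,\ldots,m\}$ whose associated direction $\theta_{k^*}$ best approximates $\theta_\alpha(a_\emp)$, then (i) bound the projections of the segments at this critical level using \Cref{lemma:segment-small-projection-nonlinear}, and (ii) observe that the level-$m$ segments lie in small neighborhoods of their level-$k^*$ ancestors, so their projections lie in small neighborhoods of the ancestors' projections.

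To carry out (i), first choose $m$ so large that $\tfrac{|\theta_\sma-\theta_0|}{2m} \leq \e$, so that for each $\alpha \in A$ there exists $k^*$ with $|\theta_{k^*}-\theta_\alpha(a_\emp)| \leq \e$. By \Cref{lemma: IVBgeo}(b), recursively choose the $N_\ii$ large enough that $L_\ii \subset \nbhd{L_\emp}{\delta}$ for some small $\delta > 0$. For $z \in L_\ii$ with $|\ii|=k^*$, the bound $|z-a_\emp| \leq |L_\emp|+\delta < \e+\delta$ together with uniform continuity of $(\alpha,z) \mapsto \theta_\alpha(z) = \arctan f'(\alpha-z_1)$ on a fixed compact set containing $A$ and a neighborhood of $L_\emp$ yields $\sup_{z \in L_\ii}|\theta_\alpha(z) - \theta_{k^*}| \lesssim \e$. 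Applying \Cref{lemma:segment-small-projection-nonlinear} and summing over $|\ii|=k^*$ using \eqref{eq:sum L_i bound},
\[
\sum_{|\ii|=k^*} |\Phi_\alpha(L_\ii \cap \dom \Phi_\alpha)| \;\lesssim\; \e\,\frac{\sin|\theta_0-\theta_\cov|}{\sin|\theta_{k^*}-\theta_\cov|}\,|L_\emp| \;\lesssim\; \e\, M\,|L_\emp|,
\]
where $M = \max\!\bigl(1,\, \sin|\theta_0-\theta_\cov|/\sin|\theta_\sma-\theta_\cov|\bigr)$.

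To carry out (ii), for each fixed $\ii$ with $|\ii|=k^*$, apply \Cref{lemma: IVBgeo}(b) to the subtree of depth $m-k^*$ rooted at $\ii$: recursively choose the $N_\jj$ (for $|\jj| \geq k^*$) so that each descendant leaf $L_\jj$ with $|\jj|=m$ lies in $\nbhd{L_\ii}{\delta'}$ for a prescribed small $\delta'>0$. Since $|\nabla \Phi_\alpha| \approx 1$, the map $\Phi_\alpha$ is Lipschitz with constant $\approx 1$, so $\Phi_\alpha(L_\jj \cap \dom \Phi_\alpha) \subset \nbhd{\Phi_\alpha(L_\ii \cap \dom \Phi_\alpha)}{C\delta'}$. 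Unioning over descendants within each group and then over groups,
\[
|\Phi_\alpha(\cup\IVB(L_\emp,\theta_\sma,\theta_\cov,\cT) \cap \dom \Phi_\alpha)| \;\leq\; \sum_{|\ii|=k^*}|\Phi_\alpha(L_\ii \cap \dom \Phi_\alpha)| + 2C\delta' \cdot \#\{|\ii|=k^*\}.
\]
The cardinality $\#\{|\ii|=k^*\}$ is determined by the $N_\jj$ chosen at levels $<k^*$, so by taking $\delta'$ (hence the $N_\jj$ at levels $\geq k^*$) small enough, the error term is absorbed into $\e M |L_\emp|$, giving the claimed bound.

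The main obstacle is establishing the bound $\sup_{z\in L_\ii}|\theta_\alpha(z)-\theta_{k^*}| \lesssim \e$: since $f \in C^1$ only guarantees $f'$ continuous (not Lipschitz), the bound $|\theta_\alpha(z)-\theta_\alpha(a_\emp)| \leq \omega_{f'}(|L_\emp|+\delta)$, where $\omega_{f'}$ is the modulus of continuity of $f'$, is not automatically linear in $\e$. The constant in the conclusion's $\lesssim$ must therefore absorb an $f$-dependent factor (consistent with the paper's convention that constants in $\lesssim$ may depend on $f$), and the relative scales of $m$, $\delta$, and $\delta'$ must be coordinated carefully so that the contributions of the angular spacing $\tfrac{|\theta_\sma-\theta_0|}{2m}$, the modulus $\omega_{f'}(|L_\emp|+\delta)$, and the Lipschitz noise $2C\delta' \cdot \#\{|\ii|=k^*\}$ all fit inside $\e M |L_\emp|$. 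A secondary technical point is the restriction to $\dom \Phi_\alpha$, a vertical strip: unlike in \Cref{lemma:ivb-cover-nonlinear}, we do not assume $L_\emp \subset \intdom\Phi_\alpha$, so we must work with the intersections $L_\ii \cap \dom\Phi_\alpha$ throughout, using that \Cref{lemma:segment-small-projection-nonlinear} is stated precisely in this form.
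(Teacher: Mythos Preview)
Your strategy---pick a critical level $k^*(\alpha)$, bound the projections there via \Cref{lemma:segment-small-projection-nonlinear}, and then show the leaves cluster near their level-$k^*$ ancestors---is exactly the paper's. The genuine gap is in step~(ii). You write ``for each fixed $\ii$ with $|\ii|=k^*$, \ldots\ recursively choose the $N_\jj$ (for $|\jj|\ge k^*$) \ldots\ so that the error $2C\delta'\cdot\#\{|\ii|=k^*\}$ is absorbed,'' but $k^*=k^*(\alpha)$ ranges over \emph{all} of $\{0,\ldots,m\}$ as $\alpha$ varies over $A$, while a single tree $\cT$ must serve every $\alpha$. Thus the containment $L_\jj\subset\nbhd{L_\ii}{\delta'}$ (leaves near their level-$k$ ancestor) and the error budget must hold simultaneously for \emph{every} level $k$. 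You have not explained how to interleave the choices of the scales $\delta'=\delta'_k$ with the $N_\ii$ so that this works: $\delta'_k$ must shrink with $\#\{|\ii|=k\}$, which in turn grows with the $(N_\jj)_{|\jj|<k}$, yet each $N_\jj$ must also be large enough to achieve the containments required at all deeper levels.

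The paper cuts this knot by working node-by-node rather than level-by-level: to each node $\ii$ it attaches a radius $\delta_\ii>0$ (chosen \emph{after} $L_\ii$ is determined) satisfying (a) $|\Phi_\alpha(\nbhd{L_\ii}{\delta_\ii}\cap\dom\Phi_\alpha)|\lesssim\e|L_\ii|$ for all $\alpha\in A_{|\ii|}$ (this exists by compactness and continuity, since the corresponding bound already holds for $L_\ii$ itself), and (b) the nesting $\nbhd{L_\ii}{\delta_\ii}\subset\nbhd{L_{\ii'}}{\delta_{\ii'}}$ into the parent's neighborhood. Then $N_\ii$ is chosen large enough that the children of $L_\ii$ lie in $\nbhd{L_\ii}{\delta_\ii}$. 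The nesting forces $L_\jj\subset\nbhd{L_\ii}{\delta_\ii}$ for \emph{every} ancestor $\ii$ of $\jj$, so whichever $k^*$ is relevant for a given $\alpha$, the bound is already in place. Bounding the projection of the neighborhood directly (rather than bounding $|\Phi_\alpha(L_\ii\cap\dom\Phi_\alpha)|$ and then adding a Lipschitz error $C\delta'$) also sidesteps your cardinality term and the domain subtlety you flag: note that your claimed inclusion $\Phi_\alpha(L_\jj\cap\dom\Phi_\alpha)\subset\nbhd{\Phi_\alpha(L_\ii\cap\dom\Phi_\alpha)}{C\delta'}$ is not obviously true, since a point of $L_\jj\cap\dom\Phi_\alpha$ within $\delta'$ of $L_\ii$ need not be within $\delta'$ of $L_\ii\cap\dom\Phi_\alpha$, whereas the paper only needs the trivially valid inclusion $\Phi_\alpha(L_\jj\cap\dom\Phi_\alpha)\subset\Phi_\alpha(\nbhd{L_\ii}{\delta_\ii}\cap\dom\Phi_\alpha)$.
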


\begin{proof}
 Let $m\in\N$ be such that $\frac{1}{m}(\theta_{\sma}-\theta_0) < \e$. Let $\theta_k=\theta_0+\frac{k}{m}(\theta_\sma-\theta_0)$, as in the definition of $\IVB$.  Define
\[
A_k = \{\alpha \in A : |\theta_\alpha(a_\varnothing) - \theta_k| \leq \e\}.
\]
%It follows from the monotonicity of $\theta_\alpha(a_\emp)$ that this is indeed an interval. \alan{why do we need this to be an interval?}
%, and it follows from the transversality condition $|\partial_\alpha \theta_\alpha(a_\emp)|\approx 1$ that it has width $\approx \e$, so part (a) of the Lemma is satisfied.  
note that $A = \bigcup_{k=0}^m A_k$. We record for later use that for all $\alpha \in A_k$ and all $x \in B(a_\varnothing, 2\e) \cap \dom \Phi_\alpha$,
\begin{align}
\label{eq:theta alpha x theta k}
|\theta_\alpha(x) - \theta_k| 
\leq 
|\theta_\alpha(x) - \theta_\alpha(a_\varnothing)|
+
|\theta_\alpha(a_\varnothing) - \theta_k|
\lesssim\e
.
\end{align}

% We now construct our tree $\cT$. 
% %to satisfy part (b).  
% To specify a tree $\cT$ with maximum length $m$, it is enough to define $N_\ii$ for each $|\ii|<m$.  \\ %\alan{where is the lower bound used?} (the term in the middle is precisely $|\theta_{k+1}-\theta_k|$, so this condition says the angle increment is $\approx \e$).  \\

\ 

In what follows, let $\ii'$ denote the parent of $\ii$ (if it exists). Now, we show that by recursively choosing $N_\ii$ sufficiently large depending on $L_\emp, \theta_{\cov},\theta_{\sma},m, \e$ and choosing the numbers $\delta_\ii > 0$ appropriately, we have that the following holds for all $\ii \in \cT$:
\begin{gather}
\label{eq:phi-nbhd-small}
|\Phi_\alpha(\nbhd{L_\ii}{\delta_\ii}\cap \dom \Phi_\alpha)| 
\lesssim
\e |L_\ii| \text{ for all $\alpha \in A_{|\ii|}$},
\\
\label{eq:nbhd-in-S_i}
\nbhd{L_\ii}{\delta_\ii}
\text{ is a subset of } 
\begin{cases}
B(a_\varnothing, 2\e)
&\text{ if } \ii = \emp
\\
\nbhd{L_{\ii'}}{\delta_{\ii'}} &\text{ if $\ii \neq \emp$}
\end{cases}
\\
\label{eq:VB L_j subset nbhd L_j}
\cup \VB(L_\ii, \theta_{|\ii|+1}, \theta_{\cov}, N_\ii) \subset \nbhd{L_\ii}{\delta_\ii}.
\end{gather}
Let $k \in \{0,\ldots, m-1\}$. Suppose that the numbers $\delta_\ii > 0$ have been chosen for all $|\ii| < k$, and that $(N_\ii)_{|\ii| < k}$ are sufficiently large so that \eqref{eq:phi-nbhd-small}, \eqref{eq:nbhd-in-S_i}, \eqref{eq:VB L_j subset nbhd L_j} hold for all $|\ii| < k$.

Fix some $\ii$ with $|\ii| = k$.  We observe the following:
\begin{itemize}
\item If $\ii = \emp$, then $L_\ii \subset B(a_\varnothing, 2\e)$. (This follows from the lemma hypotheses.)
\item If $\ii \neq \emp$, then we have $L_\ii \subset \nbhd{L_{\ii'}}{\delta_{\ii'}} \subset B(a_\varnothing, 2\e)$. (The first inclusion follows from \eqref{eq:VB L_j subset nbhd L_j} applied to $\ii'$. The second inclusion follows from \eqref{eq:nbhd-in-S_i} applied inductively to all ancestors of $\ii$.)
\item We have $|\Phi_\alpha(L_\ii\cap \dom \Phi_\alpha)| 
\lesssim
\e |L_\ii|$ for all $\alpha \in A_{|\ii|}$.
(This follows from \eqref{eq:theta alpha x theta k} and Lemma \ref{lemma:segment-small-projection-nonlinear}.)
\end{itemize}
By the above observations, together with compactness and continuity, there exists $\delta_\ii > 0$ (depending on $L_\ii, L_{\ii'}, \delta_{\ii'}, \e$) such that \eqref{eq:phi-nbhd-small} and \eqref{eq:nbhd-in-S_i} hold for our fixed $\ii$. Then by \Cref{lemma: VBgeo}(b), by choosing $N_\ii$ sufficiently large (depending on $L_\ii,\theta_{|\ii|+1},\theta_\cov,\delta_\ii$), we have \eqref{eq:VB L_j subset nbhd L_j} for 
our fixed $\ii$.

\ 

Suppose $N_\ii$ are recursively chosen sufficiently large as described above, and $\delta_\ii$ is as described above, for all $\ii \in \cT$. Then by the nested property \eqref{eq:nbhd-in-S_i},
\begin{align}
\label{eq:Li-subset-nbhd-Lj}
L_\jj \subset \nbhd{L_\ii}{\delta_\ii} \text{ for all ancestors $\ii$ of $\jj$}.
\end{align}
%In particular, we have $L_\jj\subset B(a_\varnothing, 2\e)$.
To prove the claimed bound on the measure of the projection, let $\alpha\in A$ and choose $k$ such that $\alpha\in A_k$.  Then, by \eqref{eq:Li-subset-nbhd-Lj}, \eqref{eq:phi-nbhd-small}, and Lemma \ref{lemma: IVBgeo}(a) we have
\begin{align*}
|\Phi_\alpha(\cup \IVB(L_\emp,\theta_\sma,\theta_\cov,\cT)\cap \dom \Phi_\alpha)| &=|\Phi_\alpha(\bigcup_{|\ii| = m} L_\ii\cap \dom \Phi_\alpha)| \\
&\leq \sum_{|\ii|=k}|\Phi_\alpha(\nbhd{L_\ii}{\delta_\ii}\cap \dom \Phi_\alpha)| \\
&\lesssim \e\sum_{|\ii|=k}|L_\ii| \\
&\lesssim \max\left(1,\frac{\sin|\theta_0-\theta_\cov|}{\sin|\theta_\sma-\theta_\cov|}\right) \e|L_\emp|.
\end{align*}
\end{proof}

\begin{rmk}
\label{remark:mod-pi-and-direction}
In the lemmas, the Venetian blinds always rotated counterclockwise. This was only to simplify notation. We could also rotate clockwise, in which case we would have assumptions such as $\theta_0 \in (\theta_{\sma}, \theta_{\cov})$ instead.
\end{rmk}

%Therefore,
%\begin{align*}
%|\Phi_\alpha(\cup \IVB)| 
%&=
%|\Phi_\alpha(\bigcup_{|\ii| = m} L_\ii)| 
%\\
%&\labelrel{eq:Li-subset-nbhd-Lj}\leq
%\sum_{|\ii| = k} |\Phi_\alpha(\nbhd{L_\ii}{\delta_\ii}) | 
%\\
%&\labelrel{eq:phi-nbhd-small}<
%C_{\mathrm{reg}} (2C_{\mathrm{reg}} +1)\e' 
%\sum_{|\ii| = k}
%\ell_\ii
%\\
%&\labelrel{eq:law-of-sines-ivb}\leq
%C_{\mathrm{reg}} (2C_{\mathrm{reg}} +1) 
%\max\left(
%1,
%\frac{\sin(\theta_0 - \theta_{\cov})}{\sin(\theta_{\sma} - \theta_{\cov})}
%\right) \e'
%\ell_\varnothing
%\end{align*}
%To complete the proof of this case, let
%\[
%\e'
%=
%\frac{\e}{C_{\mathrm{reg}}(2C_{\mathrm{reg}}+1)\max\left(
%1,
%\frac{\sin(\theta_0 - \theta_{\cov})}{\sin(\theta_{\sma} - \theta_{\cov})}
%\right)}.
%\]

%
%
%
%
%
%
%
%
%
%
%

\section{Key lemma for prescribed projections}
\label{KeyLemmaSection}
The goal of this section is to prove the following key lemma.  The conclusion is similar to our prescribed projection result, with less freedom to choose prescriptions and with error of small positive measure rather than measure zero.  It is the basis for the proof of our main theorem.
\begin{lem}[Key lemma]
\label{lemma: keylemma}
Let $y\in\{\alpha_0\}\times \R$, and let $A_\sma,A_\cov$ be disjoint compact subsets of $\R$ such that $\alpha_0\in A_\sma$ and $A_\cov$ is an interval.  Let $\gamma$ be a compact and connected subset of $\Phi_{\alpha_0}^{-1}(y)=y-\Gamma$ such that $\gamma\subset \intdom \Phi_\alpha$ for every $\alpha\in A_\cov$.  For any $\e,\delta>0$, there exists $E\subset \nbhd{\gamma}{\delta}$ satisfying:
\begin{enumerate}[label=(\alph*)]
\item\label{item:keylemma finite union segments} 
$E$ is a finite union of line segments.
\item\label{item:keylemma cover} 
$\Phi_\alpha(E)\supset \Phi_\alpha(\gamma)$ for every $\alpha\in A_{\cov}$,
\item\label{item:keylemma small} 
$|\Phi_\alpha(E \cap \dom\Phi_\alpha)| < \e$ for every $\alpha\in A_{\sma}$. %\alan{It made sense to have $\lesssim$ in the original version of Lemma 3.1, but in this version, I think it's better to have $<$ since this is the only occurrence of $\e$.}
\end{enumerate}
\end{lem}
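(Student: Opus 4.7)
The plan is to approximate $\gamma$ by a polygonal path and then apply the iterated Venetian blind machinery of \Cref{VenetianBlindSection} to each piece, with parameters chosen to reconcile the covering requirement on $A_\cov$ with the small-projection requirement on $A_\sma$. Since $\gamma$ is a compact connected arc of the $C^1$ curve $y-\Gamma$, it admits, for any $\eta>0$, a polygonal approximation $\gamma'=L_1\cup\cdots\cup L_n$ with the endpoints of each $L_j$ lying on $\gamma$, consecutive segments sharing endpoints, $|L_j|<\eta$, and $\gamma'\subset\nbhd{\gamma}{\eta}$. By uniform continuity of $f'$, the direction $\theta_{L_j}$ approximates $\theta_{\alpha_0}(x)$ for every $x\in L_j$, and by continuity of $(\alpha,x)\mapsto\theta_\alpha(x)$ we also uniformly control $\theta_\alpha(x)$ for $(\alpha,x)$ near $(\alpha_0,L_j)$.

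Fix $j$, set $\theta_0=\theta_{L_j}$, and put $\e'=\e/(C|\gamma|)$ for a suitable constant $C$. Because $A_\sma,A_\cov$ are disjoint compact subsets of $\R$, we have $d(A_\sma,A_\cov)>0$; because $A_\cov$ is an interval and $\alpha_0\in A_\sma$, all of $A_\cov$ lies on one side of $\alpha_0$. Using that $\alpha\mapsto\theta_\alpha(x)$ is continuous and strictly monotone (since $f'$ is strictly monotone), this $\alpha$-separation translates into an angular separation of the corresponding direction sets. I choose $\theta_\cov^{(j)}$ and $\theta_\sma^{(j)}$ so that $\theta_0\in(\theta_\cov^{(j)},\theta_\sma^{(j)})$, the range $\{\theta_\alpha(x):\alpha\in A_\cov,\,x\in\nbhd{L_j}{\eta}\}$ lies strictly in $(\theta_\cov^{(j)},\theta_0)$, and $\theta_\sma^{(j)}$ sits on the opposite side of $\theta_0$, just past the $A_\sma$ directions on that side. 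Applying \Cref{lemma:ivb-cover-nonlinear}, \Cref{lemma:ivb-small-nonlinear-restricted-domain}, and \Cref{lemma: IVBgeo}(b) with sufficient depth and children counts then yields a finite union of segments $V_j\subset\nbhd{\gamma}{\delta}$ such that $\Phi_\alpha(V_j)\supset\Phi_\alpha(L_j)$ for every $\alpha\in A_\cov$ and $|\Phi_\alpha(V_j)|<\e'|L_j|$ for every $\alpha\in A_\sma$ whose direction lies on the ``sma'' side of $\theta_0$. For the $A_\sma$ directions on the ``cov'' side of $\theta_0$ (i.e., those $\alpha$ past $A_\cov$, if any), the gap $d(A_\sma,A_\cov)>0$ provides the angular room needed to perform a second, finer Venetian blind step on top of the first, rotating in the opposite direction at a spatial scale chosen small enough (via \Cref{lemma: IVBgeo}(b)) that the new segments lie in tight neighborhoods of the first-stage segments. \Cref{lemma:rotation-cover-nonlinear} then ensures the $A_\cov$ covering survives while the newly oriented segments achieve small projection for the remaining $A_\sma$ directions.

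Setting $E=\bigcup_j V_j$, condition \ref{item:keylemma finite union segments} is immediate. For \ref{item:keylemma cover}, $\Phi_\alpha|_\gamma$ is strictly monotone when $\alpha\neq\alpha_0$ (its derivative in the arc parameter $t$ is $f'(t+\alpha-\alpha_0)-f'(t)$, which has constant sign by strict monotonicity of $f'$), so chaining consecutive pieces together with the intermediate value theorem gives $\bigcup_j\Phi_\alpha(L_j)\supset\Phi_\alpha(\gamma)$, and hence $\Phi_\alpha(E)\supset\Phi_\alpha(\gamma)$ for every $\alpha\in A_\cov$. For \ref{item:keylemma small}, $|\Phi_\alpha(E\cap\dom\Phi_\alpha)|\leq\sum_j|\Phi_\alpha(V_j\cap\dom\Phi_\alpha)|<\e'\sum_j|L_j|\lesssim\e'|\gamma|<\e$. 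The main technical hurdle is the two-sided construction at the end of the second paragraph: ensuring that both Venetian blind stages coexist, with covering for $A_\cov$ preserved while small projections hold for $A_\sma$ directions on both sides of $\theta_0$. The positive separation $d(A_\sma,A_\cov)>0$ coming from compactness is what supplies the angular buffer needed to place $\theta_\cov^{(j)}$ between the $A_\cov$ and $A_\sma$ direction ranges on the shared side.
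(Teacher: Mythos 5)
Your overall architecture is the same as the paper's: approximate $\gamma$ by a short-segment polygon, run a two-stage Venetian blind construction on each segment using the angular gap supplied by $d(A_\sma,A_\cov)>0$, and sum the per-segment estimates. (The paper factors the per-segment work into a separate local lemma and uses a circumscribed tangent polygon rather than inscribed chords, but your IVT/monotonicity argument for the covering of $\gamma$ by the chord polygon is fine for $\alpha\in A_\cov$, where $\gamma\subset\intdom\Phi_\alpha$.)

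The gap is in the two-stage angular bookkeeping, which is the heart of the lemma. Since $\alpha_0\in A_\sma$ and $\alpha\mapsto\theta_\alpha(x)$ is monotone, the $A_\sma$ directions can occur in three groups relative to $\theta_0$ and the $A_\cov$ direction arc: (i) on the opposite side of $\theta_0$ from the $A_\cov$ directions (for $\alpha\in A_\sma$ on the far side of $\alpha_0$ from $A_\cov$); (ii) strictly between $\theta_0$ and the $A_\cov$ arc (for $\alpha\in A_\sma$ between $\alpha_0$ and $A_\cov$); and (iii) beyond the $A_\cov$ arc (for $\alpha\in A_\sma$ past $A_\cov$). \Cref{lemma:ivb-small-nonlinear-restricted-domain} only gives smallness for directions in the swept arc $[\theta_0,\theta_\sma^{(j)}]$, which has $\theta_0$ as an \emph{endpoint}, so your stage 1 handles at most one of groups (i) and (ii), and never group (iii); yet your stage 2 is described as treating only ``those $\alpha$ past $A_\cov$.'' Moreover, stage 2 cannot simply sweep from the stage-1 leaf direction toward the group-(iii) directions the short way around: that sweep arc would pass through the $A_\cov$ direction arc, violating the hypothesis $\theta_\alpha(x)\in(\theta_\cov,\theta_0)$ of \Cref{lemma:ivb-cover-nonlinear} and destroying the covering. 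It must go the long way around $\R/\pi\Z$, at which point it re-traverses everything stage 1 did. The paper's resolution is exactly this observation made systematic: all $A_\sma$ directions lie in a \emph{single} closed arc $[\theta_\sma^-,\theta_\sma^+]$ of the circle $\R/\pi\Z$, disjoint from the $A_\cov$ arc $[\theta_\cov^-,\theta_\cov^+]$ and containing $\theta_L$ in its interior; one first reorients the segment to the endpoint $\theta_\sma^-$ by a \emph{single-level} $\VB$ whose covering arc $(\theta_L,\theta_\cov^+)$ contains the $A_\cov$ arc, and then runs \emph{one} $\IVB$ sweeping from $\theta_\sma^-$ all the way to $\theta_\sma^+$, whose covering arc $(\theta_\cov^-,\theta_\sma^-)$ again contains the $A_\cov$ arc. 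You should restructure your second paragraph along these lines; as written, the claim that the two stages coexist is asserted rather than arranged. Two smaller points: the angular parameters and hence the implied constants in $|\Phi_\alpha(V_j)|\lesssim\e'|L_j|$ must be chosen uniformly from $A_\sma$, $A_\cov$, and a fixed compact neighborhood of $\gamma$ (not per segment $j$), and for $\alpha\in A_\sma$ with $L_j$ only partially inside $\dom\Phi_\alpha$ you need to restrict all direction sets to $x\in\dom\Phi_\alpha$, as the paper does in \eqref{eq:claim dist directions positive}.
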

There are two main steps.  First, we approximate the curve $\gamma$ by a polygonal curve, then we apply the iterated Venetian blind construction to each segment of the polygonal curve.  Recall that $\Gamma$ denotes the curve associated to our projections, and the pre-image of a point is given by $\Phi_{\alpha_0}^{-1}(y)=y-\Gamma$.

\begin{lem}[Polygon approximation lemma]
\label{lemma: polygon-approximation}
Let $y\in \R^2$, and let $\gamma$ be a compact and connected subset of $y-\Gamma$.  For any $\e, \delta>0$ , there exists a polygonal curve $P$ such that 
\begin{enumerate}[label=(\alph*)]
\item $P\subset\nbhd{\gamma}{\delta}$
\item For every $\alpha\in\R$, we have $\Phi_\alpha(P)\supset \Phi_\alpha(\gamma)$
\item Each segment of $P$ is tangent to $\gamma$ and has length $< \e$.
\end{enumerate}
\end{lem}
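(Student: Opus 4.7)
The plan is to build $P$ by taking tangent lines to $\gamma$ at a fine partition and chaining consecutive tangents at their intersection points. Parametrize $\gamma(t) = y - (t, f(t))$ for $t \in [t_1, t_2] \subset [a,b]$, fix a partition $t_1 = s_0 < s_1 < \cdots < s_n = t_2$ of mesh at most $\eta > 0$ (to be chosen small), let $L_i$ denote the tangent line to $\gamma$ at $q_i := \gamma(s_i)$, and set $r_i := L_{i-1} \cap L_i$. The slopes $f'(s_i)$ are pairwise distinct by the strict monotonicity of $f'$, so each $r_i$ is a single point. Take $P$ to be the polygon with successive vertices $q_0, r_1, r_2, \ldots, r_n, q_n$; each of its segments lies on some $L_i$ and is therefore tangent to $\gamma$, and the endpoints of $P$ are $q_0, q_n$---the same as those of $\gamma$.

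Properties (a) and (c) follow from the uniform continuity of $f'$ on $[a,b]$. As $\eta \to 0$, consecutive tangents have nearly equal slopes, so each $r_i$ converges to $\gamma$ (and in fact lies horizontally between $q_{i-1}$ and $q_i$), and each tangent segment shrinks. For $\eta$ sufficiently small, every segment has length less than $\e$ (giving (c)) and the entire polygon lies in $\nbhd{\gamma}{\delta}$ (giving (a)).

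The heart of the proof is (b), which I plan to establish by combining monotonicity of $\Phi_\alpha \circ \gamma$ with the intermediate value theorem. A direct computation gives
\[
\Phi_\alpha(\gamma(t)) = y_2 - f(t) + f(\alpha - y_1 + t),
\qquad \tfrac{d}{dt}\Phi_\alpha(\gamma(t)) = f'(\alpha - y_1 + t) - f'(t),
\]
which has constant sign on $[t_1,t_2]$ by the strict monotonicity of $f'$ (positive if $\alpha > y_1$, negative if $\alpha < y_1$, zero if $\alpha = y_1$). Hence $\Phi_\alpha \circ \gamma$ is monotone in $t$, so $\Phi_\alpha(\gamma)$ is precisely the closed interval between $\Phi_\alpha(q_0)$ and $\Phi_\alpha(q_n)$. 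Because $P$ and $\gamma$ share the same horizontal range $[y_1-t_2, y_1-t_1]$, we have $P \subset \dom \Phi_\alpha$ whenever $\gamma \subset \dom \Phi_\alpha$, and the continuous path $\Phi_\alpha \circ P$ takes the same endpoint values $\Phi_\alpha(q_0), \Phi_\alpha(q_n)$. The intermediate value theorem then gives $\Phi_\alpha(P) \supset \Phi_\alpha(\gamma)$.

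The main obstacle I anticipate is handling those $\alpha$ for which $\gamma$ extends outside $\dom \Phi_\alpha$, so that the clipped arcs $\gamma \cap \dom \Phi_\alpha$ and $P \cap \dom \Phi_\alpha$ no longer share endpoints and the naive IVT falls short. Both clipped arcs remain connected, since $\gamma$ and $P$ are graphs over the same $x_1$-interval and the vertical strip $\dom \Phi_\alpha$ cuts them at the same two horizontal values. To resolve this, I would restrict the monotonicity argument to the sub-interval $J_\alpha = \{t : \gamma(t) \in \dom \Phi_\alpha\}$ and exploit the one-sided position of the tangent polygon relative to the convex or concave curve $\gamma$ (itself forced by the strict monotonicity of $f'$) to show that the clipped projection $\Phi_\alpha(P \cap \dom \Phi_\alpha)$ still covers $\Phi_\alpha(\gamma|_{J_\alpha})$ on the domain boundary. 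This is the most delicate step.
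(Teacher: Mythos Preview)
Your construction is identical to the paper's tangent-chord polygon, and (a), (c) are handled the same way. For (b) the paper argues pointwise: for each $z\in\Phi_\alpha(\gamma)$ it shows the sub-arc $z+\gamma\subset\Phi_\alpha^{-1}(z)$ meets $P$, via a monotone difference function $d(t)$ and a case split. You argue globally via the IVT on $\Phi_\alpha\circ P$, using that $\Phi_\alpha\circ\gamma$ is monotone so $\Phi_\alpha(\gamma)$ is the interval between $\Phi_\alpha(q_0)$ and $\Phi_\alpha(q_n)$. These rest on the same fact (the paper's $d(t)$ is just $\Phi_\alpha(\gamma(t))-z_2$), so the difference is organizational; your packaging is arguably cleaner in the interior case.

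Your boundary sketch is correct and corresponds to the paper's Case~2. Two points worth recording when you write it out: (i) since the horizontal range of $\gamma$ has width $t_2-t_1\le b-a$, which equals the width of $\dom\Phi_\alpha$, at most one end of $\gamma$ can be clipped; (ii) which end is clipped is correlated with the sign of $\alpha-y_1$ (e.g.\ $t_1'>t_1$ forces $\alpha<y_1$), and this sign simultaneously fixes the monotonicity direction of $\Phi_\alpha\circ\gamma$. The concavity/convexity of $\gamma$ (as a graph over $x_1$) then places the clipped endpoint $p^*$ of $P$ on exactly the vertical side of $\gamma(t_1')$ needed so that $\Phi_\alpha(p^*)$ overshoots $\Phi_\alpha(\gamma(t_1'))$ in the right direction, and IVT on the sub-path from $p^*$ to the surviving endpoint $q_n$ (or $q_0$) finishes it.
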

\begin{proof}
By translational symmetry, we may assume $y=0$. Let $g(t)=-f(-t)$, and let $a',b'$ be such that $\gamma=\{(t,g(t)):a'\leq t\leq b'\}$.  Let $a'=t_1<t_2<\cdots<t_{N-1}<t_N=b'$ be the partition of $[a',b']$ into $(N-1)$ equal intervals. For $i = 1, \ldots, N-1$, let $p_i$ be the intersection point of the lines tangent to $\gamma$ at the points $(t_i,g(t_i))$ and $(t_{i+1},g(t_{i+1}))$, and let $p_0=(t_1,g(t_1))$ and $p_N=(t_N,g(t_N))$.  %It is not hard to show that the points $P_i$ lie entirely on one side of the curve $\gamma$ (above or below depending on whether $g''>0$ or $g''<0$) and in the strip $[t_i,t_{i+1}]\times \R$, and are in the $\delta$-neighborhood of $\gamma$ if the partition is sufficiently fine.  
Let $P$ be the polygonal curve with vertices $p_0,\dots,p_N$ in that order. 

By taking $N$ sufficiently large, we have $P\subset\nbhd{\gamma}{\delta}$. By construction, each segment of $P$ is tangent to $\gamma$. Since $p_i \in [t_i,t_{i+1}]\times \R$ and the segments of $P$ have bounded slope, it follows that $|p_{i+1}- p_{i}| \lesssim t_{i+2} - t_i \lesssim \frac{1}{N}$. Thus, by taking $N$ sufficiently large, we can ensure every segment has length $< \e$.

It remains to show that for all $\alpha \in \R$, we have $\Phi_\alpha(P)\supset \Phi_\alpha(\gamma)$. For $\alpha = 0$, we have $\Phi_0(\gamma) \subset \Phi_0(\Phi_0^{-1}(0)) = \{0\}$ and since $p_0 \in P \cap \gamma$, we have $0 = \Phi_0(p_0) \in \Phi_0(P)$, which proves the claim for $\alpha = 0$. Now, suppose $\alpha \neq 0$. Let $z=(z_1,z_2)\in\Phi_\alpha(\gamma)$, so that $z_1 = \alpha$ and there exists $x_*=(t_*,g(t_*)) \in \gamma$ such that $\Phi_\alpha(x_*) = z$. Recalling that $\Phi_\alpha^{-1}(z) = z - \Gamma$, to show $z \in \Phi_\alpha(P)$ and finish the proof of this lemma, it suffices to show $P \cap (z-\Gamma) \neq \varnothing$.  In particular, since $\gamma\subset -\Gamma$, it suffices to show
\[
P \cap (z+\gamma) \neq \varnothing.
\]
If $t_* = t_i$ for some $i$, then $x_* \in P \cap \gamma$ and we are done. Now, assume $t_*\in (t_i,t_{i+1})$ for some $i$. We break into two cases, depending on whether $x_*$ is close to an endpoint of $z+\gamma$ or not.

Case 1, $[t_i,t_{i+1}]\subset [a'+z_1,b'+z_1]$: Consider the function 
\begin{align}
\label{eq:definition-d(t)}
d(t)=g(t)-[g(t-z_1)+z_2], \qquad\text{ so that } d'(t) = g'(t) - g'(t-z_1).
\end{align}
The function $d(t)$ measures the vertical distance between $\gamma$ and $z+\gamma$ on the line $\{t\} \times \R$. Since $z_1 = \alpha \neq 0$, by the fact that $g'$ is strictly monotone, it follows that $d$ is strictly monotone too. Furthermore, $d(t_*) = 0$. It follows that $d(t_i)$ and $d(t_{i+1})$ have opposite signs, so $z+\gamma$ lies below $\gamma$ at $t_i$ and above at $t_{i+1}$, or vice versa.  But, $P$ and $\gamma$ coincide at those points, so $z+\gamma$ lies below $P$ on one side of the strip $[t_i,t_{i+1}]\times \R$ and above $P$ on the other.  Therefore, there is an intersection point somewhere in the strip. \\

Case 2, $[t_i,t_{i+1}]\not\subset [a'+z_1,b'+z_1]$: Assume without loss of generality that $t_{i+1}>b'+z_1$ and that $g'$ is strictly increasing.  Since $P$ lies below $\gamma$ everywhere (by convexity), $P$ lies below $z+\gamma$ at $t_*$.  We claim $P$ lies above $z+\gamma$ at $t_i$.  Since $P$ and $\gamma$ coincide at $t_i$, this amounts to showing $d(t_i)>0$. This in turn follows from \eqref{eq:definition-d(t)}, $z_1<t_{i+1}-b'<0$, $t_i<t_*$, and the fact that $g'$ is strictly increasing.  
\end{proof}

% As observed earlier, the point $P_i$ lies in the strip $[t_i,t_{i+1}]\times \R$.  Moreover, by construction, $P_i$ is the intersection point of two tangent lines.  Therefore, if $t\in [t_i,t_{i+1}]$ is the first coordinate of $P_i$, it satisfies the equation
% \[
% g'(t_i)(t-t_i)+g(t_i)=g'(t_{i+1})(t-t_{i+1})+g(t_{i+1}).
% \]
% Rearranging, we have
% \begin{align*}
% t&=\frac{g(t_{i+1})-g(t_i)-[g'(t_{i+1})t_{i+1}-g'(t_i)t_i]}{g'(t_i)-g'(t_{i+1})} \\
% &\vspace{.2in} \\
% &=\frac{g(t_{i+1})-g(t_i)+[g'(t_i)-g'(t_{i+1})]t_i-g'(t_{i+1})(t_{i+1}-t_i)}{g'(t_i)-g'(t_{i+1})} \\
% &\vspace{.2in} \\
% &=t_i+\frac{g(t_{i+1})-g(t_i)-g'(t_{i+1})(t_{i+1}-t_i)}{g'(t_i)-g'(t_{i+1})}
% \end{align*}
% By Taylor's theorem, there exists $\xi_1\in (t_i,t_{i+1})$ such that 
% \[
% g(t_{i+1})-g(t_i)=g'(t_i)(t_{i+1}-t_i)+\frac{g''(\xi_1)}{2}(t_{i+1}-t_i)^2.
% \]
% Therefore,
% \begin{align*}
% t&=t_i+\frac{g'(t_i)(t_{i+1}-t_i)+\frac{g''(\xi_1)}{2}(t_{i+1}-t_i)^2-g'(t_{i+1})(t_{i+1}-t_i)}{g'(t_i)-g'(t_{i+1})} \\
% &\vspace{.2in} \\
% &=t_i+\frac{g'(t_i)-g'(t_{i+1})+\frac{g''(\xi_1)}{2}(t_{i+1}-t_i)}{g'(t_i)-g'(t_{i+1})}(t_{i+1}-t_i).
% \end{align*}
% By the mean value theorem, there exists $\xi_2\in (t_i,t_{i+1})$ such that
% \[
% \frac{g'(t_i)-g'(t_{i+1})}{t_i-t_{i+1}}=g''(\xi_2).
% \]
% Therefore,
% \[
% t=t_i+\left[1-\frac{g''(\xi_1)}{2g''(\xi_2)}\right](t_{i+1}-t_i).
% \]

% If $N$ is large enough, then for any $\xi_1,\xi_2\in(t_i,t_{i+1})$ we can ensure
% \[
% \frac{1}{4}<1-\frac{g''(\xi_1)}{2g''(\xi_2)}<\frac{3}{4}.
% \]

\begin{lem}[Key lemma, local version]
\label{lemma: keylemmalocal}
Let $A_\sma,A_\cov\subset \R$ be disjoint compact sets.  Suppose $A_\cov$ is an interval. Let $K \subset \R^2$ be a compact set such that $K \subset \dom \Phi_\alpha$ for every $\alpha\in A_\cov$. Then if $\e$ sufficiently small (depending on $A_\sma, A_\cov, K$), we have the following: For any line $L \subset \operatorname{int} K$ with $|L| < \e$ and with
\begin{align}
\label{eq:theta L small}
\theta_L\in\{\theta_\alpha(x):\alpha\in A_\sma, x \in L \cap \dom\Phi_\alpha\}.
\end{align}
For any $\delta>0$ there exists a set $E\subset \nbhd{L}{\delta}$ such that:
\begin{enumerate}[label=(\alph*)]
\item $\Phi_\alpha(L)\subset\Phi_\alpha(E)$ for all $\alpha\in A_\cov$.
\item $|\Phi_\alpha(E \cap \dom\Phi_\alpha)|\lesssim_{A_\sma,A_\cov,K} \e|L|$ for all $\alpha\in A_\sma$. 
\end{enumerate}
\end{lem}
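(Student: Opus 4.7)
Proof proposal:

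The plan is to construct $E$ as an iterated Venetian blind built on $L$, with parameters carefully chosen to simultaneously meet the covering and small-projection requirements. Begin by applying the hypothesis \eqref{eq:theta L small}: fix $\alpha_* \in A_\sma$ and $x_* \in L \cap \dom \Phi_{\alpha_*}$ with $\theta_{\alpha_*}(x_*) = \theta_L$. Since $A_\cov$ is a compact interval disjoint from $A_\sma$, $\alpha_*$ lies on one side of $A_\cov$ in $\R$; by the symmetry afforded by \Cref{remark:mod-pi-and-direction}, we may assume $\alpha_* < \inf A_\cov$. By the strict monotonicity of $\alpha \mapsto \theta_\alpha(x)$ (WLOG increasing in $\alpha$), the angles $\{\theta_\alpha(x) : \alpha \in A_\cov,\ x \in L\}$ all lie strictly above $\theta_L$, and by compactness of $K$, continuity of $\theta_\alpha$, and the shortness $|L| < \e$ (with $\e$ small depending on $A_\sma, A_\cov, K$), there is a uniform positive angular gap between these $A_\cov$-angles and $\theta_L$ depending only on $A_\sma, A_\cov, K$.

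Next, choose $\theta_\cov \in \R/\pi\Z$ placed just above the set $\{\theta_\alpha(x) : \alpha \in A_\cov,\ x \in L\}$ but below any $A_\sma$-angle lying on that same side of $\theta_L$; this is possible due to the positive angular separation between $A_\cov$ and $A_\sma$. Choose $\theta_\sma$ on the opposite side of $\theta_L$, far enough that the arc $[\theta_\sma, \theta_L]$ contains every $A_\sma$-angle lying on that side. Apply $\IVB(L, \theta_\sma, \theta_\cov, \cT)$ in the clockwise orientation of \Cref{remark:mod-pi-and-direction}, for a tree $\cT$ of depth $m$ and branching numbers $N_\ii$ chosen recursively sufficiently large. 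Then \Cref{lemma:ivb-cover-nonlinear} yields (a) for every $\alpha \in A_\cov$; \Cref{lemma:ivb-small-nonlinear-restricted-domain} yields the bound in (b) for every $\alpha \in A_\sma$ whose angle lies in $[\theta_\sma, \theta_L]$; and \Cref{lemma: IVBgeo}(b), with $N_\ii$ increased further if needed, ensures $E \subset \nbhd{L}{\delta}$.

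The main obstacle is handling the remaining $\alpha \in A_\sma$ whose angles lie on the same side of $\theta_L$ as the $A_\cov$-angles, namely those $\alpha \in A_\sma \cap (\alpha_*, \inf A_\cov)$ together with all $\alpha \in A_\sma \cap (\sup A_\cov, \infty)$ (if any). Because the small and covering regions of a single Venetian blind lie on opposite sides of the initial direction $\theta_L$, one IVB cannot simultaneously cover $A_\cov$ and make the projection small for these remaining $\alpha$'s. I plan to overcome this by applying a second iterated Venetian blind construction to each segment $L^\prime$ produced by the first IVB, now rotating in the opposite direction, with parameters tuned so that the small region of the second IVB contains the remaining $A_\sma$-angles while its covering property, applied segment-by-segment via \Cref{lemma:ivb-cover-nonlinear}, preserves the covering of $A_\cov$ established by the first IVB. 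The positive angular separation between $A_\sma$ and $A_\cov$, together with the freedom to take the second-stage tree deep and its branching numbers recursively large, is what should make this two-stage construction possible; verifying that $E \subset \nbhd{L}{\delta}$ survives the second stage is then a further application of \Cref{lemma: IVBgeo}(b) within the already-small neighborhood produced by the first stage.
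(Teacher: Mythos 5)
Your overall architecture --- a two-stage Venetian blind construction in which the first stage reorients $L$ and handles part of $A_\sma$, while the second stage, applied to each resulting segment and rotating the other way, handles the rest of $A_\sma$ while preserving the covering of $A_\cov$ --- is exactly the paper's strategy (the paper's first stage is a single non-iterated $\VB$ rather than an $\IVB$, since no smallness is needed from it). However, there is a genuine gap in your angle bookkeeping, concentrated on the parameters $\alpha\in A_\sma$ with $\alpha>\sup A_\cov$, which you explicitly defer to the second stage. Normalize $\theta_L=0$ in $\R/\pi\Z$ and let the $A_\cov$-directions fill an arc $(\beta_1,\beta_2)$ with $0<\beta_1<\beta_2$; the directions coming from $\alpha>\sup A_\cov$ lie beyond $\beta_2$. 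An iterated Venetian blind makes projections small only for directions in the single arc it sweeps, and its covering directions form a disjoint arc on the other side of its starting direction. Your second $\IVB$ starts at the leaves' direction $\theta_\sma$, which you placed only just far enough below $\theta_L$ to capture the $A_\sma$-angles on that side, i.e.\ strictly in the interior of the arc complementary to the $A_\cov$-directions. Sweeping counterclockwise from there, the swept arc can reach the directions beyond $\beta_2$ only by passing through $(\beta_1,\beta_2)$ itself, which destroys the covering property for $A_\cov$; stopping just below $\beta_1$ preserves covering but then neither stage ever makes $|\Phi_\alpha(E)|$ small for those $\alpha$. So as written the construction fails whenever $A_\sma$ has points on the far side of the interval $A_\cov$ whose domains still meet $L$.

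The repair, which is what the paper does, is to observe that \emph{all} $A_\sma$-directions lie in the single arc of $\R/\pi\Z$ complementary to an $\e_0$-neighborhood of the $A_\cov$-directions (the directions beyond $\beta_2$ sit near one endpoint $\theta_\sma^-$ of that complementary arc, reached from $\theta_L$ by going the long way around through the vertical direction), and that $\theta_L$ lies in the interior of that arc. Stage one should therefore rotate all the way to the endpoint $\theta_\sma^-$, and stage two should be a single $\IVB$ sweeping the entire complementary arc from $\theta_\sma^-$ to the other endpoint $\theta_\sma^+$; its covering arc is then the short arc containing $(\beta_1,\beta_2)$, and its small arc contains every $A_\sma$-direction simultaneously. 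This also fixes a secondary gap in your write-up: you invoke the first stage's smallness for the $A_\sma$-angles in $[\theta_\sma,\theta_L]$, but the final set $E$ is the output of the second stage, not the first, so that smallness does not transfer automatically; in the corrected arrangement the second stage's small-projection lemma applies to all of $A_\sma$ at once and no transfer is needed.
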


\begin{proof}
First, we claim the following consequence of compactness:
There exists $\e_0 > 0$ (depending on $A_\sma,A_\cov,K$) such that if the line segment $L \subset \operatorname{int} K$ and $\delta > 0$ satisfy $|L| < \e_0$ and $\delta < \e_0$ and $\nbhd{L}{\delta} \subset K$, then 
\begin{align}
\label{eq:claim dist directions positive}
\operatorname{dist}
(\{\theta_\alpha(x):\alpha\in A_\cov, x \in \nbhd{L}{\delta}\}
,
\{\theta_\alpha(x):\alpha\in A_\sma, x \in \nbhd{L}{\delta} \cap \dom\Phi_\alpha\})
>
\e_0.
\end{align}
(The distance is taken with respect to the metric on $\R/\pi\Z$. If the second set is empty, we define the distance to be infinity.) To prove \eqref{eq:claim dist directions positive}, recall that for a fixed $x$, $\theta_\alpha(x)$ is strictly monotonic 
and hence injective. Because $A_\sma,A_\cov\subset \R$ are disjoint compact sets, for every $x \in K$,
\[
d(x) :=
\operatorname{dist}
(\{\theta_\alpha(x):\alpha\in A_\cov\}
,
\{\theta_\alpha(x):\alpha\in A_\sma, x \in \dom\Phi_\alpha\})
>
0
\]
(If the second set is empty, we define $d(x) = \infty$.) Furthermore, because $K$ is compact, $\inf_{x \in K} d(x) > 0$. Because $|\nabla \theta_\alpha(x)| \lesssim 1$, there exists $\e_1 > 0$ such that if $K' \subset K$ is any set of diameter at most $\e_1$, then
\[
\operatorname{dist}
(\{\theta_\alpha(x):\alpha\in A_\cov, x \in K'\}
,
\{\theta_\alpha(x):\alpha\in A_\sma, x \in K' \cap \dom\Phi_\alpha\})
>
\frac{1}{2} \inf_{x \in K} d(x).
\]
Choosing $\e_0 < \min(\frac{1}{3}\e_1, \frac{1}{2} \inf_{x \in K} d(x))$ proves \eqref{eq:claim dist directions positive}. \\

For the remainder of the proof, let $\e_0$ be as in \eqref{eq:claim dist directions positive} and suppose $\e < \e_0$. Suppose $L \subset \operatorname{int} K$ and $\delta > 0$ satisfy $|L| < \e$ and $\delta < \e_0$ and $\nbhd{L}{\delta} \subset K$ and \eqref{eq:theta L small}. Then \eqref{eq:claim dist directions positive}, the fact that $A_\cov$ is an interval, and the fact that $\R/\pi\Z$ is topologically a circle imply that there exist disjoint intervals $[\theta_\cov^-,\theta_\cov^+], [\theta_\sma^-,\theta_\sma^+] \subset \R/\pi\Z$ such that 
\begin{gather}
\label{eq:separation theta small theta cover}
\operatorname{dist}([\theta_\cov^-,\theta_\cov^+], [\theta_\sma^-,\theta_\sma^+])
\geq \e_0
\\
\label{eq:theta_cover- theta_cover+}
\{\theta_\alpha(x):\alpha\in A_\cov,x\in \nbhd{L_\emp}{\delta}\}\subset (\theta_\cov^-,\theta_\cov^+)
\\
\label{eq:theta_small- theta_small+}
\{\theta_\alpha(x):\alpha\in A_\sma,x\in \nbhd{L_\emp}{\delta} \cap \dom\Phi_\alpha\}\subset (\theta_\sma^-,\theta_\sma^+).
\end{gather}
By \eqref{eq:theta L small}, we have $\theta_L\in (\theta_\sma^-,\theta_\sma^+)$.

The idea of the proof is as follows.  If it were true that $\theta_L=\theta_\sma^-$, then we could proceed directly by applying the iterated Venetian blind scheme. However, this never holds, since $\theta_L \in (\theta_\sma^-,\theta_\sma^+)$. Thus, we take two steps. First, we use a (single-step, non-iterated) Venetian blind construction to rotate the segments to point in direction $\theta_\sma^-$. Then, we apply the iterated Venetian blind scheme to the resulting segments. The ``covering directions'' in the first and second step are chosen to be $(\theta_L,\theta_\cov^+)$ and $(\theta_\cov^-,\theta_\sma^-)$ respectively; this is so that their intersection is the desired interval $(\theta_\cov^-,\theta_\cov^+)$. (See \Cref{fig:intersect covering directions}.)

% \begin{figure}[h]
% \centering
% \begin{tikzpicture}
% \draw[thick] (-1,0)--(1,0);
% \draw[thick] (0,-1)--(0,1);
% \draw[thick] (0.6,0.8)--(-0.6,-0.8);
% \draw[thick] (-0.8,0.6)--(0.8,-0.6);
% \draw[thick] (-0.6,0.8)--(0.6,-0.8);
% %labels
% \node[right] at (0.6,0.8) {$\theta_\sma^+$};
% \node[right] at (1,0) {$\theta_L$};
% \node[right] at (1,-.6) {$\theta_\sma^-$};
% \node[below] at (0.8,-.8) {$\theta_\cov^+$};
% \node[below] at (0.2,-1) {$\theta_\cov^-$};

% \end{tikzpicture}
% \caption{$(\theta_\cov^-,\theta_\cov^+)=(\theta_L,\theta_\cov^+)\cap(\theta_\cov^-,\theta_\sma^-)$ \alan{need a better picture...}}
% \label{fig:intersect covering directions}
% \end{figure}

\begin{figure}[h]
\centering
\begin{tikzpicture}
\draw[thick] (-2,0)--(2,0);
\draw[thick] (0,-2)--(0,2);
\draw[thick] (-1.73,-1)--(1.73,1);
\draw[thick] (-1.73,1)--(1.73,-1);
\draw[thick] (-1,1.73)--(1,-1.73);

%arcs
\draw [ultra thick, red] (1,0) arc [radius=1, start angle=0, end angle= 120];
\draw [ultra thick, red] (-1,0) arc [radius=1, start angle=180, end angle= 300];
\draw [ultra thick, blue] (0,.9) arc [radius=.9, start angle=90, end angle= 150];
\draw [ultra thick, blue] (0,-.9) arc [radius=.9, start angle=270, end angle= 330];
\draw [ultra thick, purple] (0,.8) arc [radius=.8, start angle=90, end angle= 120];
\draw [ultra thick, purple] (0,-.8) arc [radius=.8, start angle=270, end angle= 300];

%labels
\node[right] at (1.73,1) {$\theta_\sma^+$};
\node[right] at (2,0) {$\theta_L$};
\node[right] at (1.73,-1) {$\theta_\sma^-$};
\node[below right] at (1,-1.73) {$\theta_\cov^+$};
\node[below] at (0,-2) {$\theta_\cov^-$};
\node[left] at (-1.73,-1) {$\theta_\sma^+$};
\node[left] at (-2,0) {$\theta_L$};
\node[left] at (-1.73,1) {$\theta_\sma^-$};
\node[above left] at (-1,1.73) {$\theta_\cov^+$};
\node[above] at (0,2) {$\theta_\cov^-$};

\end{tikzpicture}
\caption{The angle intervals $(\theta_L,\theta_\cov^+)$ in red and $(\theta_\cov^-,\theta_\sma^-)$ in blue, and their intersection $(\theta_\cov^-,\theta_\cov^+)$ in purple.}
\label{fig:intersect covering directions}
\end{figure}

For the first step, let
\[
\mathcal{V}=\VB(L, \theta_\sma^-,\theta_\cov^+,N),
\]
where $N$ is sufficiently large so that $\cup\mathcal{V} \subset \nbhd{L}{\delta/2}$ and so that 
the conclusions of \Cref{lemma:vb-cover-nonlinear} hold for $\VB(L, \theta_\sma^-,\theta_\cov^+,N)$ and $A_{\cov}$. Note that $\theta_L  \in (\theta_{\sma}^-, \theta_{\cov}^+)$, so this Venetian blind is constructed in the clockwise direction; see \Cref{remark:mod-pi-and-direction}. Note also that \eqref{eq:theta_cover- theta_cover+} holds and that $(\theta_\cov^-,\theta_\cov^+) \subset (\theta_L,\theta_\cov^+)$, so the hypotheses or Lemma 2.5 are satisfied.  By \Cref{lemma:vb-cover-nonlinear}, we have 
\begin{align}
\label{reorient}
\Phi_\alpha(\cup \mathcal{V})&\supset \Phi_\alpha(L)
\qquad\text{ for every } \alpha \in A_\cov
.
\end{align}
%if $\alpha$ is such that $\theta_\alpha(x)\in (\theta_0,\theta_\cov^+)$ %\alan{note to self: figure out what to do with this} 
%for every $x\in L_\emp$. %(See \Cref{remark:mod-pi-and-direction}.)
%{\color{red} does it matter that the cover angle is bigger than $\theta_0$ here?}  

Next, for each $L'\in \mathcal{V}$, apply the iterated Venetian blind process to construct
\[
\mathcal{V}_{L'}:=\IVB(L',\theta_\sma^+,\theta_\cov^-,\cT_{L'}),
\]
with the tree $\cT_{L'}$ chosen so that the conclusions of \Cref{lemma:ivb-cover-nonlinear} and \Cref{lemma:ivb-small-nonlinear-restricted-domain} are satisfied and so that $\cup\mathcal{V}_{L'}\subset\nbhd{L}{\delta}$.  (This iterated Venetian blind is constructed in the counter-clockwise direction.) Finally, define
\[
E:=\bigcup_{L'\in\mathcal{V}} (\cup\mathcal{V}_{L'}).
\]
We claim this set satisfies the conclusion of the lemma.  \\

If $\alpha\in A_\cov$, it follows that
$
\theta_\alpha(x)\in (\theta_\cov^-,\theta_\cov^+) \subset (\theta_\cov^-,\theta_\sma^-)
$
for every $x\in \nbhd{L}{\delta}$.
%{\color{red} need to clarify that this is mod $\pi$} \alan{fixed. no longer need mod pi.} 
In particular, if $\alpha \in A_{\cov}$ and $L' \in \cV$, then we have $\theta_\alpha(x)\in [\theta_\cov^-,\theta_\sma^-]$ for every $x \in L'$, so Lemma \ref{lemma:ivb-cover-nonlinear} implies
\[
\Phi_\alpha(\cup \mathcal{V}_L)\supset \Phi_\alpha(L).
\]  
Taking the union over $L'\in\mathcal{V}$ and using (\ref{reorient}), we conclude $\Phi_\alpha(E)\supset \Phi_\alpha(L)$.  

If $\alpha\in A_\sma$ and $L' \in \mathcal{V}$, then
\[
\theta_\alpha(x)\in (\theta_\sma^-,\theta_\sma^+)
\qquad\text{for every $x\in L'$},
\]
hence by Lemma \ref{lemma:ivb-small-nonlinear-restricted-domain} we have
\[
|\Phi_\alpha(\cup \mathcal{V}_{L'})|
\lesssim 
\max\left(1,\frac{\sin|\theta_\sma^--\theta_\cov^-|}{\sin|\theta_\sma^+-\theta_\cov^-|}\right)
\e|L'|.
\]
Summing over $L'\in \mathcal{V}$ and using \Cref{lemma: VBgeo} and \eqref{eq:separation theta small theta cover} gives
\begin{align*}
|\Phi_\alpha(E)|
&\lesssim 
\max\left(1,\frac{\sin|\theta_\sma^--\theta_\cov^-|}{\sin|\theta_\sma^+-\theta_\cov^-|}\right)
\e
\sum_{L' \in \cV} 
|L'|
\\
&=
\max\left(1,\frac{\sin|\theta_\sma^--\theta_\cov^-|}{\sin|\theta_\sma^+-\theta_\cov^-|}\right)
\frac{\sin|\theta_L - \theta_{\cov}^+|}{\sin|\theta_{\sma}^- - \theta_{\cov}^+|}
\e|L|
\\
&\lesssim_{\e_0} \e |L|
\end{align*}
as desired.
\end{proof}

\begin{rmk}
\Cref{lemma: keylemmalocal} is a local nonlinear analogue of \cite[Theorem 2.4]{Fal86_paper}. Falconer's construction uses two iterated Venentian blinds, together with some affine transformations. Our approach uses only one iterated Venetian blind, and it avoids affine transformations, which could cause complications for nonlinear curves. 
\end{rmk}
We are now ready to prove the key lemma stated at the beginning of the section.  The idea is to use the polygon approximation lemma (\Cref{lemma: polygon-approximation}) to reduce to the local version.  
\begin{proof}[Proof of Lemma \ref{lemma: keylemma}]
Let $y,\alpha_0,\gamma,A_\sma,A_\cov,\e,\delta$ be as in the statement of Lemma \ref{lemma: keylemma}, and recall that $\Phi_{\alpha_0}^{-1}(y)=y-\Gamma$. By making $\delta$ smaller if necessary, we may assume that $\nbhd{\gamma}{\delta} \subset \intdom \Phi_\alpha$ for every $\alpha \in A_\cov$. Let $K$ be the closure of $\nbhd{\gamma}{\delta}$. By making $\e$ smaller if necessary (depending on $A_\sma,A_\cov,K$), we may assume that \Cref{lemma: keylemmalocal} can be applied with $A_\sma,A_\cov,K,\e$.

We must construct $E\subset\nbhd{\gamma}{\delta}$ satisfying properties (a)--(c) in the statement of the lemma.  The first step in the construction is to approximate the curve by a polygon.  Indeed, by applying Lemma \ref{lemma: polygon-approximation} with $\e, \delta$ and translating, we may obtain a polygonal curve $P = \bigcup_{i=1}^N S_i \subset \nbhd{\gamma} {\delta/2}$, where each $S_i$ is a line segment with $|S_i|<\e$, and such that for every $\alpha\in \R$, we have $\Phi_\alpha(P)\supset \Phi_\alpha(\gamma)$. 

For each $i$, we want to apply Lemma \ref{lemma: keylemmalocal} with $S_i$ as the initial segment.  To do this, we must check the hypotheses of that lemma are met.  By assumption, $A_\cov$ and $A_\sma$ are compact sets, and $A_\cov$ is an interval. By construction, each $S_i$ satisfies $|S_i|<\e$. If $x_i\in \gamma$ is the point at which the segment $S_i$ is tangent, then $\theta_{\alpha_0}(x_i)$ is the direction of the initial segment $S_i$.  Because $\alpha_0\in A_\sma$ by the hypotheses of Lemma \ref{lemma: keylemma}, the assumption \eqref{eq:theta L small} is satisfied.  The only other hypothesis of Lemma \ref{lemma: keylemmalocal} is that $S_i\subset \operatorname{int} K$ for all $\alpha\in A_\cov$. This is true by definition of $K$. % Since $\gamma$ is a compact subset of $\intdom \Phi_\alpha$ for all $\alpha\in A_\cov$, this can be arranged by making $\delta$ smaller, if needed, to ensure $\nbhd{\gamma} {\delta/2}$ is also a subset of $\intdom \Phi_\alpha$ for all $\alpha\in A_\cov$.

Now that we have verified the hypotheses, we apply Lemma \ref{lemma: keylemmalocal} to obtain sets $E_i\subset \nbhd{S_i}{\delta/2}$ corresponding to each $S_i$ satisfying the conclusion of Lemma \ref{lemma: keylemmalocal}. 
Let $E=\bigcup_i E_i$.  We have $E\subset \nbhd{\gamma}{\delta}$ by construction.  Combining Lemmas \ref{lemma: polygon-approximation} and \ref{lemma: keylemmalocal}, for $\alpha\in A_\cov$ we have
\[
\Phi_\alpha(E)\supset \Phi_\alpha(P)\supset \Phi_\alpha(\gamma),
\]
which establishes part \ref{item:keylemma cover}.  To prove part \ref{item:keylemma small}, we use Lemma \ref{lemma: keylemmalocal} to get
\begin{align*}
|\Phi_\alpha(E \cap \dom\Phi_\alpha)|&\leq \sum_i |\Phi_\alpha(E_i\cap \dom\Phi_\alpha)| 
\lesssim_{A_\sma,A_\cov,K} \sum_{i}\e|S_i|
\lesssim \e.
\end{align*}
This proves (c) with an extra constant factor, which we can remove by making $\e$ smaller.
\end{proof}

\section{Proof of main theorem}
\label{ProofSection}
Recall the definition of $\Phi(E)$ given in section \ref{notation}: 
\[
\Phi(E)=\bigcup_\alpha \Phi_\alpha(E\cap\dom\Phi_\alpha)=E+\Gamma.
\]
With this notation, our main theorem can be restated as follows:
\begin{thm*}[Rephrasing of Theorem \ref{maintheorem}]
Let $\newA\subset \R^2$ be measurable.  There exists a set $E\subset \R^2$ such that 
\begin{enumerate}[label=(\alph*)]
\item $\Phi(E)\supset \newA$,
\item $|\Phi(E)\setminus \newA|=0$.
\end{enumerate}
\end{thm*}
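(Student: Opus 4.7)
I would prove the equivalent prescribed-projection formulation Theorem~\ref{prescribedprojection} using Lemma~\ref{lemma: keylemma} as a black box. By exhausting $\R^2 = \bigcup_k [-k,k]^2$ and constructing $E$ as a countable union of constructions on each piece, it suffices to treat the case of bounded $W$.

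The main step is to establish an ``$\varepsilon$-efficient cover'' lemma: for every bounded measurable $W$ and every $\varepsilon > 0$, there is a compact set $F$ (a finite union of line segments) with $W \subset F + \Gamma$ and $|(F + \Gamma) \setminus W| < \varepsilon$. To build $F$, first approximate $W$ from outside by a finite union $U = \bigcup_i R_i$ of small open rectangles with $|U \setminus W| < \varepsilon/2$. Each $R_i$ is to be chosen small enough that there is a fiber piece $\gamma_i \subset y_i - \Gamma$, anchored at some auxiliary level $\alpha_i^* \notin I_i := \pi_1(R_i)$, satisfying $\Phi_\alpha(\gamma_i) \supset R_i \cap (\{\alpha\}\times\R)$ for every $\alpha \in I_i$ and lying strictly inside $\intdom \Phi_\alpha$ for such $\alpha$. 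I then apply Lemma~\ref{lemma: keylemma} to each pair $(\gamma_i, I_i)$ with $A_\cov^i := I_i$, with $A_\sma^i$ a compact set containing $\alpha_i^*$ together with every $I_j$ for $j \ne i$ (and covering the remainder of the relevant range of $\alpha$), and with individual errors $\varepsilon_i, \delta_i$ chosen summably. Setting $F := \bigcup_i F_i$, the containment $W \subset F + \Gamma$ is immediate by the choice of $\gamma_i$ and the covering conclusion of Lemma~\ref{lemma: keylemma}. For the spillover bound, Fubini gives $|(F + \Gamma) \setminus W| = \int |\Phi_\alpha(F) \setminus W_\alpha| \, d\alpha$; for $\alpha \in I_j$ the contribution of each $F_i$ with $i \ne j$ is at most $\varepsilon_i$ by conclusion~(c) of the key lemma, the contribution of $F_j$ itself is controlled by the slice of $R_j$ up to an overshoot of order $\delta_j$, and for $\alpha \notin \bigcup_j I_j$ every $F_i$ contributes at most $\varepsilon_i$. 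Summing and choosing parameters carefully gives the $\varepsilon$ bound.

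To remove the error, I would iterate the $\varepsilon$-efficient lemma with $\varepsilon_n = 2^{-n}$ to produce a sequence of compact sets $F_n$, and compose them into a single compact $E$. The cleanest formulation is to arrange that the closed sets $\tilde F_n := \overline{\nbhd{F_n}{\delta_n}}$ (with $\delta_n \to 0$ chosen compatibly with $|F_n|_1$) are nested decreasing, and then take $E := \bigcap_n \tilde F_n$. Since each $\tilde F_n$ and $\Gamma$ are compact, a standard subsequence-extraction argument gives $E + \Gamma = \bigcap_n (\tilde F_n + \Gamma)$, which contains $W$ and has measure equal to $\lim_n |\tilde F_n + \Gamma| = |W|$, as required.

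The principal obstacle is arranging the nested structure of the $\tilde F_n$ while preserving both the cover property and the spillover bound. This requires running the $(n{+}1)$-th stage inside the support of $F_n$ and exploiting the freedom in the $\delta$-parameter of the key lemma, together with careful bookkeeping linking the parameters at one stage to the next. A secondary but important obstacle — flagged in the outline around Figure~\ref{E} — is the hypothesis $\gamma \subset \intdom \Phi_\alpha$ in Lemma~\ref{lemma: keylemma}: because $\dom \Phi_\alpha$ is only a vertical strip, the rectangle decomposition must be adapted to the strip geometry, and this is why one needs many small applications rather than a single global one.
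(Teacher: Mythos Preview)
Your outline is close in spirit to the paper's --- both reduce to bounded $W$, invoke \Cref{lemma: keylemma} locally, and iterate to kill the error --- but the step you yourself flag as ``the principal obstacle'' is where a genuine idea is still missing. To nest $\tilde F_{n+1} \subset \tilde F_n$ while keeping $W \subset F_{n+1} + \Gamma$, you must for every target point be able to locate a fiber piece $\gamma \subset y - \Gamma$ lying inside the thin tube $\nbhd{F_n}{\delta_n}$ and long enough that its $\Phi$-image covers a neighborhood of that point. Since $F_n$ is a one-dimensional union of straight segments and the fibers are strictly curved, any such $\gamma$ has length $O(\sqrt{\delta_n})$, forcing the stage-$(n{+}1)$ rectangles to shrink with $\delta_n$; more seriously, you have not explained why the required fiber pieces sit inside the tubes at all, nor why the anchoring condition $\alpha_0 \in A_\sma$ of \Cref{lemma: keylemma} is compatible with that constraint. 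A secondary issue: placing every $I_j$ with $j \ne i$ into $A_\sma^i$ violates the disjointness hypothesis $A_\sma \cap A_\cov = \varnothing$ of \Cref{lemma: keylemma} whenever two rectangles share a column.

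The paper resolves the nesting by abandoning one-dimensional skeletons and working with open disks throughout. It proves a topological base lemma (\Cref{base}): for any open disk $D$ and open $V \subset \Phi(D)$, there is a base for the topology of $V$ each of whose members admits an $\varepsilon$-efficient cover $\Phi(E)\supset U$ with $E$ a finite union of disks \emph{contained in $D$}. The key geometric input (\Cref{differenceset}) is that $\gamma + \Gamma$ has nonempty interior for any nondegenerate fiber arc $\gamma$; because disks have two-dimensional width they always contain fiber arcs of definite length regardless of direction, so the nesting $E \subset D$ comes for free. One then recursively builds a ``system of disks'' $\{\cD_m\}$ satisfying the refinement property (iv) of \Cref{definition: system-of-disks} and takes $E = \bigcap_m \bigcup_{D \in \cD_m} \overline D$ (\Cref{disks}). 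Your thin-tube scheme might ultimately be salvageable, but it would require re-deriving the content of \Cref{differenceset} and \Cref{base} in a more delicate setting; switching to disks is both simpler and what the paper actually does.
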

By Fubini, this is equivalent to Theorem \ref{prescribedprojection}.
%to the following prescribed projection result. 
% \begin{thm}[Non-linear prescribed projection theorem]
% \label{prescribedprojection}
% Let $\newA\subset \R^2$ be measurable.  
% For each $\alpha \in \R$, suppose $\newA_\alpha\subset \R$ so that $\cup_\alpha \{\alpha\}\times \newA_\alpha$ is measurable.  
% There exists a set $E\subset \R^2$ such that for almost every $\alpha$,
% \begin{enumerate}[label=(\alph*)]
% \item $\Phi_\alpha(E)\supset \newA_\alpha$,
% \item $|\Phi_\alpha(E)\setminus \newA_\alpha|=0$.
% \end{enumerate}
% \end{thm}

The goal of this section is to prove this theorem.  Most of the geometric content of the proof is already contained in our key lemma (Lemma \ref{lemma: keylemma}).  What remains is a fairly general framework first used in the linear setting by \cite{Fal86_paper}, which has been adapted to our present setting.
\begin{lem}
\label{difference}
If $D$ is an open disk in the plane with closure $\overline{D}$, then $|\Phi(\overline{D})\setminus \Phi(D)|=0$.
\end{lem}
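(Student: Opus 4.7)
The plan is to decompose $\Phi(\overline{D}) \setminus \Phi(D) \subset A \cup B$ and show each part has measure zero. Unpacking definitions, $z \in \Phi(E)$ means $(z - \Gamma) \cap E \neq \emptyset$, so $z \in \Phi(\overline{D}) \setminus \Phi(D)$ precisely when the arc $z - \Gamma$ meets $\partial D$ but is disjoint from the open disk $D$. Fix any $p \in (z - \Gamma) \cap \partial D$. If $p$ is one of the two endpoints of the arc, then $z$ lies in one of the two circles $\partial D + (a, f(a))$ or $\partial D + (b, f(b))$; I call their union $B$, and clearly $|B| = 0$. If instead $p$ is an interior point of the arc, I claim the tangent lines to $z - \Gamma$ and to $\partial D$ at $p$ must coincide: otherwise these two $C^1$ curves meet transversally at $p$, which would force $z - \Gamma$ to enter $D$, contradicting $(z - \Gamma) \cap D = \emptyset$. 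Let $A$ denote the set of $z$ for which such an interior tangential intersection exists.

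To control $A$, I would parametrize $\partial D$ by angle $\phi$, writing its points as $c + r(\cos \phi, \sin \phi)$ with tangent direction $(-\sin \phi, \cos \phi)$, and note that at arc parameter $t \in (a,b)$ the curve $z - \Gamma$ has tangent direction parallel to $(1, f'(t))$. The tangency condition becomes $f'(t) = -\cot \phi$ (with $\sin \phi \neq 0$, since the arc's tangent is never vertical). Strict monotonicity of $f'$ determines $t = t(\phi)$ uniquely wherever a solution exists in $[a,b]$, and the incidence requirement $(z_1 - t, z_2 - f(t)) = c + r(\cos \phi, \sin \phi)$ then gives
\[
z_1 = c_1 + r \cos \phi + t(\phi), \qquad z_2 = c_2 + r \sin \phi + f(t(\phi)),
\]
so $z$ is a continuous function of the single real parameter $\phi$ on its (piecewise-interval) domain.

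Moreover, $t(\phi)$ is monotone on each branch of its domain, being a composition of monotone functions, so the two coordinates of the parametrization $\phi \mapsto z(\phi)$ are of bounded variation and the image is a rectifiable curve. Therefore $|A| = 0$, and combined with $|B| = 0$ this proves $|\Phi(\overline{D}) \setminus \Phi(D)| = 0$. I expect the main obstacle to be phrasing the tangency dichotomy cleanly: everything afterwards amounts to reading off an explicit one-parameter family, but one must take care that interior intersections without matching tangents really do force the arc into $D$ (a standard consequence of transverse intersection of two $C^1$ curves), and that degenerate cases (tangencies occurring at an endpoint, or $-\cot\phi$ outside the range of $f'$) are absorbed by the endpoint set $B$ or are vacuous.
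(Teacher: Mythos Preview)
Your argument is correct but takes a genuinely different route from the paper. The paper's proof is much shorter: it slices by vertical lines and observes that for each $\alpha$, the set $(\Phi(\overline{D})\setminus\Phi(D))\cap(\{\alpha\}\times\R)$ is contained in $\overline{\Phi_\alpha(D)}\setminus\Phi_\alpha(D)$; since $\Phi_\alpha$ is continuous and $D$ is connected, $\Phi_\alpha(D)$ is an interval, so this boundary has at most two points, and Fubini finishes the job. Your approach instead classifies the intersection points of $z-\Gamma$ with $\partial D$ as endpoint-type or interior-tangential, handles the first by translating $\partial D$ and the second by an explicit one-parameter rectifiable parametrization $\phi\mapsto z(\phi)$. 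This is more work---you need the transversality dichotomy, the inversion of $f'$, and the bounded-variation check---but it yields finer information: the exceptional set is not merely null but lies on a finite union of rectifiable curves. The paper's slicing trick exploits the special product-like structure of $\Phi$ (each $\Phi_\alpha$ maps to a line) and would not directly adapt to more general curved-fiber projections, whereas your tangency argument is closer in spirit to envelope-of-curves reasoning and could conceivably generalize further. Both are valid; the paper's is just quicker here.
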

\begin{proof}
If $E$ is Borel, then $\Phi(E)=\Gamma+E$ is the image of the Borel set $\Gamma\times E$ under the continuous map $(x,y)\mapsto x+y$, hence it is measurable.  We claim the set $\Phi(\overline{D})\setminus \Phi(D)$ intersects any vertical line in at most two points; the statement of the lemma then follows from measurability of the set and Fubini.  To prove the claim, fix $\alpha$ and observe 
\[
(\Phi(\overline{D})\setminus \Phi(D))\cap(\{\alpha\}\times\R)=\Phi_\alpha(\overline{D})\setminus\Phi_\alpha(D)
\subset
\overline{\Phi_\alpha(D)}\setminus\Phi_\alpha(D)
.
\]
Since $D$ is connected and $\Phi_\alpha$ is continuous, $\Phi_\alpha(D)$ is a connected subset of the vertical line $\{\alpha\}\times\R$. Thus, $\overline{\Phi_\alpha(D)}\setminus\Phi_\alpha(D)$ contains at most two points, as desired.
\end{proof}

\begin{dfn}
\label{definition: system-of-disks}
Let $\newA\subset \R^2$ be measurable.  Suppose for each $m\in\N$ we have a measurable set $\newA_m\supset \newA$ satisfying
\begin{enumerate}[label=(\roman*)]
\item $|\newA_m\setminus \newA|\to 0$ as $m\to\infty$.
\end{enumerate}
We refer to $\{\newA_m:m\in\N\}$ as a \textbf{system of approximations} to $\newA$.  Suppose also for each $m$ that $\cD_m$ is a countable family of open disks in the plane satisfying the following:
\begin{enumerate}[label=(\roman*)]\addtocounter{enumi}{1}
\item $\Phi(\cup \cD_m)\supset \newA_m$,
\item $|\Phi(\cup \cD_m)\setminus \newA_m|\to 0$ as $m\to\infty$,
\item For any $D\in\cD_{m-1}$, if $y\in \Phi(D)\cap \newA_m$ then there exists $D'\in\cD_m$ with $D'\subset D$ and $y\in\Phi(D')$.
\end{enumerate}
We call $\{\cD_m:m\in\N\}$ a \textbf{system of disks} corresponding to $\{\newA_m:m\in\N\}$.
\end{dfn}

\begin{lem}[Systems of disks produce efficient coverings]
\label{disks}
Let $\newA\subset \R^2$ be measurable.  Let $\{\newA_m:m\in\N\}$ be a system of approximations and $\{\cD_m:m\in\N\}$ a corresponding system of disks.  Let $E=\bigcap_m \bigcup_{D\in\cD_m}\overline{D}$.  Then, we have $\Phi(E)\supset \newA$ and $|\Phi(E)\setminus \newA|=0$.
\end{lem}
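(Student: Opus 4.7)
The plan is to prove the two conclusions separately, both being essentially a matter of unpacking the defining properties of a system of disks.

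For the containment $\Phi(E) \supset \newA$, I would fix $y \in \newA$ and inductively build a nested sequence $D_1 \supset D_2 \supset \cdots$ of disks with $D_m \in \cD_m$ and $y \in \Phi(D_m)$ for every $m$. The base case $D_1$ comes from property (ii), since $y \in \newA \subset \newA_1 \subset \Phi(\cup \cD_1)$. For the inductive step, since $y \in \newA \subset \newA_m$ and $y \in \Phi(D_{m-1})$, property (iv) supplies $D_m \in \cD_m$ with $D_m \subset D_{m-1}$ and $y \in \Phi(D_m)$. Next, for each $m$ the set $K_m := (y - \Gamma) \cap \overline{D_m}$ is non-empty (since $y \in \Phi(D_m)$ means some $x \in D_m$ satisfies $y \in x + \Gamma$, equivalently $x \in (y-\Gamma) \cap D_m$) and compact, because $\Gamma$ is the image of $[a,b]$ under a continuous map and $\overline{D_m}$ is closed and bounded. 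These sets are nested, so by the finite intersection property there exists $x \in \bigcap_m K_m$. This point $x$ lies in $\overline{D_m} \subset \bigcup_{D \in \cD_m} \overline{D}$ for every $m$, hence in $E$, and $y \in x + \Gamma = \Phi(x) \subset \Phi(E)$.

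For the null-measure conclusion, I would fix $m$ and use $E \subset \bigcup_{D \in \cD_m} \overline{D}$ to obtain
$$\Phi(E) \subset \bigcup_{D \in \cD_m} \Phi(\overline{D}).$$
By Lemma \ref{difference}, each $\Phi(\overline{D}) \setminus \Phi(D)$ has Lebesgue measure zero, and the countability of $\cD_m$ allows me to combine these null sets into a single null set; comparing unions then yields $|\Phi(E) \setminus \Phi(\cup\cD_m)| = 0$. Hence
$$|\Phi(E) \setminus \newA| \leq |\Phi(\cup\cD_m) \setminus \newA| \leq |\Phi(\cup\cD_m) \setminus \newA_m| + |\newA_m \setminus \newA|,$$
and both terms on the right tend to $0$ as $m \to \infty$ by properties (iii) and (i) respectively.

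The argument is mostly bookkeeping; the only mildly subtle step is the compactness-based extraction of the point $x$ in the first part, which crucially relies on $\Gamma$ being compact so that the nested intersection $\bigcap_m K_m$ is non-empty. The substantive geometric content needed to build such a system of disks has already been absorbed into Lemma \ref{lemma: keylemma}, so that the present lemma is simply the abstract mechanism that converts a system of disks into the desired efficient covering.
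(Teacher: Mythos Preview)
Your proof is correct and follows essentially the same approach as the paper's. The only cosmetic differences are that you make the compactness step in the first part more explicit (via the nested compact sets $K_m = (y-\Gamma)\cap\overline{D_m}$, whereas the paper just picks $x_m\in D_m$ with $y\in\Phi(x_m)$ and invokes compactness), and in the second part you first isolate $|\Phi(E)\setminus\Phi(\cup\cD_m)|=0$ before a two-term decomposition, while the paper writes a single three-term decomposition; the content is identical.
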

\begin{proof}
First we show $\Phi(E)\supset \newA$.  Let $y\in \newA$; by condition (ii) of Definition \ref{definition: system-of-disks}, $\newA\subset \newA_1\subset \Phi(E_1)$, so there exists $D_1\in \mathcal{D}_1$ with $y\in \Phi(D_1)$.  Since $\newA\subset \newA_2$ also, we have $y\in \Phi(D_1)\cap \newA_2$.  By condition (iv) of Definition \ref{definition: system-of-disks}, there exists $D_2\in \mathcal{D}_2$ with $D_2\subset D_1$ and $y\in \Phi(D_2)$.  But, since $\newA\subset \newA_3$ as well, we have $y\in \Phi(D_2)\cap \newA_3$.  Continuing in this way, we obtain a nested sequence of disks $D_1\supset D_2\supset D_3\supset\cdots$ such that $y\in \Phi(D_m)$ for each $m$.  Let $x_m\in D_m$ be such that $y\in \Phi(x_m)$.  By compactness, there exists $x\in \bigcap_m \overline{D}_m$ such that $y\in \Phi(x)$, and $\bigcap_m \overline{D}_m\subset E$, so $y\in \Phi(E)$.  

To show $|\Phi(E)\setminus \newA|=0$, let $E_m = \cup \cD_m$ and $\widetilde E_m = \bigcup_{D\in\cD_m}\overline{D}$. Observe that for any fixed $m$ we have the following decomposition:
\[
|\Phi(E)\setminus \newA|
\leq
|\Phi(\widetilde E_m)\setminus \newA|
\leq 
|\Phi(\widetilde E_m)\setminus \Phi(E_m)|
+
|\Phi(E_m)\setminus \newA_m|
+
|\newA_m\setminus \newA|.
\]
The first term on the right is zero by Lemma \ref{difference}, and the second and third terms tend to zero as $m\to\infty$ by (iii) and (i) of \Cref{definition: system-of-disks}, respectively.
\end{proof}

\begin{lem}
\label{differenceset}
%\alan{yet another version}
Recall that $\Gamma = \{(t,f(t)) : t \in [a,b]\}$. Let $[a', b'] \subset [a,b]$ and $\widetilde\Gamma = \{(t,f(t)) : t \in [a',b']\}$.  Also, let $\Gamma^\circ=\{(t,f(t)):t\in (a,b)\}$ denote the relative interior of the curve $\Gamma$, and define $\widetilde\Gamma^\circ$ similarly.  
\begin{enumerate}[label=(\alph*)]
    \item The set $\Gamma-\widetilde\Gamma$ has non-empty interior.  More precisely,
    \begin{align}\label{eq:int gamma-gamma}
    \textup{int}(\Gamma-\widetilde\Gamma)= (\Gamma^\circ-\widetilde\Gamma^\circ)\setminus \{0\}.
    \end{align}
    \item The boundary of $\Gamma-\widetilde\Gamma$ has measure zero.
    \item For any interval $(c,d) \subset \R$, the set $((c,d)\times \R)\cap (\Gamma-\widetilde\Gamma)$ is contained in a horizontal strip of width $\lesssim \max\{|c|,|d|\}$, where the implied constant only depends on $f$.
\end{enumerate}

\end{lem}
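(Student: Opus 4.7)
The plan is to introduce the map
\[
G: [a,b]\times [a', b'] \to \R^2, \qquad G(s,t) = (s-t,\,f(s)-f(t)),
\]
so that $\Gamma - \widetilde\Gamma = G([a,b]\times[a',b'])$ and $\Gamma^\circ - \widetilde\Gamma^\circ = G((a,b)\times(a',b'))$. Two features of $G$ drive the entire proof: its Jacobian determinant is $f'(s)-f'(t)$, which is nonzero whenever $s\neq t$ by strict monotonicity of $f'$; and $G$ is injective on the complement of the diagonal, with the entire diagonal collapsing to $0$. Injectivity off the diagonal follows by fixing $u = s-t \neq 0$ and observing that $h(t) := f(t+u) - f(t)$ has derivative $f'(t+u) - f'(t)$ of constant nonzero sign, hence is strictly monotone in $t$.

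For part (a), the inclusion $(\Gamma^\circ - \widetilde\Gamma^\circ) \setminus \{0\} \subset \textup{int}(\Gamma - \widetilde\Gamma)$ is immediate from the inverse function theorem applied at any off-diagonal interior preimage. For the reverse inclusion, suppose $(x_0, y_0) \in \textup{int}(\Gamma - \widetilde\Gamma)$. First, $(x_0, y_0) \neq 0$, because $\Gamma - \widetilde\Gamma$ meets the vertical line $\{0\}\times\R$ only at the origin, so every neighborhood of $0$ contains points not in $\Gamma-\widetilde\Gamma$. By injectivity, $(x_0,y_0)$ has a unique preimage $(s_0, t_0)$, and $s_0 \neq t_0$. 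If this preimage lay on the boundary of $[a,b]\times[a',b']$, the inverse function theorem would realize $G$ as a local diffeomorphism from a full $\R^2$-neighborhood of $(s_0,t_0)$ onto a neighborhood of $(x_0,y_0)$; but only the ``inward'' half of the preimage-neighborhood lies in the rectangle. Combined with global injectivity (which, by a compactness argument, forces preimages of points near $(x_0,y_0)$ to lie near $(s_0,t_0)$), the image $G([a,b]\times[a',b'])$ would then contain only a topological half-disk around $(x_0,y_0)$, contradicting interior-ness. Hence $(s_0, t_0) \in (a,b) \times (a', b')$ and $(x_0,y_0) \in (\Gamma^\circ - \widetilde\Gamma^\circ) \setminus \{0\}$.

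For part (b), since $\Gamma - \widetilde\Gamma$ is compact, its boundary equals $(\Gamma-\widetilde\Gamma) \setminus \textup{int}(\Gamma - \widetilde\Gamma)$, which by part (a) is contained in $G(\partial([a,b]\times[a',b'])) \cup \{0\}$. Each of the four sides of the rectangle maps under $G$ to a translate of $\pm\Gamma$ or $\pm\widetilde\Gamma$, i.e., a $C^1$ curve, so the whole boundary is a finite union of $C^1$ curves together with a single point, and therefore has planar Lebesgue measure zero. For part (c), if $(u,v) \in \Gamma - \widetilde\Gamma$ with $u \in (c,d)$, write $u = s-t$ and $v = f(s) - f(t)$; since $(c,d)$ is an interval, $|u| \leq \max\{|c|, |d|\}$, and by the mean value theorem $|v| \leq \|f'\|_\infty |u| \leq \|f'\|_\infty \max\{|c|, |d|\}$, so the set lies inside the horizontal strip $\{|y| \leq \|f'\|_\infty \max\{|c|,|d|\}\}$, whose width is $\lesssim \max\{|c|,|d|\}$.

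The main obstacle is the boundary case in part (a): verifying that an interior image point cannot have its unique preimage on the edge of the parameter rectangle. This is the one place where the inverse function theorem (producing a local half-neighborhood) and global injectivity (ruling out any other preimage supplying the missing half) must be combined; every other step is either a direct application of the inverse function theorem, the bounded-derivative estimate, or the observation that continuous images of compact $1$-dimensional sets are null.
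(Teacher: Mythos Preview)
Your proof is correct, with one routine technicality left implicit: when you apply the inverse function theorem at a boundary point $(s_0,t_0)$ with $s_0\in\{a,b\}$ (or $t_0\in\{a',b'\}\cap\{a,b\}$), you need $G$ to be defined and $C^1$ on a full $\R^2$-neighborhood of that point, which requires extending $f$ to a $C^1$ function on an open interval containing $[a,b]$. This is standard and harmless, but worth a sentence.

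That said, your approach is genuinely different from the paper's. The paper never uses the inverse function theorem: it slices $\Gamma-\widetilde\Gamma$ by vertical lines, writing
\[
\Gamma-\widetilde\Gamma=\bigcup_{s\in[a-b',\,b-a']}\{s\}\times I_s,\qquad I_s=\{f(t+s)-f(t):t\in[\max\{a-s,a'\},\min\{b-s,b'\}]\},
\]
observes that strict monotonicity of $t\mapsto f(t+s)-f(t)$ makes each $I_s$ a compact interval (degenerate exactly for $s\in\{a-b',0,b-a'\}$), and then reads off the interior and boundary slice-by-slice; part (b) follows because each vertical slice of the boundary contains at most two points. Your argument instead treats $G(s,t)=(s-t,f(s)-f(t))$ as a local diffeomorphism off the diagonal and combines the inverse function theorem with global injectivity and compactness to pin down the interior; for (b) you identify the boundary with the $G$-image of the four rectangle edges, i.e., four translates or reflections of $\Gamma,\widetilde\Gamma$. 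The paper's slicing is more elementary and self-contained; your approach is more conceptual and makes the structure of the boundary as a union of explicit $C^1$ curves transparent. Part (c) is the same mean-value-theorem estimate in both.
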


\begin{proof}%[\alan{Proof of latest version}]
Without loss of generality, assume $f'$ is strictly increasing. By definition of $\Gamma-\widetilde\Gamma$, we have
\begin{align}
\label{eq:gamma-gamma expression}
\Gamma-\widetilde\Gamma
&=
\{(s, f(t+s)-f(t)) : t \in [a',b'], t+s \in [a,b]\}
=
\bigcup_{s \in [a-b', b-a']}
\{s\} \times I_s
\end{align}
where
\begin{align*}
I_s = \{f(t+s)-f(t):t\in [\max\{a-s, a'\},\min\{b-s,b'\}]\}
\end{align*}
Observe that for $s \in \{a-b',0,b-a'\}$, the set $I_s$ consists of a single point, whereas for $s \in (a-b',0) \cup (0,b-a')$
the set $I_s$ is a nondegenerate interval (this is because $f'$ is strictly increasing, so the function $t \mapsto f(t+s)-f(t)$ is strictly monotone on the nondegenerate interval $[\max\{a-s, a'\},\min\{b-s,b'\}]$). A similar argument shows that
\begin{align}
\label{eq:gammacirc-gammacirc expression}
\Gamma^\circ-\widetilde\Gamma^\circ
&=
\{(s, f(t+s)-f(t)) : t \in (a',b'), t+s \in (a,b)\}
=
\{0\} \cup
\bigcup_{s \in (a-b',0) \cup (0, b-a')}
\{s\} \times \operatorname{int}I_s
\end{align}
Comparing \eqref{eq:gamma-gamma expression} and \eqref{eq:gammacirc-gammacirc expression} gives \eqref{eq:int gamma-gamma} and proves (a). By \eqref{eq:gamma-gamma expression}, the boundary of $\Gamma-\widetilde\Gamma$ is 
\[
\bigcup_{s \in [a-b', b-a']}
\{s\} \times \partial I_s
\]
which has measure zero, which proves (b). Finally, if $s \in (c,d)$, then by the mean value theorem,
\begin{align*}
\max\{|f(t+s)-f(t)| : t\in [\max\{a-s, a'\},\min\{b-s,b'\}]\}
\leq \|f'\|_\infty |s|
\leq \|f'\|_\infty \max\{|c|, |d|\}
\end{align*}
which proves (c).
\end{proof}

\begin{lem}[Topological base lemma]
\label{base}
Let $D,V\subset\R^2$ be open sets such that $\Phi(D)\supset V$, and assume $D\subset \intdom \Phi_\alpha$ for any $\alpha$ such that the line $\{\alpha\}\times \R$ passes through $\overline{V}$.  There is a base $\mathcal{B}$ for the topology of $V$ such that for any $U\in\mathcal{B}$ and $\e>0$, there exists a set $E\subset D$ which satisfies 
\begin{enumerate}[label=(\alph*)]
\item $E$ is a finite union of open disks,
\item \label{item:baselemma cover} $U\subset \Phi(E)$,
\item \label{item:baselemma small} $|\Phi(E)\setminus U|<\e$.
\end{enumerate}
\end{lem}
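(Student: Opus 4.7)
My plan is to construct the base $\mathcal{B}$ so that each basic set is an open subset of an image $\Phi(\gamma_0)$ of a fiber arc, restricted to a thin vertical strip; the key lemma then directly produces line segments whose projections cover the basic set with small error, and a light thickening turns them into open disks. Specifically, I will take
\[
\mathcal{B} = \Bigl\{ \operatorname{int}\Phi(\gamma_0) \cap \bigl((\alpha_1, \alpha_2) \times \R\bigr) \subset V \; : \; \gamma_0 \subset D \text{ a compact arc of } \Phi_{\alpha_*}^{-1}(y_*),\; \alpha_* \notin [\alpha_1, \alpha_2] \Bigr\}.
\]

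To verify $\mathcal{B}$ is a base of the topology of $V$, given $p = (\alpha_0, y_0) \in V$ and an open neighborhood $W \subset V$, I would construct a suitable $U \in \mathcal{B}$ with $p \in U \subset W$. Pick $x_0 \in D$ with $\Phi_{\alpha_0}(x_0) = p$ (possible since $\Phi(D) \supset V$, and in fact $x_0 \in \intdom \Phi_{\alpha_0}$ by the hypothesis on $D$). Choose $\alpha_* = \alpha_0 - \eta$ for small $\eta > 0$, let $y_* = \Phi_{\alpha_*}(x_0)$, and let $\gamma_0 = y_* - \widetilde\Gamma$ where $\widetilde\Gamma$ is a short subarc of $\Gamma$ around the parameter $t_0 = \alpha_* - x_{0,1}$ so that $x_0 \in \gamma_0 \subset D$. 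A direct computation identifies $p - y_* = (\eta, f(t_0+\eta) - f(t_0))$ with the pair $(s,t) = (t_0 + \eta, t_0)$, so by Lemma~\ref{differenceset}(a) we have $p - y_* \in \operatorname{int}(\Gamma - \widetilde\Gamma) \setminus \{0\}$ and hence $p \in \operatorname{int}\Phi(\gamma_0)$. Taking $[\alpha_1, \alpha_2] \ni \alpha_0$ a small interval with $\alpha_* < \alpha_1$, Lemma~\ref{differenceset}(c) bounds the vertical extent of $\Phi(\gamma_0) \cap ((\alpha_1, \alpha_2) \times \R)$ by $\lesssim \max(|\alpha_1 - \alpha_*|, |\alpha_2 - \alpha_*|)$, which we shrink to force $U \subset W$.

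To construct $E$ for a given $U \in \mathcal{B}$, let $[\alpha_{\min}, \alpha_{\max}]$ be the $\alpha$-range of $\Phi(\gamma_0)$. I would apply \Cref{lemma: keylemma} with $y = y_*$, $\alpha_0 = \alpha_*$, $\gamma = \gamma_0$, $A_\cov = [\alpha_1, \alpha_2]$, and $A_\sma = [\alpha_{\min}, \alpha_1 - \rho] \cup [\alpha_2 + \rho, \alpha_{\max}]$ for small $\rho > 0$. These are disjoint compact sets with $\alpha_* \in A_\sma$, and the hypothesis on $D$ gives $\gamma_0 \subset D \subset \intdom\Phi_\alpha$ for each $\alpha \in [\alpha_1, \alpha_2]$ (since such lines pass through $\overline V$). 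The key lemma produces a finite union $E'$ of line segments inside $\nbhd{\gamma_0}{\delta} \subset D$ with $\Phi_\alpha(E') \supset \Phi_\alpha(\gamma_0)$ on $A_\cov$ and $|\Phi_\alpha(E')| < \e'$ on $A_\sma$. I then cover $E'$ by finitely many open disks of sufficiently small radius $r$, obtaining $E \subset D$.

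Conclusion (a) is immediate. For (b), the chain $\Phi_\alpha(E) \supset \Phi_\alpha(E') \supset \Phi_\alpha(\gamma_0) \supset U_\alpha$ holds for each $\alpha \in [\alpha_1, \alpha_2]$, where $U_\alpha$ equals $\Phi_\alpha(\gamma_0)$ minus at most two boundary points by Lemma~\ref{differenceset}(a)-(b). For (c), Fubini decomposes $|\Phi(E)\setminus U|$ into contributions from $A_\cov$ ($\lesssim (\alpha_2-\alpha_1)(\delta+r)$ via Lipschitz continuity of $\Phi_\alpha$), from $A_\sma$ ($\lesssim |A_\sma|(\e'+r)$ via the key lemma), from the two gap intervals of total measure $2\rho$, and from the complement of $[\alpha_{\min}, \alpha_{\max}]$ (zero, after slightly enlarging $A_\sma$ to accommodate the thickening). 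Choosing $\e', \delta, r, \rho$ sufficiently small makes the total less than $\e$. The main obstacle is the base property in Step 1: the geometric computation identifying $p$ as an interior point of $\Phi(\gamma_0)$ relies crucially on Lemma~\ref{differenceset}(a) and the specific choice of $y_*$ as a projection of $x_0$ onto a neighboring vertical line.
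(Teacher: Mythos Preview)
Your proposal is correct and follows essentially the same route as the paper's proof: both define the base $\mathcal{B}$ to consist of sets of the form $\operatorname{int}\Phi(\gamma)\cap(I\times\R)$ for a compact fiber arc $\gamma\subset D$ and a vertical strip $I$ lying strictly to one side of the base parameter $\alpha_*$, verify the base property via \Cref{differenceset}, apply \Cref{lemma: keylemma} with $A_\cov=\overline{I}$ and $A_\sma$ a compact set on the complementary parameters, and then thicken the resulting segments to disks. The only cosmetic differences are that the paper insists $y\in V$ and takes $I=(y_1+\e',y_1+2\e')$ while you allow general $[\alpha_1,\alpha_2]\not\ni\alpha_*$, and the paper chooses $A_\sma=[-N,N]\setminus I'$ with $N$ large enough to absorb the $\delta$-thickening in one shot, whereas you take $A_\sma\subset[\alpha_{\min},\alpha_{\max}]$ and handle the overflow by ``slightly enlarging $A_\sma$''. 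One small point to tighten: your claim that $\{\alpha\}\times\R$ meets $\overline{V}$ for every $\alpha\in[\alpha_1,\alpha_2]$ relies on $U$ having nonempty slice at each $\alpha\in(\alpha_1,\alpha_2)$, which is automatic only when $[\alpha_1,\alpha_2]$ lies inside the $\alpha$-range of $\operatorname{int}\Phi(\gamma_0)$; you should add this (harmless) restriction to the definition of $\mathcal{B}$, as the paper implicitly does.
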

\begin{proof} 
If $y\in \R^2$, $\gamma$ is a compact and connected subset of $y-\Gamma$, and $I$ is an interval, we define
\[
U_{y,\gamma,I}=\operatorname{int}[(I\times \R)\cap \Phi(\gamma)].
\]
Let $\mathcal{B}$ be the set of all $U_{y,\gamma,I}$ generated by parameters satisfying:
\begin{itemize}
    \item $y = (y_1, y_2) \in V$,
    \item $\gamma$ is a compact and connected subset of $y-\Gamma$ consisting of more than one point, and $\gamma \subset D$,
    \item $I$ is an interval of the form $(y_1+\e',y_1+2\e')$, where $\e'>0$ is sufficiently small so that for all $\alpha \in I, (\{\alpha\} \times \R) \cap V \neq \varnothing$,
    \item $U_{y,\gamma,I}\subset V$.
\end{itemize}
We first show that $\mathcal{B}$ is a topological base for $V$. Let $y, \gamma, I$ be as above. Then $\gamma = y - \widetilde\Gamma$ where $\widetilde\Gamma \subset \Gamma$ is as in \Cref{differenceset}. Thus, $\Phi(\gamma)=y+\Gamma-\widetilde\Gamma$. By Lemma \ref{differenceset}, $U_{y,\gamma,I}$ is nonempty, provided $\e'$ is sufficiently small (depending only on $\gamma$); furthermore, $U_{y,\gamma,I}$ is contained in a rectangle of dimensions $\e'$ and $C\e'$, where $C$ only depends on $f$. Thus, to show that $\mathcal{B}$ is a base, it is enough to show that 
\begin{multline}
\label{eq:base condition}
\text{for any $z\in V$,}
\\
\text{there exist $y \in V, \gamma \subset D, I = (y_1 + \e', y_1 + 2\e')$ with $\e'$ arbitrarily small such that $z \in U_{y,\gamma,I}$.}
\end{multline}

To prove this, let $z \in V$. Then $z \in V \subset \Phi(D) = D + \Gamma$. Since $D$ is open, $D + \Gamma = D + \Gamma^\circ$, where $\Gamma^\circ$ is the relative interior of $\Gamma$. Thus, there exists $x \in D$ such that $z \in x + \Gamma^\circ$.
Given $\e'>0$, choose $y\in x+\Gamma^\circ$ such that $y\in V$ and such that $\e'<z_1-y_1<2\e'$. (This is possible if $\e'$ is sufficiently small.) %\alan{what if $z$ is the left endpoint of the curve $x+\Gamma$? then $y$ would not exist, since there is nothing to the left of $z$. We would need $\e' < y_1 - z_1 < 2\e'$ instead, I think?} {\color{red}Good point.  But, since $D$ and $V$ are open, $x$ can always be chosen so that this is not the case.  I think the simplest fix (if you agree that this is a fix) is to edit the previous line to point this out.} \alan{Thanks! I rewrote some of the parts above.} 
Since $x \in D \cap (y-\Gamma^\circ)$, we can let $\gamma \subset (y - \Gamma)$ be a compact and connected subset satisfying $x \in \gamma^\circ$ and $\gamma\subset D$.  Since $z \in x + \Gamma^\circ \subset \Gamma^\circ + \gamma^\circ$, \Cref{differenceset} implies that $z$ is an interior point of $(I\times \R)\cap \Phi(\gamma)$, so $z\in U_{y,\gamma,I}$.  This proves \eqref{eq:base condition} and shows that $\mathcal{B}$ is a base.

To construct the set $E$, fix $\e>0$ and fix $U=U_{y,\gamma,I}\in\mathcal{B}$.  Let $\alpha_0$ be the first coordinate of $y$, let $I=(\alpha_0+\e',\alpha_0+2\e')$, and let $I'=(\alpha_0+\e'-\e,\alpha_0+2\e'+\e)$ be the $\e$-neighborhood of $I$ (taking a smaller $\e$ if necessary, we may assume $\e<\e'$).  Let $\delta \in (0,1)$ to be determined later, and apply \Cref{lemma: keylemma} with $A_{\text{cover}}=\overline{I}$ and $A_{\text{small}}=[-N,N]\setminus I'$, and $\delta$, where $N$ is large enough so that $\Phi(\nbhd{\gamma}{1}) \subset [-N,N] \times \R$. \Cref{lemma: keylemma} gives us a set $\widetilde E\subset\nbhd{\gamma}{\delta}\subset D$ such that $\Phi_\alpha(\widetilde E \cap \dom\Phi_\alpha)\supset \Phi_\alpha(y-\Gamma)$ for $\alpha\in I$ and $|\Phi_\alpha(\widetilde E)| < \e$ for $\alpha\notin I'$ (in justifying the assumptions of Lemma \ref{lemma: keylemma}, we use the fact that $\gamma\subset D$, and that $D\subset \intdom \Phi_\alpha$ for any $\alpha$ such that $\{\alpha\}\times \R$ intersects $\overline{V}$, and hence for any $\alpha\in \overline{I}=A_\cov$). %\alan{the ``hence'' part doesn't seem correct. It's only true if $\e'$ is sufficiently small. This means we need to already restrict $\e'$ in the definition of $\cB$}
Note that the set $\widetilde{E}$ is a union of line segments; to get condition (a), we can find a set $E \supset \widetilde E$ which is a finite union of open disks such that the properties of the previous sentence hold with $E$ in place of $\widetilde E$. (Just make sure the disks are sufficiently small.)

To show property \ref{item:baselemma cover}, let $x\in U$.  Then $x=(\alpha,x_2)$ with $\alpha\in I\subset A_{\text{cover}}$, and $x\in\Phi_\alpha(\gamma)$ by definition of $U_{y,\gamma,I}$, so $x\in \Phi_\alpha(E)$ by Lemma \ref{lemma: keylemma}.  To prove property \ref{item:baselemma small}, we use the fact that $U \subset A_\cov \times \R$ and $\Phi(E) \subset [-N,N] \times \R$  to obtain
\begin{align}
\label{eq:Phi E minus U}
|\Phi(E)\setminus U|
&=
|(\Phi(E)\setminus U) \cap (A_\cov \times \R)|
+
|\Phi(E) \cap (A_\sma \times \R)|
+
|\Phi(E) \cap ((I'\setminus I) \times \R|
\end{align}
We now bound each of these three terms separately.

For the first term in \eqref{eq:Phi E minus U},  we use the fact that $E\subset \nbhd{\gamma}{\delta}$, the fact that $|U| = |\overline U|$ (which follows from \Cref{differenceset}(b)), and the definition of $U$ to obtain
\begin{align*}
|(\Phi(E)\setminus U) \cap (A_\cov \times \R)|
&\leq
|(\Phi(\nbhd{\gamma}{\delta})\setminus U) \cap (A_\cov \times \R)|
\\
&=
|(\Phi(\nbhd{\gamma}{\delta})\setminus \overline U) \cap (A_\cov \times \R)|
\\
&=
|(\Phi(\nbhd{\gamma}{\delta})\setminus \Phi(\gamma)) \cap (A_\cov \times \R)|.
\end{align*}
Furthermore, $\Phi(\nbhd{\gamma}{\delta}) \subset \nbhd{\Phi(\gamma)}{C\delta}$ for some $C$ depending only on the function $f$, and $\Phi(\gamma)$ is compact. Thus, by choosing $\delta$ small enough, this term is less than $\e$.

To bound the second term in \eqref{eq:Phi E minus U}, we observe that if $\alpha\in A_{\sma}$ then $|\Phi_\alpha(E \cap \dom\Phi_\alpha)|<\e$. Hence, by Fubini, \begin{align*}
|\Phi(E) \cap (A_\sma \times \R)| \leq |A_\sma| \e \leq 2N\e.    
\end{align*} 
(Recall that $N$ was chosen so that $\Phi(\nbhd{\gamma}{1}) \subset [-N,N] \times \R$. It is important that $N$ does not depend on $\e$.)

Finally, to bound the third term in \eqref{eq:Phi E minus U}, we observe that the set $\Phi(\gamma)=\gamma+\Gamma$ has the property that its intersection with every vertical line is an interval of length $\lesssim 1$. (The implied constant only depends on $f$.) Thus, the same is true for $\Phi(\nbhd{\gamma}{\delta})$. Hence, by Fubini,
\begin{align*}
|\Phi(E) \cap ((I'\setminus I) \times \R|
\leq
|\Phi(\nbhd{\gamma}{\delta}) \cap ((I'\setminus I) \times \R|
\leq
C|I'\setminus I|
=
2C\e.
\end{align*}

This shows that $|\Phi(E) \setminus U| \lesssim \e$, provided $\delta$ is chosen small enough.
\end{proof}

We have shown that systems of disks produce efficient coverings, and that we may construct a topological base with desired projection properties.  All that remains to prove our main theorem is to show that such a base can be used to construct a system of disks.  We prove this here.

\begin{proof}[Proof of Theorem \ref{maintheorem}]
Without loss of generality, we may assume the diameter of the set $W$ is at most $\frac{b-a}{10^{10}}$ (otherwise, the compact set $\newA$ could be partitioned into finitely many pieces, each of which satisfies that assumption, and by covering each piece efficiently we would get an efficient covering of the entire set).  By Lemma \ref{disks}, it suffices to construct a system of approximations $\{\newA_m\}_{m=1}^\infty$ and disks $\{\mathcal{D}_m\}_{m=1}^\infty$.  We do this as follows.  By standard properties of measure we may choose open sets $\newA_m\supset \newA$ such that $\newA_1 \supset \newA_2 \supset \newA_3 \supset \cdots$ and $|\newA_m\setminus \newA|\to 0$ as $m\to\infty$; we also assume each set $\newA_m$ has diameter at most $\frac{b-a}{10^9}$.  Let $T_{\text{out}}=(c,d)\times \R$ be a vertical strip of width $\frac{b-a}{10^8}$ containing $\newA$ and all $\newA_m$, and let $T_{\text{in}}=(d-b,c-a)\times \R$.  Notice that for any $\alpha\in (c,d)$, we have $T_{\text{in}}\subset \intdom \Phi_\alpha$.   We will recursively construct the families of disks $\{\cD_m\}_{m=0}^\infty$ so that for all $m \geq 1$:
\begin{enumerate}%[label=(\roman*)]\addtocounter{enumi}{1}
\item 
$\cD_m$ is a countable collection of open disks contained in $T_{\text{in}}$
\item
$\Phi(\cup \cD_m)\supset \newA_m$,
\item $|\Phi(\cup \cD_m)\setminus \newA_m| \leq 2^{-m}$,
\item For any $D\in\cD_{m-1}$, if $y\in \Phi(D)\cap \newA_m$ then there exists $D'\in\cD_m$ with $D'\subset D$ and $y\in\Phi(D')$.
\end{enumerate}
To start, let $\mathcal{D}_0$ be any countable collection of open disks covering $T_{\text{in}}$. Next, suppose $\mathcal{D}_1,\dots,\mathcal{D}_{m-1}$ are countable families of open disks which satisfy the properties above., and define $\mathcal{D}_m$ as follows. Let $\{D_r\}_{r\in\N}$ be an enumeration of the disks in $\mathcal{D}_{m-1}$. For each $r$, we apply \Cref{base} with $V = V_r:=\newA_m\cap \Phi(D_r)$ and $D=D_r$ (note that the hypotheses are satisfied because $V_r\subset \newA_m\subset T_{\text{out}}$ and $D_r\subset T_{\text{in}}$, and $\Phi(D_r)\supset V_r$) to get a countable collection of open sets $\{U_{r,s}\}_{s\in\N}$ such that
$V_r = \bigcup_s U_{r,s}$, and for each $s$, there exists a set $E_{r,s} \subset D_r$ which is a finite union of open disks such that $U_{r,s} \subset \Phi(E_{r,s})$ and $|\Phi(E_{r,s})\setminus U| < 2^{-(m+r+s)}$.

%be the set given by Lemma \ref{base} corresponding to $U_{r,s}$ and  $\e=2^{-(m+r+s)}$. Let $\mathcal{D}_m$ be the countable collection of open disks making up the sets $E_{r,s}$ (each of which is a finite union of open disks), so that $\cup \mathcal{D}_m = \bigcup_{r,s\in\N} E_{r,s}$.  Also note that by Lemma \ref{base}, we have $E_{r,s}\subset D_r$ for every $r,s$.

Now we verify properties (1)--(4) hold for $\cD_m$.  Property (1) holds by construction.  To prove property (2), we observe that $W_m \subset W_{m-1} \subset \Phi(\bigcup_r D_r)$, so
\begin{align*}
W_m 
=
\bigcup_{r} (W_m \cap \Phi(D_r))
=
\bigcup_r V_r
=
\bigcup_{r,s} U_{r,s}
\subset
\Phi\left(\bigcup_{r,s} E_{r,s}\right)
=
\Phi(\cup \cD_m).
\end{align*}
%
%let $y\in \newA_m$.  Since the sets $\{U_{r,s}\}$ cover $\newA_m$ \alan{why is this true? Don't we need $W_{m} \subset W_{m-1}$ here?}, we have $y\in U_{r,s}$ for some $r,s$, hence $y\in\Phi(E_{r,s})$ by Lemma , and therefore $y\in \Phi(\cup \mathcal{D}_m)$.  
To prove property (3), since $\cup \mathcal{D}_m=\bigcup_{r,s} E_{r,s}$, and for each fixed $r,s$ we have $U_{r,s}\subset \newA_m$, it follows that
\[
|\Phi(\cup \mathcal{D}_m)\setminus \newA_m|\leq \sum_{r,s} |\Phi(E_{r,s})\setminus U_{r,s}|\leq \sum_{r,s}2^{-(m+r+s)}=2^{-m}.
\] 
Finally, to prove property (4), let $D_r \in \cD_{m-1}$ and $y\in \Phi(D_r)\cap \newA_m$. Let $r,s$ be such that $y\in U_{r,s}$ (this can be done since $\{U_{r,s}\}_{s\in\N}$ is a topological base for $\Phi(D_r)\cap \newA_m$).  By \Cref{base}(a), it follows that $y\in \Phi(E_{r,s})$, and hence $y\in \Phi(D')$ for one of the disks $D'\in\mathcal{D}_m$ making up $E_{r,s}$.  Since $E_{r,s}\subset D_r$, we have $D'\subset D_r$.

Having verified (1)--(4), we can apply \Cref{disks} to finish the proof of the theorem.
\end{proof}

\appendix

\section{Appendix: Proof of parabola example in Remark \ref{general_classes}}
\label{parabola}
Let $\ell_c$ denote the vertical line through the point $(c,0)$, and observe that the intersection of this line with the graph of 
$P(a,b)=\{x\in\R^2:x_2=x_1^2+ax_1+b\}$
can be expressed as follows:
$$l_c \cap P(a,b) =  (c, c^2 + ac +b) = (c, c^2 + (a,b)\cdot(c,1)).$$

By the Prescribed Projection Theorem, there exists a planar set $E$  so that, for almost every $c\in \mathbb{R}$, 
$$E\cdot (c,1) \supset \left(\newA \cap l_c\right)_y  - c^2 \,\,\,\,\,\, \text{ and }  \,\,\,\,\,\, |E\cdot (c,1) |      = |(\newA \cap \ell_c)_y -  c^2|,$$
where we use $\left(\newA \cap l_c\right)_y $ to denote the orthogonal projection of  $\newA \cap l_c$ to the $y$-axis, 
%and 
%$$|E\cdot (c,1) |      = |(A \cap l_c)_y -  c^2|,$$
and $|\cdot|$ denotes the $1$-dimensional Lebesgue measure.  
Let $P = \bigcup_{(a,b) \in E} P(a,b)$.  Then $P$ covers $\newA$ and, by Fubini's theorem, the sets $P$ and $\newA$ have the same $2$-dimensional Lebesgue measure. 
\begin{figure}[hhh]
\label{pararaw2}
\centering
\includegraphics[scale=.3]{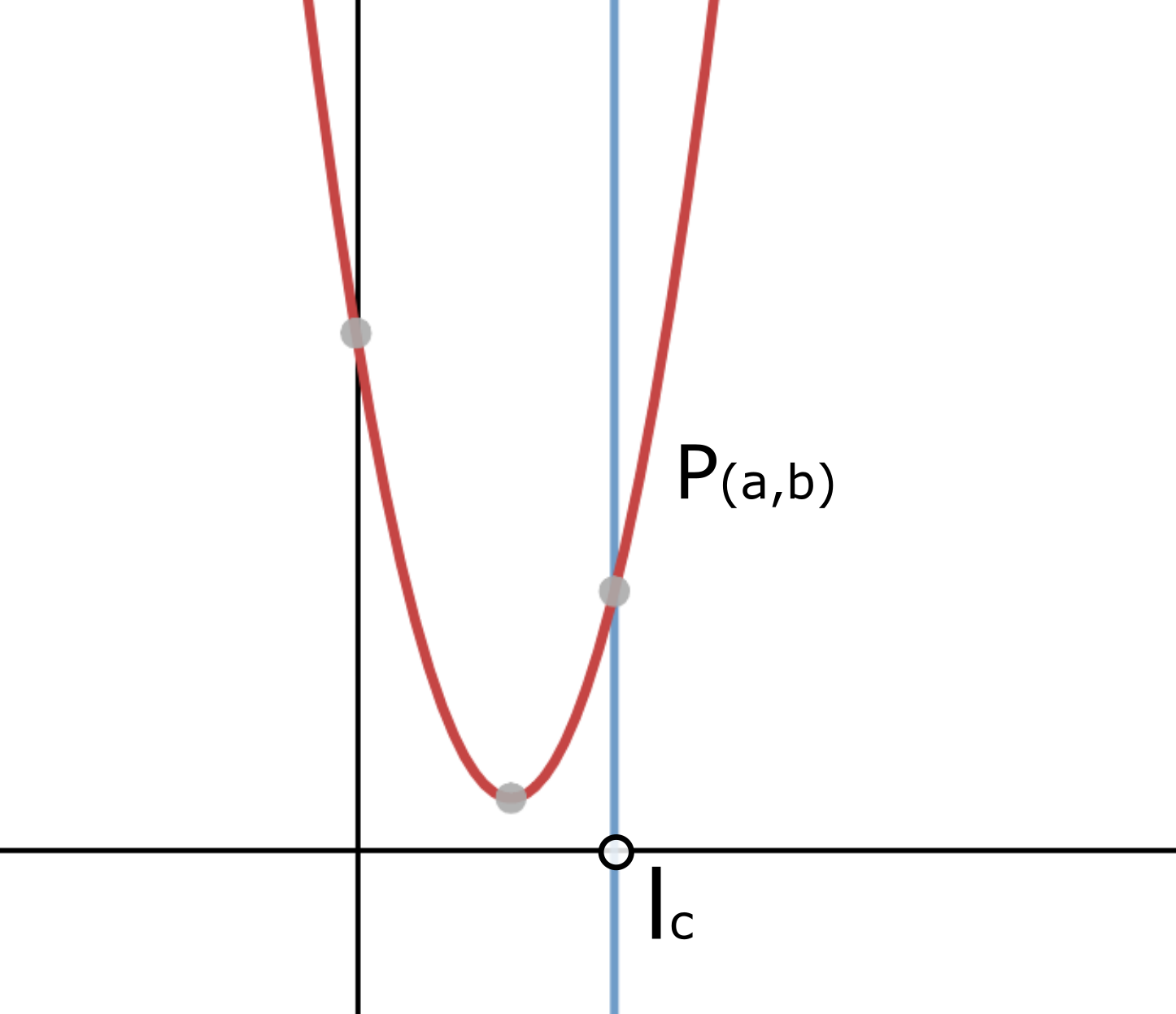}
\caption{Intersection of parabola with vertical line}
\end{figure}

\section{Appendix: Duality and the equivalence of Davies' theorem and the prescribed projection theorem}
\label{duality}
We demonstrate the equivalence of Davies' efficient covering theorem and the prescribed projection theorem in the linear setting.  
The basic idea is to identify lines with points in such a way that the projection of a set of points in a planar set $E$ is geometrically similar to to the intersection of the collection of lines parameterized by $E$ with some fixed line.  
We follow the notation and ideas outlined in \cite[Section 7.3]{Fal86_book} and fill in some details. 
%see the discussion in Section 7.3 and the proof of Theorem 7.11.  
\\

To this end, let $L(a,b)$ denote the set of points on the line $y= a+bx$ for $(a,b) \in \R^2$. 
For $E\subset \R^2$, let $L(E) \subset \R^2$ be the line set defined by 
\begin{equation}\label{line_set}
L(E) = \bigcup_{(a,b)\in E}L(a,b).
\end{equation}
For $c\in \R$, let $L_c$ denote the vertical line $x=c$.  
Observe that 
$$L(a,b) \cap L_c = (c, a+bc) =  (c, (a,b) \cdot (1,c) ), $$ 
and that 
$$(a,b) \cdot (1,c)/ \sqrt{1+c^2} =  \pi_\theta(a,b),$$
where $c = \tan{\theta}$ and $\pi_\theta$ denotes the orthogonal projection, $\pi_\theta(a,b) = (a,b) \cdot (\cos{\theta}, \sin{\theta})$. 
Now, setting $n_c = \sqrt{1+c^2}$, 
\begin{equation}\label{equiv}
L(E) \cap L_c =  (c, n_c \cdot \pi_\theta(E))  ).
\end{equation}
In other words, $L(E) \cap L_c$ is geometrically similar to $\pi_\theta(E)$.  
\\

%PRESCRIBED THEN DAVIES
To see that the prescribed projection theorem implies Davies' efficient covering theorem, 
let $A\subset \R^2$, and let $A_c$ denote the slice $A_c = A\cap L_c.$
Set 
\begin{equation}\label{Atheta}
A_\theta = \frac{1}{n_c} A_c,
\end{equation}
where $\tan\theta = c$. 
Assuming the prescribed projection theorem, choose $E\subset \R^2$ so that, for a.e. $\theta$, 
$$
\mathcal{L}^1(\pi_\theta(E)) = \mathcal{L}^1(  A_\theta)
$$
 and $A_\theta \subset \pi_\theta(E)$. 
Now, combining this with the definition of $A_\theta$ in \eqref{Atheta} and applying \eqref{equiv}, we have
$$\mathcal{H}^1(A_c) 
= n_c \cdot\mathcal{H}^1(A_\theta) 
= n_c\cdot \mathcal{H}^1(\pi_\theta(E)) 
= \mathcal{H}^1( L(E) \cap L_c).$$
An application of Fubini's theorem implies that $\mathcal{L}^2(A) = \mathcal{L}^2(L(E))$. 
\\

%DAVIES THEN PRESCRIBED
To see that Davies' efficient covering theorem implies the prescribed projection theorem, 
let $\{G_\theta\}$ denote a family of measurable subsets of $\R$ indexed by  $\theta \in [0,\pi)$, and consider the set 
$$G= \left\{ (c, y) : y \in \left( \sqrt{1+c^2} \right) G_\theta \text{ and } \tan\theta = c  \right\}.$$
%need to address measurability... I think we need more than just the G_theta measurable...add that in
Assuming Davies' efficient covering theorem, let $L(E)$ as in \eqref{line_set} denote a set consisting of lines in the plane so that 
$$\mathcal{L}^2(L(E)) = \mathcal{L}^2(G).$$
It follows again by Fubini that for almost every slice, that is for almost every $c \in \R$, 
$$\mathcal{L}^1(L(E) \cap L_c) = \mathcal{L}^1(G\cap L_c),$$
and so, by \eqref{equiv} and by the definition of $G$, 
$$\mathcal{L}^1(  \pi_\theta(E))) = \mathcal{L}^1(G_\theta).$$
Furthermore, since $L(E)$ covers $G$, then each slice $L(E)\cap L_c$ covers $G\cap L_c$, from which it follows from \eqref{equiv} and the definition of $G$ that $\pi_\theta(E)$ covers $G_\theta$. 
\\

% A higher dimensional analogue of Davies' theorem is discussed in \cite{Fal86_book}; see Theorem 7.13 and the surrounding comments. For a higher dimensional analogue of the prescribed projection problem in dimension $d=3$, also known as the digital sundial, see \cite{Fal03}. 
% \\
%%%%%%%%%%%%%%%%%%%%%%%%%%%%%%%%%%%%%%%%%%%%%%%%%

\bibliography{bib}
\bibliographystyle{abbrv}

% Alan Chang, Mathematics Department, Washington University at St. Louis

% \textit{email:} \textsf{alanchang@wustl.edu} \\

% Alex McDonald, Mathematics Department, The Ohio State University

% \textit{email:} \textsf{mcdonald.996@osu.edu} \\

% Krystal Taylor, Mathematics Department, The Ohio State University

% \textit{email:} \textsf{taylor2952@osu.edu}
\end{document}